\DeclareMathOperator*{\esssup}{ess\,sup}
\DeclareMathOperator{\vspan}{span}
\newcommand{\into}{\hookrightarrow}
\newenvironment{customthm}[1]
 {\innercustomthm}
 {\endinnercustomthm}
\begin{document}

\title{A spectral approach to quenched linear and higher-order response for partially hyperbolic dynamics}
\author{Harry Crimmins\footnote{School of Mathematics and Statistics, University of New South Wales, Sydney NSW 2052, Australia. Email: harry.crimmins@unsw.edu.au} and Yushi Nakano\footnote{Department of Mathematics, Tokai University, 4-1-1 Kitakaname, Hiratuka, Kanagawa, 259-1292, Japan. Email: yushi.nakano@tsc.u-tokai.ac.jp}}
\date{\today}
\maketitle

\pagestyle{myheadings}

\begin{abstract}
  For smooth random dynamical systems we consider the quenched linear and higher-order response of equivariant physical measures to perturbations of the random dynamics.
  We show that the spectral perturbation theory of Gou{\"e}zel, Keller, and Liverani \cite{keller1999stability, gouezel2006banach}, which has been applied to deterministic systems with great success, may be adapted to study random systems that possess good mixing properties.
  As a consequence, we obtain general linear and higher-order response results for random dynamical systems that we then apply to random Anosov diffeomorphisms and random U(1) extensions of expanding maps.  We emphasise that our results apply to random dynamical systems over a general ergodic base map, and are obtained without resorting to infinite dimensional multiplicative ergodic theory.
\end{abstract}

\section{Introduction}
In this paper we study quenched response theory for random dynamical systems (RDSs). The setup is as follows: take $M$ to be a $\mathcal{C}^\infty$ Riemannian manifold with Riemannian measure $m$,  $(\Omega, \mathcal{F}, \mathbb{P})$ to be a Lebesgue space, and for some $r \ge 1$ and each $\epsilon \in (-1,1)$ let $\mathcal{T}_\epsilon : \Omega \to \mathcal{C}^{r+1}(M,M)$ denote a one-parameter family of random maps with a `measurable' dependence on $\omega$. 
After fixing an invertible, $\mathbb{P}$-ergodic map $\sigma : \Omega \to \Omega$,  from each $\mathcal{T}_\epsilon$ we obtain random dynamical systems $(\mathcal{T}_\epsilon, \sigma )$ whose trajectories are random variables of the form
\begin{equation*}
  x, \mathcal{T}_{\epsilon,\omega} (x), \mathcal{T}^{(2)}_{\epsilon,\omega} (x), \dots, \mathcal{T}^{(n)}_{\epsilon,\omega} (x), \dots,
\end{equation*}
where $\mathcal{T}^{(n)}_{\epsilon,\omega}$ is short for the composition $\mathcal{T}_{\epsilon, \sigma^{n-1} \omega} \circ \cdots \circ \mathcal{T}_{\epsilon, \omega}$.
A family of probability measures $\{\mu_{\epsilon,\omega}\}_{\omega \in \Omega}$ on $M$ is said to be \emph{equivariant} for $( \mathcal{T}_{\epsilon}, \sigma)$ if $\mu_{\epsilon,\omega} \circ \mathcal{T}_{\epsilon,\omega}^{-1}= \mu_{\epsilon, \sigma \omega} $ for $\mathbb{P}$-a.e. $\omega$ (see Subsection \ref{ss:RDS} for a precise definition).  
When $\mathcal{T}_\epsilon$ possesses some (partial) hyperbolicity and good mixing properties one hopes to find a unique \emph{physical} equivariant family of probability measures\footnote{A $\mathcal{T}$-equivariant family of measure $\{\mu_{\omega}\}_{\omega \in \Omega}$ is physical with respect to $m$ if $n^{-1}\sum_{i=0}^{n-1} \delta_{\mathcal{T}_{\sigma^{-i}\omega^i}(x)} \to \mu$ for $x$ in a (possibly $\omega$-dependent) positive $m$-measure set with $\mathbb{P}$-probability 1.}, as such objects describe the $m$-a.e. realized statistical behaviour of the given RDS.
Quenched response theory is, broadly speaking, concerned with questions of the regularity of the map $\epsilon \mapsto \{\mu_{\epsilon, \omega}\}_{\omega \in \Omega}$ and, in particular, how this regularity is inherited from that of $\epsilon \mapsto \mathcal{T}_{\epsilon}$.  The one-parameter family of random maps $\epsilon \mapsto \mathcal{T}_{\epsilon}$ is said to exhibit quenched linear response if the measures $\{\mu_{\epsilon,\omega}\}_{\omega \in \Omega}$ vary differentiability with $\epsilon$ in an appropriate topology, with quenched higher-order (e.g. quadratic) response being defined analogously.

Linear and higher-order response theory for deterministic (i.e. non-random) systems is an established area of research, and there are a plethora of methods available for treating various systems (see \cite{baladi2014linear} for a good review).
Response theory has been developed for expanding maps in one and many dimensions \cite{baladi2014linear, baladi2018dynamical, sedro2018regularity}, intermittent systems \cite{baladi2016linear,bahsoun2016linear, korepanov2016linear}, Anosov diffeomorphisms \cite{ruelle1997differentiation, ruelle1998nonequilibrium, gouezel2006banach}, partially hyperbolic systems \cite{dolgopyat2004differentiability}, and piecewise expanding interval maps \cite{baladi2007susceptibility,baladi2008linear}.
The tools and techniques one may apply to deduce response results are likewise numerous: there are arguments based on structural stability \cite{ruelle1997differentiation},  standard pairs \cite{dolgopyat2004differentiability}, the implicit function theorem \cite{sedro2018regularity}, and on the spectral perturbation theory of Gou{\"e}zel, Keller and Liverani \cite{keller1999stability, gouezel2006banach} (and variants thereof, e.g. \cite{galatolo2020quadratic}).  

On the other hand,  the literature on quenched response theory for random dynamical systems is relatively small, and has only recently become an active research topic.  With a few notable exceptions, most results for random systems have focussed on the continuity of the equivariant random measure \cite{baladi1996random, baladi1997correlation, froyland2014stability,  gonzalez2018stability, nakano2016stochastic},  although some more generally apply to the continuity of the Oseledets splitting and Lyapunov exponents associated to the RDS's Perron-Frobenius operator cocycle \cite{bogenschutz2000stochastic, crimmins2019stability}.  
Quenched linear and higher-order response results are, to the best of our knowledge, limited to \cite{sedro2020regularity}, where quenched linear and higher-order response is proven for general RDSs of $\mathcal{C}^k$ uniformly expanding maps, and to \cite{dragivcevic2020statistical}, wherein quenched linear response is proven for RDSs of Anosov maps nearby a fixed Anosov map.
The relatively fewer results for response theory in the random case has been largely attributed to the difficulty in finding appropriate generalisations of the tools, techniques and constructions that have succeeded in the deterministic case. 
While the authors believe this sentiment is generally well-founded,  in this paper we find that for quenched linear and higher-order response problems it is possible to directly generalise an approach from the deterministic case to the random case with surprisingly little trouble.  
In particular,  by building on \cite{nakano2016stochastic} we will show that the application of Gou{\"e}zel--Keller--Liverani (GKL) spectral perturbation theory to response problems can be `lifted' to the random case, allowing one to deduce corresponding quenched response from deterministic response `for free'.

In the deterministic setting the application of GKL perturbation theory to response problems is part of the more general `functional analytic' approach to studying dynamical systems, which recasts the investigation of invariant measures and statistical properties of dynamical systems in functional analytic and operator theoretic terms.
The hero of this approach is the Perron-Frobenius operator, which for a non-singular\footnote{A map $T: M \to M$ is \emph{non-singular} with respect to $m$ if $m(A) = 0$ implies that $m(T^{-1}(A)) = 0$.} map $T \in \mathcal{C}^{r+1}(M, M)$ is denoted by $\LL_T$ and defined for $f \in L^1(m)$ by
\begin{equation*}
  (\LL_T f)(x) = \sum_{T(y) = x} \frac{f(y)}{\abs{\det D_y T}}.
\end{equation*}
The key observation is that the statistical properties of $T$ are often encoded in the spectral data of $\LL_T$ provided that one consider the operator on an appropriate Banach space \cite{baladi2000positive,baladi2018dynamical,liverani2004invariant, galatolo2015statistical}. 
Specifically,  one desires a Banach space for which $\LL_T$ is bounded and quasi-compact (in addition to some other benign conditions),  since in these cases a unique physical invariant measure $\mu_T$ for $T$ is often obtained as a fixed point of $\LL_T$. 
One may then attempt to answer response theory questions by studying the regularity of the map $T \mapsto \LL_T$ with a view towards deducing the regularity of $T \mapsto \mu_T$ via some spectral argument. The main obstruction to carrying out such a strategy is that $T \mapsto \LL_T$ is usually not continuous in the relevant operator norm, and so standard spectral perturbation theory (e.g. Kato \cite{kato1966perturbation}) cannot be applied.  
Instead, however,  one often has that $T \mapsto \LL_T$ is continuous (or $\mathcal{C}^k$) in some weaker topology,  and by applying GKL spectral perturbation theory it is then possible to deduce regularity results for $T \mapsto \mu_T$.

The main contribution of this paper is to show that the strategy detailed in the previous paragraph may still be applied in the random case to deduce quenched linear and higher-order response results.  More precisely,  with $\{(\mathcal{T}_\epsilon, \sigma)\}_{\epsilon \in (-1,1)}$ denoting the RDSs from earlier,  the main (psuedo) Theorem of this paper is the following (see Theorem \ref{thm:quenched_linear_response} for a precise statement and Section \ref{s:application} for our application to RDSs):
\begin{customthm}{A}\label{thm:A}
	Suppose that $(\mathcal{T}_{0}, \sigma)$ exhibits $\omega$-uniform exponential mixing on $M$,  and that for $\mathbb{P}$-a.e. $\omega$ the hypotheses of GKL perturbation theory are `uniformly' satisfied for the one-parameter families $\epsilon \mapsto \mathcal{T}_{\epsilon, \omega}$ as in the deterministic case.  
	Then whatever linear and higher-order response results that hold $\mathbb{P}$-a.e.  at $\epsilon = 0$  for the physical invariant probability measures of the one-parameter families $\epsilon \mapsto \mathcal{T}_{\epsilon, \omega}$ also hold in the quenched sense for the equivariant physical probability measures of the one-parameter family $ \epsilon \mapsto \{(\mathcal{T}_{\epsilon}, \sigma)\}_{\epsilon \in (-1,1)}$ of RDSs.
\end{customthm}
We note that despite the mixing requirement placed on $(\mathcal{T}_{0}, \sigma)$ in Theorem \ref{thm:A} we do not require that $\sigma$ exhibit any mixing behaviour, other than being ergodic.
The general strategy behind the proof of Theorem \ref{thm:A} is to consider for each $\epsilon \in (-1,1)$ a `lifted' operator obtained from the Perron-Frobenius operators $\{ \LL_{\mathcal{T}_{\epsilon, \omega}}\}_{\omega \in \Omega}$ associated to $\{ \mathcal{T}_{\epsilon, \omega}\}_{\omega \in \Omega}$.  
Then, using the fact that the hypotheses of the Gou{\"e}zel--Keller--Liverani Theorem (Theorem \ref{thm:GKL}) are satisfied `uniformly' for the Perron-Frobenius operators $\epsilon \mapsto \LL_{\mathcal{T}_{\epsilon, \omega}}$ and $\omega$ in some $\mathbb{P}$-full set, we deduce that the Gou{\"e}zel--Keller--Liverani Theorem may be applied to the lifted operator. By construction the fixed points of these lifted operators are exactly the equivariant physical probability measures of the corresponding RDS, and so we obtain the claimed linear and higher-order response via the conclusion of the Gou{\"e}zel--Keller--Liverani Theorem.
Using Theorem \ref{thm:A} we easily obtain new quenched linear and higher-order response results for random Anosov maps (Theorem \ref{thm:quenched_response_anosov}) and for random U(1) extensions of expanding maps (Theorem \ref{thm:1121}).  We note that our examples consist of random maps that are uniformly close to a fixed system.  
However,  this is not a strict requirement for the application of our theory and one could also consider `non-local' examples e.g.  it is clear that the arguments in Section \ref{s:application} are applicable to random systems consisting of arbitrary $\mathcal{C}^k$ expanding maps.

The structure of the paper is as follows. 
In Section \ref{sec:prelim} we introduce some conventions that are used throughout the paper and review some preliminary material related to random dynamical systems and the Gou{\"e}zel--Keller--Liverani Theorem. 
In Section \ref{sec:spectral_approach_stability} we consider random operator cocycles and their `lifts', and then prove our main abstract result, Theorem \ref{thm:quenched_linear_response},  which is a version of the Gou{\"e}zel--Keller--Liverani Theorem for the `lifts' of certain operator cocycles.
In Section \ref{s:application} we discuss how Theorem \ref{thm:quenched_linear_response} may be applied to study the quenched linear and higher-order response of general random $\mathcal{C}^{r+1}$ dynamical systems,  and then consider in detail the cases of random Anosov maps and random U(1) extensions of expanding maps. 
Lastly, Appendix \ref{a:pt} contains the proof of a technical lemma from Section \ref{s:application}.

\section{Preliminaries}\label{sec:prelim}

We adopt the following notational conventions:
\begin{enumerate}
  \item The symbol `$C$' will, unless otherwise stated, be used to indiscriminately refer to many constants, which are uniform (or almost surely uniform) and whose value may change between usages. If we wish to emphasise that $C$ depends on parameters $a_1, \dots, a_n$ we may write $C_{a_1, \dots, a_n}$ instead.
  \item If $X$ and $Y$ are topological vector spaces such that $X$ is continuously included into $Y$ then we will write $X \into Y$.
  \item If $X$ and $Y$ are Banach spaces then we denote the set of bounded, linear operators from $X$ to $Y$ by $L(X,Y)$.
  When $X=Y$, it is simply written as $L(X)$.
  \item When $X$ is a metric space we denote the Borel $\sigma$-algebra on $X$ by $\mathcal{B}_X$.
  \item If $A \in L(X)$ then we denote the spectrum of $A$ by $\sigma(A)$ and the spectral radius by $\rho(A)$. 
  We will frequently consider operators acting on a number of spaces simultaneously, and in such a situation we may denote $\sigma(A)$ and $\rho(A)$  by $\sigma(A | X)$ and $\rho(A | X)$, respectively,  for clarity.
\end{enumerate}

\subsection{Random dynamical systems}\label{ss:RDS}
Let  $(\Omega, \mathcal F , \mathbb{P})$ be a probability space and $\sigma:\Omega\to\Omega$  a \color{black} measurably invertible, \color{black} measure-preserving map.
For a measurable space $(\Sigma , \mathcal G)$,  we say that a measurable map $\Phi: \mathbb N_0 \times \Omega \times \Sigma \to \Sigma$ is a \emph{random dynamical system} (or, for short, an \emph{RDS}) on $\Sigma$ over the driving system $\sigma$ if
\[
\varphi ^{(0)} _\omega = \mathrm{id} _{\Sigma}  \quad\text{and}\quad \varphi ^{(n+m)} _ \omega  = \varphi ^{(n)}_{ \sigma ^m\omega }\circ  \varphi ^{(m)}_\omega
\]
for each $n, m \in \mathbb N_0$ and $\omega \in \Omega$, with the notation $\varphi ^{(n)}_\omega =\Phi (n,\omega ,\cdot )$ and $\sigma \omega =\sigma (\omega )$, where $\mathbb N_0 =\{0\} \cup \mathbb N$.
A standard reference for random dynamical systems is the monograph by Arnold \cite{Arnoldbook}. 
It is easy to check that 
\begin{equation}\label{eq:0220b2}
\varphi ^{(n)}_\omega = \varphi _{\sigma ^{n-1}\omega }\circ \varphi _{\sigma ^{n-2}\omega } \circ \cdots \circ \varphi _\omega 
\end{equation}
with the notation $\varphi _\omega = \Phi (1, \omega , \cdot )$.
Conversely, for each measurable map $\varphi : \Omega \times \Sigma\to \Sigma: (\omega , x) \mapsto \varphi _\omega (x)$, the measurable map $(n,\omega , x) \mapsto \varphi _\omega ^{(n)}(x)$ given by \eqref{eq:0220b2} is an RDS. 
We call it an \emph{RDS induced by $\varphi$ over $\sigma$}, and simply denote it by $(\varphi , \sigma )$.

It is easy to see that if we define a skew-product map $\Theta : \Omega \times \Sigma \to \Omega \times \Sigma$ by $\Theta (\omega , x)=(\sigma \omega , \varphi _\omega (x))$ for each  $(\omega , x)\in \Omega \times \Sigma$, then 
\[
\Theta ^n (\omega , x) =(\sigma ^n \omega , \varphi ^{(n)}_\omega (x)) \quad \text{for all $n\in \mathbb N_0$}.
\]
Furthermore, a probability measure $\mu$ on $(\Omega \times \Sigma , \mathcal F\times \mathcal G)$ is invariant for $\Theta$ (i.e.~$\mu \circ \Theta ^{-1}=\mu$) and $ \mu \circ \pi _\Omega ^{-1} = \mathbb P$, where $\pi _\Omega (\omega ,x)=\omega$ for each $(\omega , x)\in \Omega \times \Sigma$, if and only if there is a measurable family of probability measure $\{ \mu _\omega \} _{\omega \in \Omega}$ (i.e.~for each $A\in \mathcal G$, the map $\omega \mapsto \mu _\omega (A)$ is $(\mathcal F, \mathcal B_{\mathbb R})$-measurable) such that $\mu (A)= \int _\Omega \int _\Sigma 1_A(\omega ,x) \mu _\omega (\mathrm{d}x) \mathbb P(\mathrm{d}\omega )$ for each $A\in \mathcal F\times \mathcal G$, and that 
%
\begin{equation}\label{eq:1122a}
\mu_{\omega } \circ \varphi _{\omega }^{-1} = \mu_{\sigma \omega } \quad \text{ for almost every $\omega \in \Omega$}
\end{equation}
(cf.~\cite[Subsection 1.4]{Arnoldbook}).
Hence, 
 we say that 
 a measurable family of probability measure $\{ \mu _\omega \} _{\omega \in \Omega}$ is 
  \emph{equivariant}
   for $(\varphi , \sigma )$ if it satisfies \eqref{eq:1122a}.

\subsection{The Gou{\"e}zel--Keller--Liverani Theorem}
We recall the statement of the Gou{\"e}zel--Keller--Liverani theorem from \cite{baladi2018dynamical} (although we note that the result first appeared in full generality in \cite{gouezel2006banach,gouezelcorrigendum}, and in less generality in \cite{keller1999stability}).
Fix an integer $N \ge 1$ and let $E_j$, $j \in \{0, \dots, N\}$, be Banach spaces with $E_j \into E_{j-1}$ for each $j \in \{1, \dots, N\}$. 
For a family of linear operators $\{ A_\epsilon \}_{\epsilon \in [-1,1]}$ on these spaces we consider the following conditions:
\begin{enumerate}[label=\textnormal{(GKL\arabic*)}]
  \item \label{en:GKL1} For all $i \in \{1, \dots, N\}$ and $\abs{\epsilon} \le 1$ we have
  \begin{equation*}
    \norm{A_\epsilon}_{L(E_i)} \le C.
  \end{equation*}
  \item \label{en:GKL2} There exists $M > 0$ such that $\norm{A_\epsilon^n}_{L(E_0)} \le C M^n$ for all $\abs{\epsilon} \le 1$ and $n \in \N$.
  \item \label{en:GKL3} There exists $\alpha  < M$ such that for every  $\abs{\epsilon} \le 1$, $f \in E_1$ and $n \in \N$ we have
  \begin{equation*}
    \norm{A_\epsilon^n f}_{E_1} \le C\alpha ^n\norm{f}_{E_1} + C M^n \norm{f}_{E_0}.
  \end{equation*}
  \item \label{en:GKL4} For every $\abs{\epsilon} \leq 1$ we have
  \begin{equation*}
    \norm{A_\epsilon - A_0}_{L(E_N, E_{N-1})} \le C \abs{\epsilon}.
  \end{equation*}
\end{enumerate}
If $N \ge 2$ we have the following additional requirement:
\begin{enumerate}[label=\textnormal{(GKL5)}]
  \item \label{en:GKL5} There exist operators $Q_1, \dots,Q_{N-1}$ such that for all $j \in \{1, \dots, N-1\}$ and $i \in \{j, \dots, N\}$ we have
 \begin{equation}\label{eq:1203c}
    \norm{Q_j}_{L(E_i, E_{i-j})} \le C,
  \end{equation}
and that for all $\abs{\epsilon} \leq 1$ and $j \in \{2, \dots, N \}$ we have
 \begin{equation}\label{eq:1203d}
    \norm{A_\epsilon - A_0 - \sum_{k=1}^{j-1} \epsilon^k Q_k }_{L(E_N, E_{N-j})} \le C \abs{\epsilon}^j.
  \end{equation}
\end{enumerate}

\begin{theorem}[The Gou{\"e}zel--Keller--Liverani Theorem, {\cite[Theorem A.4]{baladi2018dynamical}}]\label{thm:GKL}
  Fix an integer $N \ge 1$ and let $E_j$, $j \in \{0, \dots, N\}$, be Banach spaces with $E_j \into E_{j-1}$ for each $j \in \{1, \dots, N\}$.
  Suppose that $\{ A_\epsilon \}_{\epsilon \in [-1,1]}$ satisfies \ref{en:GKL1}-\ref{en:GKL4} and if $N \ge 2$ then also \ref{en:GKL5}.
  For $z \notin \sigma(A_0 | E_N)$ set $R_0(z) = (z - A_0)^{-1}$ and define
  \begin{equation}\label{eq:GKL_1}
    S_\epsilon^{(N)}(z) = R_0(z) + \sum_{k=1}^{N-1} \epsilon^k  \sum_{j=1}^k \sum_{\substack{l_1 + \cdots + l_j = k \\ l_i \ge 1}} R_0(z) Q_{l_1} R_0(z) \cdots  R_0(z)Q_{l_j}R_0(z).
  \end{equation}
  In addition, for any $a > \alpha $ let
  \begin{equation*}
    \eta = \frac{\log(a/\alpha )}{\log(M/\alpha )},
  \end{equation*}
  and for $\delta > 0$ set
  \begin{equation*}
    \mathcal{V}_{\delta, a}(A_0) = \left\{z \in \C : \abs{z} \ge a \text{ and } \dist\left(z, \sigma\left(A_0 | E_j \right)\right) \ge \delta, \quad \forall j \in \{1, \dots, N\} \right\}.
  \end{equation*}
  There exist $\epsilon_0 > 0$ so that $\mathcal{V}_{\delta, a}(A_0) \cap \sigma(A_\epsilon | E_1 ) = \emptyset$ for every $\abs{\epsilon} \le \epsilon_0$ and so that, for each $z \in \mathcal{V}_{\delta, a}(A_0)$, we have
  \begin{equation*}
    \norm{(z- A_\epsilon)^{-1}}_{L(E_1)} \le C,
  \end{equation*}
  and
  \begin{equation*}
    \norm{(z- A_\epsilon)^{-1} - S_\epsilon^{(N)}(z)}_{L(E_N, E_0)} \le C \abs{\epsilon}^{N-1 + \eta}.
  \end{equation*}
\end{theorem}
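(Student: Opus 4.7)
The plan is to follow the Keller--Liverani \cite{keller1999stability} / Gou\"ezel \cite{gouezel2006banach} strategy: combine the uniform Lasota--Yorke inequality \ref{en:GKL3} with an iterated resolvent identity, and close the estimates via a quantitative interpolation between the norms on $E_0$ and $E_1$. Throughout the argument, the ladder $E_N \into \cdots \into E_0$ is used to convert ``smallness in $\epsilon$''—available only one level at a time, via \ref{en:GKL4}--\ref{en:GKL5}—into smallness in the weaker spaces.

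The first task is to show that $(z - A_\epsilon)^{-1}$ exists on $E_1$ and is uniformly bounded in $L(E_1)$ for $z \in \mathcal{V}_{\delta,a}(A_0)$ and small $|\epsilon|$. Following Keller--Liverani, I would construct a candidate inverse at the weaker level $E_0$ by Laplace-transforming in $z$ the telescoping identity
\begin{equation*}
A_\epsilon^n - A_0^n = \sum_{k=0}^{n-1} A_\epsilon^{k}(A_\epsilon - A_0)A_0^{n-1-k},
\end{equation*}
bounding the result on $E_0$ using \ref{en:GKL2}, \ref{en:GKL4}, and the assumed resolvent bound for $A_0$ at $z$, and then upgrading the estimate from $E_0$ to $E_1$ by invoking \ref{en:GKL3}, which plays the role of a Lasota--Yorke ``compactification'' that beats the target exponent $a$.

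Next, I would iterate the standard resolvent identity $N$ times to write
\begin{equation*}
(z-A_\epsilon)^{-1} = \sum_{k=0}^{N-1}(z-A_0)^{-1}\bigl[(A_\epsilon - A_0)(z-A_0)^{-1}\bigr]^k + (z-A_\epsilon)^{-1}\bigl[(A_\epsilon - A_0)(z-A_0)^{-1}\bigr]^N,
\end{equation*}
substitute $A_\epsilon - A_0 = \sum_{\ell=1}^{N-1}\epsilon^\ell Q_\ell + R_\epsilon^{(N)}$ from \ref{en:GKL5} into each factor, and collect terms of total degree strictly less than $N$ in $\epsilon$. The availability of $(z-A_0)^{-1}$ on each $E_j$, guaranteed by $\dist(z, \sigma(A_0|E_j)) \ge \delta$, makes this expansion legal; the collected polynomial part is exactly $S_\epsilon^{(N)}(z)$. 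What remains splits into higher-order cross terms (controlled by the second estimate in \ref{en:GKL5}) and the tail $(z-A_\epsilon)^{-1}\bigl[(A_\epsilon - A_0)(z-A_0)^{-1}\bigr]^N$. Because each factor $(A_\epsilon - A_0)(z-A_0)^{-1}$ drops one level on the ladder while contributing a power of $\epsilon$, the tail sends $E_N$ into $E_0$ with prefactor of order $|\epsilon|^{N-1}$ after using the uniform $L(E_1)$ bound from the first step.

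The hard part is improving this to $|\epsilon|^{N-1+\eta}$, and this is the genuine Keller--Liverani interpolation. I would truncate the formal Neumann expansion $(z-A_0)^{-1} = z^{-1}\sum_{n=0}^{n^\ast}(A_0/z)^n + z^{-n^\ast-2}A_0^{n^\ast+1}(z-A_0)^{-1}$ at a level $n^\ast = n^\ast(\epsilon)$ chosen to balance two competing bounds: the finite-sum part paired with $(A_\epsilon - A_0)$ is controlled in $L(E_N, E_{N-1})$ by $C|\epsilon|M^{n^\ast}$ via \ref{en:GKL2} and \ref{en:GKL4}, while the tail is controlled in $L(E_N, E_N)$ by $C(\alpha/a)^{n^\ast}$ by iterating \ref{en:GKL3}. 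Optimising $n^\ast$ so the two terms are comparable yields the exponent $\eta = \log(a/\alpha)/\log(M/\alpha)$. The main obstacle is keeping this interpolation uniform in $z \in \mathcal{V}_{\delta,a}(A_0)$ and in small $|\epsilon|$, and propagating the gain $|\epsilon|^\eta$ correctly through all $N$ rungs of the ladder so that the final exponent is $N-1+\eta$ rather than just $\eta$; once the interpolation is in hand, the identification of $S_\epsilon^{(N)}$ and the collection of the remainder terms reduce to routine bookkeeping.
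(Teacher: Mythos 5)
The paper does not prove this theorem; it is quoted as a black box from \cite[Theorem A.4]{baladi2018dynamical}, and the remark placed directly after the statement explicitly warns that the proofs in both \cite{gouezel2010characterization} and \cite{baladi2018dynamical} contain an error, referring to \cite{gouezelcorrigendum} and to \cite[Theorem 3.3]{gouezel2010characterization} for the corrected argument. So there is no internal proof to compare your sketch against, and reconstructing one was not expected of you.

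That said, your sketch correctly identifies the skeleton of the Keller--Liverani/Gou\"ezel argument: an $N$-fold iterated resolvent identity to peel off the polynomial $S_\epsilon^{(N)}(z)$, together with a truncated Neumann-type expansion at level $n^*=n^*(\epsilon)$ chosen to balance the polynomial growth $M^{n^*}|\epsilon|$ (from \ref{en:GKL2} and \ref{en:GKL4}) against the Lasota--Yorke contraction $(\alpha/a)^{n^*}$ (from iterating \ref{en:GKL3}), which is exactly what produces the exponent $\eta=\log(a/\alpha)/\log(M/\alpha)$. Two cautions. First, your displayed truncated expansion carries an index slip: multiplying by $(z-A_0)$ shows the remainder must be $z^{-n^*-1}A_0^{n^*+1}(z-A_0)^{-1}$ rather than $z^{-n^*-2}A_0^{n^*+1}(z-A_0)^{-1}$. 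Second, and more importantly, you close by calling the propagation of the gain $|\epsilon|^\eta$ across all $N$ rungs of the ladder ``routine bookkeeping.'' It is not: that is precisely the part of the argument where the published error lay, because the uniform bound $\|(z-A_\epsilon)^{-1}\|_{L(E_1)}\le C$ does not hand you control on the intermediate spaces $E_i$, $0<i<N$, for free, and getting the interpolated smallness to survive each application of $(A_\epsilon-A_0)(z-A_0)^{-1}$ without degrading the exponent is where a careful proof must spend most of its effort. Be prepared for the ``bookkeeping'' to be the bulk of the work.
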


\begin{remark}
 While the Gou{\"e}zel--Keller--Liverani Theorem as stated in Theorem \ref{thm:GKL} is true, there is an error in the proof of the result in both \cite{gouezel2010characterization} and \cite{baladi2018dynamical}. 
 For details of the error we refer the reader to \cite{gouezelcorrigendum},  and to the proof of \cite[Theorem 3.3]{gouezel2010characterization} for a corrected argument.
\end{remark}

\begin{remark}
	We emphasise that the inclusion $E_{j} \subset E_{j-1}$ need not be compact in Theorem \ref{thm:GKL}, which will be important in  our application in Section \ref{s:application}.	
\end{remark}

\section{A spectral approach to stability theory for operator cocycles}\label{sec:spectral_approach_stability}

Let $X$ be a Banach space, and $\mathcal{S}_{L(X)}$ denote the $\sigma$-algebra generated by the strong operator topology on $L(X)$.
   If $A : \Omega \to L(X)$ is $(\mathcal{F}, \mathcal{S}_{L(X)})$-measurable then we say it is \emph{strongly measurable}.
For an overview of the properties of strong measurable maps we refer the reader to \cite[Appendix A]{GTQuas1}.
The following lemma records the main properties of strongly measurable maps that we shall use.
\begin{lemma}[{\cite[Lemmas A.5 and A.6]{GTQuas1}}]
  \label{lemma:strong_measurability_summary}
 Suppose that $X$ is a separable Banach space and that $(\Omega, \mathcal{F}, \mathbb{P})$ is a Lebesgue space. Then
  \begin{enumerate}
    \item The set of strongly measurable maps is closed under (operator) composition i.e.~if $A_i : \Omega \to L(X)$, $i \in \{1, 2\}$, are strongly measurable then so to is $A_2 A_1 : \Omega \to L(X)$.
    \item If $A : \Omega \to L(X)$ is strongly measurable and $f : \Omega \to X$ is $(\mathcal{F}, \mathcal{B}_X)$-measurable then $\omega \mapsto A_\omega f_\omega$ is $(\mathcal{F}, \mathcal{B}_X)$-measurable too.
    \item If $A : \Omega \to L(X)$ is such that $\omega \mapsto A(\omega)f$ is $(\mathcal{F}, \mathcal{B}_X)$-measurable for every $f \in X$ then $A$ is strongly measurable.
  \end{enumerate}
\end{lemma}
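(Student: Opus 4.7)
The plan is to prove the three claims in the order \textnormal{(3)}, \textnormal{(2)}, \textnormal{(1)}, since each builds naturally on the previous. Throughout, the two standing hypotheses will be used in very specific places: separability of $X$ will enter only through the Pettis-type approximation in \textnormal{(2)}, and the Lebesgue space assumption on $\Omega$ gives the usual completeness-friendly manipulations with measurable sections (and is really only needed for dealing with null sets).

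First I would verify \textnormal{(3)}. For each $f \in X$ the evaluation map $e_f : L(X) \to X$, $e_f(T) = Tf$, is continuous with respect to the SOT by definition, hence Borel measurable into $(X, \mathcal{B}_X)$. Because the SOT on $L(X)$ is the initial topology making every $e_f$ continuous, the SOT-open sets (and consequently $\mathcal{S}_{L(X)}$) are generated by sets of the form $e_f^{-1}(U) = \{T \in L(X) : Tf \in U\}$ with $U \subseteq X$ open and $f \in X$. To check that $A : \Omega \to L(X)$ is $(\mathcal{F}, \mathcal{S}_{L(X)})$-measurable it therefore suffices to verify that $A^{-1}(e_f^{-1}(U)) = \{\omega : A(\omega) f \in U\}$ lies in $\mathcal{F}$ for every such $U$ and $f$, which is exactly the hypothesis of \textnormal{(3)}. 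The converse implication (strong measurability implies pointwise measurability) is immediate since each $e_f$ is SOT-continuous.

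Next, for \textnormal{(2)}, the plan is to approximate $f$ by simple measurable functions. For simple $f = \sum_{i=1}^n \mathbf{1}_{B_i} x_i$ with $B_i \in \mathcal{F}$ disjoint and $x_i \in X$, one has $A(\omega) f(\omega) = \sum_{i=1}^n \mathbf{1}_{B_i}(\omega) A(\omega) x_i$, and each summand is $(\mathcal{F}, \mathcal{B}_X)$-measurable by \textnormal{(3)} applied to the constant vector $x_i$. For a general $(\mathcal{F}, \mathcal{B}_X)$-measurable $f : \Omega \to X$, separability of $X$ together with the Pettis measurability theorem produces a sequence $f_n$ of simple measurable functions with $\norm{f_n(\omega) - f(\omega)}_X \to 0$ pointwise. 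Since each $A(\omega) \in L(X)$ is bounded we get $A(\omega) f_n(\omega) \to A(\omega) f(\omega)$ pointwise in $X$, and as $X$ is a metric space the pointwise limit of $(\mathcal{F}, \mathcal{B}_X)$-measurable maps is again measurable, yielding the claim.

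Finally, \textnormal{(1)} follows by combining \textnormal{(2)} and \textnormal{(3)}. Fix $f \in X$. By strong measurability of $A_1$ and \textnormal{(3)}, $\omega \mapsto A_1(\omega) f$ is $(\mathcal{F}, \mathcal{B}_X)$-measurable; applying \textnormal{(2)} with the strongly measurable $A_2$ and this function then gives measurability of $\omega \mapsto A_2(\omega)[A_1(\omega) f] = [A_2(\omega) A_1(\omega)] f$. Since $f \in X$ was arbitrary, one more invocation of \textnormal{(3)} shows that $\omega \mapsto A_2(\omega) A_1(\omega)$ is strongly measurable. I expect the only real friction in this plan to be in \textnormal{(2)}: one must be careful that the simple approximants are genuinely $\mathcal{F}$-measurable (not merely measurable for some enlargement) and that separability of $X$ is genuinely being used to produce them, since without separability one only obtains weak or scalar measurability and the pointwise convergence argument above can break down.
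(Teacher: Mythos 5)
The paper does not give its own argument for this lemma; it is cited directly from \cite[Lemmas A.5 and A.6]{GTQuas1}. So I will assess your proposal on its own merits.

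Your plan for items (2) and (1) is fine. In (2), you do not actually need the full Pettis measurability theorem: for a \emph{separable} metric target $X$, any $(\mathcal{F},\mathcal{B}_X)$-measurable $f$ is already an everywhere pointwise limit of $\mathcal{F}$-simple functions (take a countable dense set $\{x_k\}$ and let $f_n$ pick the first $x_k$, $k\le n$, within $1/n$ of $f(\omega)$). That gives everywhere (not just a.e.) convergence, which is what you need since you are composing with $A(\omega)$. Item (1) is then the clean composition of (2) and (3).

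The gap is in (3), and it is precisely where you claim separability is \emph{not} used. You write that because the SOT is the initial topology generated by the evaluation maps $e_f$, the $\sigma$-algebra $\mathcal{S}_{L(X)}$ is ``consequently'' generated by the sets $e_f^{-1}(U)$. This is a non-sequitur: a subbase of a topology need not generate the Borel $\sigma$-algebra as a $\sigma$-algebra, because an arbitrary open set is an arbitrary (not countable) union of basic open sets. To make (3) go through you must show that every SOT-open subset of $L(X)$ lies in $\sigma\bigl(\{e_f^{-1}(U): f\in X,\ U\subset X\text{ open}\}\bigr)$, and this is exactly where separability of $X$ enters. The standard route: write $L(X)=\bigcup_n B_n$ with $B_n=\{T:\|T\|\le n\}$; use a countable dense set $\{x_k\}$ in $X$ to express $B_n=\bigcap_k e_{x_k}^{-1}\bigl(\{g:\|g\|\le n\|x_k\|\}\bigr)$, placing $B_n$ in the evaluation $\sigma$-algebra; observe that the relative SOT on each $B_n$ is separable metrizable (again via $\{x_k\}$), so every relatively open subset of $B_n$ is a \emph{countable} union of traces of finite intersections of the $e_f^{-1}(U)$'s; then assemble. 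Without separability the argument genuinely breaks. So the assertion in your opening paragraph that separability is used \emph{only} in (2) is wrong, and your proof of (3) needs the above repair before it stands.
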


As a slight abuse of notation, for a given strongly measurable map $A: \Omega \to L(X)$,  we  denote an $(\mathcal{F}\times  \mathcal{B}_X, \mathcal{B}_X)$-measurable map $ (\omega , f) \mapsto A(\omega )f$  by $A$.
In light of the previous lemma, 
we may now formally define the main objects of study for this section.


\begin{definition}\label{defn:linear_rds}
  An RDS $(A, \sigma )$ on $X$ induced by $A$ is called an \emph{operator cocycle} 
   (or a \emph{linear RDS})
    if $(\Omega, \mathcal{F}, \mathbb{P})$ is a Lebesgue space, $\sigma : \Omega \to \Omega$ is an invertible, ergodic, $\mathbb P$-preserving map,  $X$ is a separable Banach space and $A : \Omega  \mapsto L(X)$ is strongly measurable.
  We say that $(A, \sigma )$ is bounded if $A \in L^\infty(\Omega, L(X))$.
\end{definition}

Throughout the rest of this paper, we assume that $(\Omega, \mathcal{F}, \mathbb{P})$ is a Lebesgue space and  $\sigma : \Omega \to \Omega$ is an invertible, ergodic, $\mathbb P$-preserving map.
An operator cocycle $(A, \sigma )$ is explicitly written as  a measurable map
\[
\mathbb N_0 \times \Omega \times X \to X: (n, \omega ,f )\mapsto A^{(n)}(\omega )f, \quad A^{(n)}(\omega):=A(\sigma^{n-1} \omega) \circ \cdots \circ A(\omega).
\]
We denote by $X^*$ the dual space of $X$.
\begin{definition}\label{dfn:Markov}
  Let $\xi \in X^*$ be non-zero. We say that $A \in L(X)$ is $\xi$-Markov if  $\xi (A f) = \xi (f)$ for every $f \in X$.
  We say that an operator cocycle $(A,\sigma)$ is $\xi$-Markov if $A$ is almost surely $\xi$-Markov.
\end{definition}
 Notice that our terminology in Definition \ref{dfn:Markov} may be slightly non-standard: in the literature  
  a linear operator $A: X\to X$  is called  Markov   if $X=L^1(S, \mu )$ for a probability space $(S, \mu)$ and $A$ is positive (i.e.~$A f \geq 0$ $\mu$-almost everywhere if $f\geq 0$ $\mu$-almost everywhere) 
 and $\xi$-Markov  with $\xi (f) = \intf _S f d\mu$ (cf.~\cite{lasota2013chaos}).
 See also Definition \ref{dfn:1208} for more general definition of positivity.
We do not add the positivity condition to Definition \ref{dfn:Markov}   to make  clear that the result in this section holds without it.

\begin{definition}
  Suppose that $(A,\sigma)$ is a $\xi$-Markov operator cocycle for some non-zero $\xi \in X^*$.
  We say that $(A,\sigma)$ is $\xi$-mixing with rate $\rho \in [0,1)$ if for every $n \in \N$ we have

  \begin{equation}\label{eq:uniform_mixing}
    \esssup_{\omega \in \Omega} \norm{\restr{A^{(n)}(\omega)}{\ker \xi}} \le C \rho ^n.
  \end{equation}
 We say that $(A,\sigma)$ is $\xi$-mixing if it is $\xi$-mixing with some rate $\rho \in [0,1)$.
\end{definition}

Fix a non-zero $\xi \in X^*$.   We define $\mathcal{X} \equiv  \mathcal{X}_\xi$ as 
\begin{equation*}
  \mathcal{X}  = \{ f \in L^\infty(\Omega, X) : \xi(f) \text{ is almost surely constant} \}.
\end{equation*}
Since $\mathcal{X}$ is a closed subspace of $L^\infty(\Omega, X)$, it is a Banach space with the usual norm.
If $(A,\sigma)$ is a bounded $\xi$-Markov operator cocycle then we define $\mathbb{A} : \mathcal{X} \to \mathcal{X}$ by
\begin{equation}\label{eq:1122c}
  (\mathbb{A} f)(\omega) = A(\sigma^{-1}(\omega)) f(\sigma^{-1}(\omega)).
\end{equation}
We say that $\mathbb{A}$ is the \emph{lift} of $(A,\sigma)$. That $\mathbb{A} \in L(\mathcal{X})$ follows from Lemma \ref{lemma:strong_measurability_summary} and the boundedness of $(A,\sigma)$
\color{black}(see \cite{nakano2016stochastic} for a possible extension of the lift to the case when $\sigma$ is not invertible).\color{black} 
The following proposition  is a natural generalisation of \cite[Proposition 2.3]{nakano2016stochastic}.

\begin{proposition}\label{prop:spectral_properties_lift}
  Fix non-zero $\xi \in X^*$. 
  If $(A,\sigma)$ is a bounded, $\xi$-Markov, $\xi$-mixing operator cocycle
   with rate $\rho \in [0,1)$
    then $1$ is a simple eigenvalue of $\mathbb{A}$ and $\sigma(\mathbb{A}| \mathcal{X} ) \setminus \{1\} \subseteq \{ z \in \C : \abs{z} \le \rho \}$.
  \begin{proof}
 For each $c\in \mathbb C$, let
    \begin{equation}\label{eq:spectral_properties_lift_1}
      \mathcal{X} _{c}= \{ f \in \mathcal{X}  : \xi(f) = c \text{ almost surely}\}.
    \end{equation}
    We note that $ \mathcal{X} _{c}$ is non-empty since $\xi$ is assumed to be non-zero.
    Since $(A,\sigma)$ is a $\xi$-Markov operator cocycle, the lift $\mathbb{A}$ preserves $ \mathcal{X} _{c}$.
    For any $f,g \in  \mathcal{X} _{c}$ one has $ f-g \in \mathcal{X}_0 $ (i.e.~$f-g \in \ker \xi$ almost surely), and so as $(A,\sigma)$ is $\xi$-mixing with rate $\rho$ we have for every $n \in \N$ and almost every $\omega \in \Omega$ that
    \begin{equation}\begin{split}
      \norm{(\mathbb{A}^n f)(\omega) - (\mathbb{A}^n g)(\omega)} &= \norm{A^{(n)}(\sigma^{-n}(\omega))(f_{\sigma^{-n}(\omega)} - g_{\sigma^{-n}(\omega)})} \\
      &\le C \rho ^n \norm{f_{\sigma^{-n}(\omega)} - g_{\sigma^{-n}(\omega)}}.
    \end{split}\end{equation}
    Upon taking the essential supremum we see that $\mathbb{A}^n$ is a contraction mapping on $ \mathcal{X} _{c}$ for large enough $n$. 
    Since $ \mathcal{X} _{c}$ is complete, it follows that $\mathbb{A}$ has a unique fixed point $v_c$ in $ \mathcal{X} _{c}$. 
Obviously $v_c =c v_1$, and thus  
$1$ is an eigenvalue of $\mathbb{A}$ on $\mathcal{X} $. 
Furthermore, $ \mathcal{X} = \vspan\{v _1\} \oplus \mathcal{X}_{0}$
     (indeed,
     for every $f \in \mathcal{X} $ we can write $f = f_1 + f_0$ where $f_1 = \xi(f) v_1 \in \vspan\{v_1\}$ and $f_0 = f - f_1 \in \mathcal{X}_{0}$, and note that $\vspan\{v _1\}$ and $\mathcal{X}_{0}$ are closed subspaces).

    Since $\mathbb{A}$ preserves both $\vspan\{v _1\}$ and  $\mathcal{X}_{0}$, we have
    \begin{equation*}
      \sigma(\mathbb{A} | \mathcal{X} ) = \sigma(\mathbb{A} |\vspan\{v _1\}) \sqcup \sigma(\mathbb{A} |\mathcal{X}_{0}).
    \end{equation*}
    It is clear that $\sigma(\mathbb{A} |\vspan\{v _1 \})$ only consists of 
      a simple eigenvalue $1$, while $\rho(\mathbb{A} |\mathcal{X}_{0}) \le \rho $ since $(A,\sigma)$ is $\xi$-mixing with rate $\rho$.
    Thus  $\sigma(\mathbb{A} | \mathcal{X}  ) \setminus \{1\} = \sigma(\mathbb{A} |\mathcal{X}_{0}) \subseteq \{ z \in \C : \abs{z} \le \rho \}$.
  \end{proof}
\end{proposition}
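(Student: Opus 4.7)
The plan is to decompose $\mathcal{X}$ according to the level sets of $\xi$ and use the mixing hypothesis on each level set to obtain both the eigenvalue at $1$ and the spectral bound on the complement. For each $c \in \mathbb{C}$, let $\mathcal{X}_c = \{ f \in \mathcal{X} : \xi(f) = c \text{ a.s.}\}$; note that $\mathcal{X}_c$ is non-empty because $\xi \neq 0$ (pick any $f_0 \in X$ with $\xi(f_0) = 1$ and take the constant map $c f_0$), and that the $\xi$-Markov property forces $\mathbb{A}(\mathcal{X}_c) \subseteq \mathcal{X}_c$ since $\xi((\mathbb{A}f)(\omega)) = \xi(A(\sigma^{-1}\omega)f(\sigma^{-1}\omega)) = \xi(f(\sigma^{-1}\omega))$.

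The heart of the argument is a Banach-fixed-point application on $\mathcal{X}_1$, which is a complete affine subspace of $L^\infty(\Omega,X)$. For any $f, g \in \mathcal{X}_1$ the difference $f - g$ lies almost surely in $\ker \xi$, so the $\xi$-mixing hypothesis gives
\begin{equation*}
\esssup_{\omega} \norm{(\mathbb{A}^n f)(\omega) - (\mathbb{A}^n g)(\omega)} = \esssup_{\omega} \norm{A^{(n)}(\sigma^{-n}\omega)(f(\sigma^{-n}\omega) - g(\sigma^{-n}\omega))} \le C \rho^n \norm{f - g}_{\mathcal{X}}.
\end{equation*}
For $n$ large this makes $\mathbb{A}^n$ a strict contraction on $\mathcal{X}_1$, yielding a unique fixed point $v_1 \in \mathcal{X}_1$ of $\mathbb{A}$ and hence the eigenvalue $1$.

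Next I would verify the splitting $\mathcal{X} = \mathrm{span}\{v_1\} \oplus \mathcal{X}_0$: given $f \in \mathcal{X}$, write $f = \xi(f) v_1 + (f - \xi(f) v_1)$, noting that both summands lie in $\mathcal{X}$ (the first in $\mathrm{span}\{v_1\}$ and the second in $\mathcal{X}_0$ by construction), and the sum is direct because $\xi(v_1) = 1 \neq 0$. Both subspaces are closed (as kernel and range of the bounded projection $f \mapsto \xi(f) v_1$) and both are $\mathbb{A}$-invariant: invariance of $\mathcal{X}_0$ is immediate from the $\xi$-Markov property. Simplicity of the eigenvalue $1$ then follows because any eigenvector for $1$ must, after normalising by $\xi$, lie either in $\mathrm{span}\{v_1\}$ or (if $\xi$-value is $0$) in $\mathcal{X}_0$; the mixing estimate above shows $\mathbb{A}$ has no fixed points on $\mathcal{X}_0$ beyond $0$.

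Finally, the spectral bound $\rho(\mathbb{A}|_{\mathcal{X}_0}) \le \rho$ is read directly from the same inequality applied with $g = 0$: $\norm{\mathbb{A}^n}_{L(\mathcal{X}_0)} \le C \rho^n$, so Gelfand's formula gives the claim, and by the direct-sum decomposition $\sigma(\mathbb{A}|\mathcal{X}) \setminus \{1\} \subseteq \{ |z| \le \rho\}$. The main technical point to watch is the measurability and $L^\infty$-boundedness of the candidate eigenvector $v_1$, which is automatic from the contraction mapping applied in $\mathcal{X}_1 \subseteq L^\infty(\Omega,X)$ together with Lemma \ref{lemma:strong_measurability_summary}; everything else is routine once the level-set decomposition is in place.
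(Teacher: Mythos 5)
Your proposal follows the paper's argument step for step: the level-set decomposition $\mathcal{X}_c$, the $\xi$-Markov invariance, the Banach fixed-point contraction on $\mathcal{X}_1$ via $\xi$-mixing, the direct sum $\mathcal{X} = \vspan\{v_1\} \oplus \mathcal{X}_0$, and the spectral-radius bound on $\mathcal{X}_0$ via Gelfand. The only stylistic difference is that the paper runs the contraction argument uniformly over all $\mathcal{X}_c$ and notes $v_c = cv_1$, while you work directly in $\mathcal{X}_1$; both establish the same facts, and your explicit remark that simplicity (algebraic, not merely geometric) is read off from the spectrum splitting together with $\rho(\mathbb{A}|_{\mathcal{X}_0}) \le \rho < 1$ is exactly the paper's reasoning.
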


\subsection{Main result}
Given a bounded, $\xi$-Markov, $\xi$-mixing operator cocycle $(A,\sigma)$ we are interested in question of stability (and differentiability) of the $\xi$-normalised fixed point $v$ of $\mathbb{A}$. To this end we formulate a number of conditions on operator cocycles that are reminiscent of the conditions of the Gou{\"e}zel--Keller--Liverani Theorem. 

Fix an integer $N \ge 1$ and let $E_j$, $j \in \{0, \dots, N\}$, be Banach spaces.
Let $\{(A_\epsilon , \sigma )\}_{\epsilon \in [-1,1]}$ be a family of operator cocycles on these spaces.
    \begin{enumerate}
    \item[(QR0)] \label{en:QR0} 
 $E_j \into E_{j-1}$ for each $j \in \{1, \dots, N\}$, $E_N$ is separable and  $\norm{\cdot}_{E_j}$-dense in $E_j$   for each $j \in \{0, \dots, N\}$ (in particular, $E_1$ is separable).
There exists non-zero  $\xi \in E_0^*$ such that $(A_\epsilon , \sigma )$ is $\xi$-Markov on $E_j$ for each $\vert \epsilon \vert \leq 1$, $j \in \{0, \ldots , N\}$ and that $(A_0 , \sigma )$ is $\xi$-mixing  on $E_j$ for each $j \in \{1, N\}$.
 \item[(QR1)]  \label{en:QR1} For all $i \in \{1, \dots, N\}$ and $\abs{\epsilon} \le 1$ we have $\esssup_{\omega} \norm{A_\epsilon(\omega)}_{L(E_i)} \le C$.
 \item[(QR2)]  \label{en:QR2} There exists $M > 0$ such that $  \esssup_\omega \norm{A_\epsilon^{(n)}(\omega)}_{L(E_0)} \le C M^n$
  for all $\abs{\epsilon} \le 1$ and $n \in \N$.
 \item[(QR3)]  \label{en:QR3} There exists $\alpha < M$ such that for every $f \in E_1$, $\abs{\epsilon} \le 1$ and $n \in \N$ we have
  \begin{equation*}
    \esssup_{\omega} \norm{A_\epsilon^{(n)}(\omega) f}_{E_1} \le C\alpha ^n\norm{f}_{E_1} + C M^n \norm{f}_{E_0}.
  \end{equation*}
 \item[(QR4)]  \label{en:QR4} For every $\abs{\epsilon} \leq 1$ we have
  \begin{equation*}
    \esssup_\omega \norm{A_\epsilon(\omega) - A_0(\omega)}_{L(E_N, E_{N-1})} \le C \abs{\epsilon}.
  \end{equation*}
  \end{enumerate}
  If $N \ge 2$ we have the following additional requirement:
  \begin{enumerate}[label=\textnormal{(QR5)}]
  \item \label{en:QR5} There exist operators $Q_1(\omega), \dots,Q_{N-1}(\omega)$ for each $\omega$ 
   such that for all $j \in \{1, \dots, N-1\}$ and $i \in \{j, \dots, N\}$ we have
  \begin{equation*}
       \esssup_{\omega} \norm{Q_j(\omega)}_{L(E_i, E_{i-j})} \le C,
  \end{equation*}
  and such that for all $\abs{\epsilon} \leq 1$ and $j \in \{2, \dots, N \}$ we have
  \begin{equation*}
    \esssup_{\omega}   
    \norm{A_\epsilon(\omega) - A_0(\omega) - \sum_{k=1}^{j-1} \epsilon^k Q_k(\omega) }_{L(E_N, E_{N-j})} \le C \abs{\epsilon}^j.
  \end{equation*}
  \end{enumerate}
We need not  assume 
 that 
 $Q_1, \ldots , Q_{N-1}$ are measurable\footnote{Recall that  the essential supremum 
 of a (not necessarily measurable)   complex-valued function $f$ on $\Omega$ is given as the infimum of $\sup _{\omega \in \Omega _0} \vert f(\omega )\vert $ over all $\mathbb P$-full measure sets $\Omega _0$.},
  which will make our application in Section \ref{s:application} simpler.

Our main theorem for this section is the following.

\begin{theorem}\label{thm:quenched_linear_response}
  Fix an integer $N \ge 1$, and let $E_j$, $j \in \{0, \dots, N\}$, be Banach spaces
    and $\{ (A_\epsilon ,\sigma ) \}_{\epsilon \in [-1,1]}$ a family of operator cocycles on these spaces. 
   Suppose that $\{ (A_\epsilon ,\sigma ) \}_{\epsilon \in [-1,1]}$ satisfies $\mathrm{(QR0)}$-$\mathrm{(QR4)}$
    and if $N \ge 2$ then also \ref{en:QR5}.
  Then there exists $\epsilon_0 \in (0, 1]$ such that one can find a unique $v_\epsilon \in L^\infty (\Omega , E_1)$ 
  such 
   that $A_\epsilon (\omega )v_\epsilon (\omega )=v_\epsilon (\sigma \omega )$ and $\xi (v_\epsilon (\omega ))=1$ for 
each $\epsilon \in (-\epsilon_0 , \epsilon_0)$ and almost every $\omega \in \Omega$, 
 that  
  \begin{equation*}
    \sup_{\abs{\epsilon} < \epsilon_0} \esssup_\omega \norm{v_\epsilon(\omega)}_{E_1} < \infty,
  \end{equation*}
 and 
  that $(A_\epsilon ,\sigma )$
  is $\xi$-mixing whenever $\abs{\epsilon} < \epsilon_0$.
  Moreover,   there exists $\{v_0^{(k)}\}_{k=1}^{N-1} \subset  L^\infty(\Omega, E_0) $ such that $\xi(v_0^{(k)}) = 0$ almost surely for each $k$, 
   and  that for every $\eta \in (0, \log (1/\alpha )/\log (M/\alpha ))$,
 we have
  \begin{equation}\label{eq:quenched_linear_response_1}
    v_\epsilon = v_0 + \sum_{k=1}^{N-1} \epsilon^k v_0^{(k)} + O_\eta(\epsilon^{N-1+\eta}),
  \end{equation}
  where $O_\eta(\epsilon^{N-1+\eta})$ is to be understood as an essentially bounded term in $E_0$ that possibly depends on $\eta$.
\end{theorem}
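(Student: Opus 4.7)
The plan is to reduce the theorem to an application of the Gouëzel--Keller--Liverani Theorem (Theorem \ref{thm:GKL}) for the family $\{\mathbb{A}_\epsilon\}_{|\epsilon|\le 1}$ of lifts acting on a scale of Banach spaces built from the $E_j$. For each $j \in \{0, \dots, N\}$ I set
\begin{equation*}
\mathcal{X}_j = \{f \in L^\infty(\Omega, E_j) : \xi(f) \text{ is a.s. constant}\},
\end{equation*}
with norm $\esssup_\omega \|f(\omega)\|_{E_j}$. Condition (QR0) ensures that the continuous inclusions $E_j \hookrightarrow E_{j-1}$ induce continuous inclusions $\mathcal{X}_j \hookrightarrow \mathcal{X}_{j-1}$, that the lift $\mathbb{A}_\epsilon$ defined as in (\ref{eq:1122c}) is a bounded $\xi$-Markov operator on each $\mathcal{X}_j$ for every $|\epsilon| \le 1$, and that $(A_0, \sigma)$ is $\xi$-mixing on both $E_1$ and $E_N$.

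The first step is to verify that $\{\mathbb{A}_\epsilon\}$ satisfies (GKL1)--(GKL5) on the scale $\mathcal{X}_N \hookrightarrow \cdots \hookrightarrow \mathcal{X}_0$. Since iterates lift as $(\mathbb{A}_\epsilon^n f)(\omega) = A_\epsilon^{(n)}(\sigma^{-n}\omega) f(\sigma^{-n}\omega)$ and differences lift analogously, conditions (GKL1)--(GKL4) transfer directly from (QR1)--(QR4) by taking the essential supremum in $\omega$ and using the $\sigma$-invariance of $\mathbb{P}$; separability of the $E_j$ (via (QR0)) is used to promote the almost-sure bounds for fixed $f$ in (QR3) to uniform-in-$f$ bounds on a single full-measure set. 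For (GKL5), the lifts $\mathbb{Q}_k$ are defined pointwise by $(\mathbb{Q}_k f)(\omega) = Q_k(\sigma^{-1}\omega) f(\sigma^{-1}\omega)$; the possible non-measurability of the $Q_k$ is handled by extracting measurable representatives from iterated finite differences of the strongly measurable family $\{A_\epsilon\}$, which retain the essential norm bounds, and this measurability bookkeeping is the main technical nuisance.

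Next, applying Proposition \ref{prop:spectral_properties_lift} to $\mathbb{A}_0$ on both $\mathcal{X}_1$ and $\mathcal{X}_N$ shows that $1$ is a simple, isolated eigenvalue of $\mathbb{A}_0$ on each space; uniqueness forces the two eigenvectors to coincide, giving $v_0 \in \mathcal{X}_N \subset \mathcal{X}_1$ with $\xi(v_0) = 1$. Fixing $a \in (\alpha, 1)$ and a small loop $\gamma$ around $1$ inside $\mathcal{V}_{\delta,a}(\mathbb{A}_0)$, Theorem \ref{thm:GKL} supplies $\epsilon_0 > 0$ such that for $|\epsilon| < \epsilon_0$ the resolvent $(z - \mathbb{A}_\epsilon)^{-1}$ is uniformly bounded on $\gamma$ in $L(\mathcal{X}_1)$, and
\begin{equation*}
\bigl\| (z - \mathbb{A}_\epsilon)^{-1} - S_\epsilon^{(N)}(z) \bigr\|_{L(\mathcal{X}_N, \mathcal{X}_0)} \le C |\epsilon|^{N-1+\eta}.
\end{equation*}
The spectral projector $\Pi_\epsilon = \frac{1}{2\pi i} \oint_\gamma (z - \mathbb{A}_\epsilon)^{-1}\,dz$ is then well-defined and uniformly bounded on $\mathcal{X}_1$. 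The $\xi$-Markov property yields the invariant splitting $\mathcal{X}_1 = \vspan\{v_\epsilon\} \oplus (\mathcal{X}_1)_0$, where $v_\epsilon$ is the unique $\xi$-normalized fixed point of $\mathbb{A}_\epsilon$, with uniqueness in $L^\infty(\Omega, E_1)$ following from the spectral gap. The identity $\xi((z - \mathbb{A}_\epsilon)^{-1} f) = (z-1)^{-1}\xi(f)$, immediate from $\xi \circ \mathbb{A}_\epsilon = \xi$ on $\mathcal{X}_1$, integrates to $\xi(\Pi_\epsilon v_0) = \xi(v_0) = 1$, forcing $v_\epsilon = \Pi_\epsilon v_0$ and thereby the uniform bound on $\|v_\epsilon\|_{\mathcal{X}_1}$.

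Writing $S_\epsilon^{(N)}(z) = (z - \mathbb{A}_0)^{-1} + \sum_{k=1}^{N-1} \epsilon^k T_k(z)$ with $T_k(z)$ read off from (\ref{eq:GKL_1}), and setting $v_0^{(k)} := \frac{1}{2\pi i}\oint_\gamma T_k(z) v_0\,dz \in \mathcal{X}_0$, integrating the GKL expansion around $\gamma$ applied to $v_0 \in \mathcal{X}_N$ yields (\ref{eq:quenched_linear_response_1}) with remainder bounded in the $\mathcal{X}_0 \subset L^\infty(\Omega, E_0)$ norm. Applying $\xi$ to both sides of (\ref{eq:quenched_linear_response_1}) and matching powers of $\epsilon$ (using $\xi(v_\epsilon) = 1$ for all $\epsilon$) forces $\xi(v_0^{(k)}) = 0$ almost surely. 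Finally, $\xi$-mixing of $(A_\epsilon, \sigma)$ for $|\epsilon| < \epsilon_0$ is read off the spectral gap of $\mathbb{A}_\epsilon$ on $(\mathcal{X}_1)_0$ provided by Theorem \ref{thm:GKL}: applying $\mathbb{A}_\epsilon^n$ to a constant section $\omega \mapsto f$ with $f \in E_1 \cap \ker \xi$ and taking essential suprema yields the required exponential decay of $\esssup_\omega \|A_\epsilon^{(n)}(\omega) f\|_{E_1}$.
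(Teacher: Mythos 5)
Your outline matches the paper's strategy closely: lift $\{A_\epsilon\}$ to operators $\mathbb{A}_\epsilon$ on the fibre-wise spaces $\mathcal{X}_j$ (the paper's $\mathcal{E}_j$), verify the GKL hypotheses, apply Theorem \ref{thm:GKL}, and extract the fixed-point expansion by integrating the resolvent around a small loop at $z = 1$. Your use of Proposition \ref{prop:spectral_properties_lift}, the contour-integral representation of $\Pi_\epsilon$, and the bound on $\|v_\epsilon\|_{\mathcal{X}_1}$ via $\xi$-invariance of the resolvent are all essentially the paper's arguments, and your shortcut for $\xi(v_0^{(k)}) = 0$ (applying $\xi$ to the expansion and matching powers of $\epsilon$) is a valid alternative to the paper's direct computation.

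The genuine divergence, and the genuine gap, is in (GKL5). You propose to ``extract measurable representatives'' of the $Q_k$ from iterated finite differences of $A_\epsilon$ and then lift these to $\mathbb{Q}_k$ mapping $\mathcal{X}_i \to \mathcal{X}_{i-k}$, dismissing this as bookkeeping. It is more than that: the finite differences converge to $Q_k(\omega)$ only in the weaker norm $L(E_N, E_{N-k-1})$ and only a.e., so one must fix a single full-measure set on which strong convergence holds for a countable $\|\cdot\|_{E_N}$-dense subset of $E_N$, extend by density and the uniform (QR5) bound to all of $E_N$, and then promote measurability from codomain $E_{N-k-1}$ up to each $E_{i-k}$ (which relies on the separable continuous inclusions $E_{i-k} \hookrightarrow E_{N-k-1}$). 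None of this is carried out. The paper takes a genuinely different route: it makes no attempt to regularise $Q_k$ and instead enlarges the function spaces to $\widetilde{\mathcal{E}}_j \subset I^\infty(\Omega, E_j)$ (bounded, not necessarily measurable functions modulo a.s. equality), in which the possibly non-measurable lifts $\mathbb{Q}_k$ act harmlessly. It then proves density lemmas (Lemmas \ref{lemma:completion_of_kernel} and \ref{lemma:measurable_completion}) and Proposition \ref{prop:measurable_derivatives} to verify (GKL5) there, and applies Theorem \ref{thm:GKL} on the $\widetilde{\mathcal{E}}_j$. The cost of that route, which your proposal never addresses, is that one must then show the resulting $v_0^{(k)} \in \widetilde{\mathcal{E}}_0$ actually lie in the measurable subspace $\mathcal{E}_0 \cong L^\infty(\Omega,E_0)$ as the theorem asserts; the paper does this by observing that the finite differences $\epsilon^{-k}\bigl(v_\epsilon - v_0 - \sum_{j<k}\epsilon^j v_0^{(j)}\bigr)$ are manifestly measurable and Cauchy in $\mathcal{E}_0$, with limit equal to the $\widetilde{\mathcal{E}}_0$-limit by the isometric inclusion $\mathcal{E}_0 \hookrightarrow \widetilde{\mathcal{E}}_0$. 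You should either carry out the measurable-representative construction in full or adopt the paper's $I^\infty$ detour together with its final measurability step.
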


\begin{remark}
  One is free to take $E_0 = E_1 = \cdots = E_N$ in Theorem \ref{thm:quenched_linear_response}, in which case the conditions $\mathrm{(QR0)}$-$\mathrm{(QR3)}$
   collapse into a single bound and $\mathrm{(QR4)}$-$\mathrm{(QR5)}$
    become standard operator norm inequalities.
  Hence in this simple case one recovers an expected Banach space perturbation result.
\end{remark}

\begin{remark}
  We note that Theorem \ref{thm:quenched_linear_response} has been proven before for the cases where $N = 1$ and $N = 2$ in \cite{froyland2014stability} and \cite{dragivcevic2020statistical}, respectively.
\end{remark}

\begin{remark}\label{remark:mixing}
  The claim that `there exists $\epsilon_0 \in (0, 1]$ such that $(A_\epsilon ,\sigma )$ is $\xi$-mixing whenever $\abs{\epsilon} < \epsilon_0$' is exactly the content of \cite[Proposition 1]{dragivcevic2020statistical} (as well as being an easy corollary of \cite[Proposition 3.11]{crimmins2019stability}).
  Upon examining these proofs it is clear that something slightly stronger is true: in the setting of Theorem \ref{thm:quenched_linear_response}, for every $\kappa \in (\rho ,1)$ there exists $\epsilon_\kappa > 0$ such that for all $\epsilon \in (-\epsilon_\kappa, \epsilon_\kappa)$ one has
  \begin{equation}\label{eq:uniform_mixing_perturb}
    \sup_{n \in \N} \kappa^{-n} \esssup_{\omega} \norm{\restr{A_\epsilon^n}{\ker \xi}}_{L(E_1)} \le C.
  \end{equation}
\end{remark}

%

\subsection{The proof of Theorem \ref{thm:quenched_linear_response}}

Before detailing the proof of Theorem \ref{thm:quenched_linear_response} we introduce some basic constructs.
For each $j \in \{0, \dots, N\}$ let
\begin{equation*}
  \mathcal{E}_j = \{ f \in L^\infty(\Omega, E_j) : \xi(f) \text{ is almost surely constant} \}.
\end{equation*}
Since $\xi \in E_j^*$ for each $j \in \{0, \dots, N\}$ we observe that each $\mathcal{E}_j$ is a closed subspace of $L^\infty(\Omega, E_j)$, and consequently a Banach space.
Moreover, we have $\mathcal{E}_j \into \mathcal{E}_{j-1}$ for $j \in \{1, \dots, N\}$.
For each $j \in \{1, \dots, N\}$ we may consider the lift $\mathbb{A}_{\epsilon,j}$ of the operator cocycle $( A_\epsilon , \sigma )$ on $E_j$, although we will always omit the subscript $j$ and just write $\mathbb{A}_{\epsilon}$, which will be of no consequence.

The beginning of the proof of Theorem \ref{thm:quenched_linear_response} is straightforward.
First we note that (QR1) implies that $(A_0,\sigma )$  is bounded on $E_j$ for $j \in \{1, N\}$ and so Proposition \ref{prop:spectral_properties_lift} may be applied 
  to characterise the spectrum of $\mathbb{A}_0$ on $\mathcal{E}_1$ and $\mathcal{E}_N$.
  Let $\rho$ be the rate of $\xi$-mixing in (QR0), that is, $(A_0, \sigma )$ is $\xi$-mixing on $E_j$ with rate $\rho$ for each $j\in \{ 1, N\}$.
Then it follows from (QR0) and Proposition \ref{prop:spectral_properties_lift} that 1 is a simple eigenvalue of $\mathbb{A}_0$ when considered on either space, and we have
\begin{equation}\label{eq:quenched_linear_response_2}
  \sigma(\mathbb{A}_0 | \mathcal{E}_j) \setminus \{1\} \subseteq \{ z \in \C : \abs{z} \le \rho \}
\end{equation}
for $j \in \{1, N\}$.
For each $j \in \{1, \dots, N\}$ one may use basic functional analysis and the fact that $\mathcal{E}_N \into \mathcal{E}_j \into \mathcal{E}_1$ to deduce that 1 is a simple eigenvalue of $\mathbb{A}_0 : \mathcal{E}_j \to \mathcal{E}_j$ and that \eqref{eq:quenched_linear_response_2} holds.
As a consequence we find a $\xi$-normalised $v_0 \in \mathcal{E}_N$ that is the unique $\xi$-normalised fixed point of $\mathbb A_0 : \mathcal E_j \to \mathcal E_j$ for each $j \in \{1, \dots, N\}$.

We now turn to constructing the $\xi$-normalised fixed points of $\mathbb  A_\epsilon : \mathcal E_1 \to \mathcal E_1$.
By Remark \ref{remark:mixing} we may find some $\kappa \in (\rho ,1)$ and $\epsilon_0 > 0$ such that $(A_\epsilon ,\sigma )$ is $\xi$-mixing on $E_1$ with rate $\kappa$ for every $\epsilon \in (-\epsilon_0, \epsilon_0)$.
We note that each $( A_\epsilon ,\sigma )$ is bounded on $E_1$ due to (QR1), and so by Proposition \ref{prop:spectral_properties_lift} we find that $1$ is a simple eigenvalue of $\mathbb{A}_\epsilon : \mathcal{E}_1 \to \mathcal{E}_1$, and that $\sigma(\mathbb{A}_\epsilon | \mathcal{E}_1) \setminus \{1\} \subseteq \{z \in \C : \abs{z} \le \kappa\}$.
Thus $\mathbb  A_\epsilon : \mathcal E_1 \to \mathcal E_1$ has a unique $\xi$-normalised fixed point $v_\epsilon \in \mathcal{E}_1$ for each $\epsilon \in (-\epsilon_0, \epsilon_0)$ by Proposition \ref{prop:spectral_properties_lift}.
Moreover, by virtue of the uniform bound \eqref{eq:uniform_mixing_perturb} we may strengthen the conclusion of Proposition \ref{prop:spectral_properties_lift}: for all $n$ sufficiently large the family of maps $\{\mathbb{A}_\epsilon^n\}_{\abs{\epsilon} < \epsilon_0}$ uniformly contract the set $\mathcal{X} _1$ from \eqref{eq:spectral_properties_lift_1}.
Hence we deduce the bound
\begin{equation*}
  \sup_{\abs{\epsilon} < \epsilon_0} \esssup_\omega \norm{v_\epsilon(\omega)}_{E_1} < \infty,
\end{equation*}
as required for Theorem \ref{thm:quenched_linear_response}.

Thus to complete the proof of Theorem \ref{thm:quenched_linear_response} it suffices to prove \eqref{eq:quenched_linear_response_1}.
It may be easily seen from the proof of Proposition \ref{prop:spectral_properties_lift} that the eigenprojection $\Pi_\epsilon \in L(\mathcal{E}_1)$ of $\mathbb{A}_\epsilon : \mathcal{E}_1 \to \mathcal{E}_1$ onto the eigenspace for $1$ is defined for $f \in \mathcal{E}_1$ and $\epsilon \in (-\epsilon_0, \epsilon_0)$ by
\begin{equation*}
  \Pi_\epsilon (f) = \xi(f) v_\epsilon.
\end{equation*}
Since each $v_\epsilon$ is $\xi$-normalised, we consequently have
\begin{equation}\label{eq:quenched_linear_response_3}
  v_\epsilon = v_0 + (\Pi_\epsilon- \Pi_0)v_0.
\end{equation}
If $\delta \in (0, 1-\kappa )$ then $D_\delta = \{ z \in \C : \abs{z -1} = \delta\} \subseteq \C \setminus \sigma(\mathbb{A}_\epsilon | \mathcal{E}_1)$ for every $\epsilon \in (-\epsilon_0, \epsilon_0)$. Thus
\begin{equation}\label{eq:quenched_linear_response_4}
  \Pi_\epsilon = \intf_{D_\delta} (z - \mathbb{A}_\epsilon)^{-1} dz.
\end{equation}
Applying \eqref{eq:quenched_linear_response_4} to \eqref{eq:quenched_linear_response_3} yields
\begin{equation}\label{eq:quenched_linear_response_5}
  v_\epsilon = v_0 + \intf_{D_\delta} \left((z - \mathbb{A}_\epsilon)^{-1} - (z - \mathbb{A}_0)^{-1}\right)v_0 dz.
\end{equation}
The idea is to apply the Gou{\"e}zel--Keller--Liverani Theorem to the lifts $\{\mathbb{A}_{\epsilon}\}_{\epsilon \in [-1,1]}$ with Banach spaces $\{\mathcal{E}_j\}_{0 \le j \le N}$, and then develop a Taylor expansion in \eqref{eq:quenched_linear_response_5}.
The hypothesis that (QR1)-(QR4) holds for $\{ (A_\epsilon ,\sigma )\}_{\epsilon \in [-1, 1]}$ with constants almost surely independent of $\omega$ readily implies that the lifts $\{\mathbb{A}_{\epsilon}\}_{\epsilon \in [-1,1]}$ satisfy \ref{en:GKL1}-\ref{en:GKL4} for the spaces $\{\mathcal{E}_j\}_{0 \le j \le N}$.
Hence, in the case where $N = 1$ we may apply Theorem \ref{thm:GKL} to the lifts $\{\mathbb A_\epsilon \}_{\epsilon \in [-1, 1]}$.

However, the case where $N \ge 2$ is more delicate because the measurability of  $Q_j$ is not required in Theorem \ref{thm:quenched_linear_response}.
Thus, we introduce the following   functional space instead of $L^\infty (\Omega, E_j)$, where the objects are defined up to almost everywhere equality but we loosen the measurability requirement.
For each $j\in \{0,\ldots ,N\}$, let 
$B(\Omega , E_j)$ denote the set of (not necessarily measurable) bounded $E_j$-valued functions on $\Omega$ equipped with the uniform norm $\Vert f\Vert _{B(\Omega , E_j)} = \sup _{\omega \in \Omega} \Vert f(\omega )\Vert _{E_j}$ for each $f\in B(\Omega , E_j)$, and let
\[
\mathcal{N}_j = \left\{ f \in B(\Omega, E_j) : \text{
 $ f= 0 $ almost surely}
 \right\}.
\]
Then it is straightforward to see that $\mathcal{N}_j$ is a closed subspace of $B(\Omega, E_j)$,
 and thus we can form a quotient space 
 \[
  I^\infty( \Omega , E_j) = B(\Omega, E_j) / \mathcal{N}_j.
\]
Since $B(\Omega, E_j)$ is a Banach space, $I^\infty( \Omega , E_j) $ is also  a Banach space with respect to the quotient norm 
\[
\Vert f \Vert _{I^\infty( \Omega , E_j) } = \inf _{h\in \mathcal N_j} \Vert g - h \Vert _{B(\Omega , E_j )} , \quad f\in I^\infty( \Omega , E_j),
\]
where $g$ is a representative of $f$.
As for $L^\infty( \Omega , E_j)$,   under the identification of each element of $I^\infty( \Omega , E_j)$ with its representative,  we have
\[
\Vert f \Vert _{I^\infty( \Omega , E_j) } = \esssup _{\omega} \Vert f(\omega )\Vert _{E_j}.
\]
Thus under the identification we have $\Vert f\Vert _{L^\infty(\Omega, E_j)}  = \Vert f\Vert _{I^\infty(\Omega, E_j)}$ for each $f\in L^\infty(\Omega, E_j)$, that is, $L^\infty(\Omega, E_j)$ isometrically injects into $I^\infty(\Omega, E_j)$.
Finally let 
\begin{equation}\label{eq:gx1}
  \widetilde{\mathcal{E}}_j = \left\{ f \in I^\infty(\Omega, E_j) : \xi(f) \text{ is almost surely constant}\right\}.
\end{equation}
We simply write $\Vert f\Vert _{  \widetilde{\mathcal{E}}_j }$ for $\Vert f\Vert _{I^\infty (\Omega , E_j)}$ if  $f \in  \widetilde{\mathcal{E}}_j $. 
Repeating the previous argument, one can show  \ref{en:GKL1}-\ref{en:GKL4} for  the lifts $\{\mathbb{A}_{\epsilon}\}_{\epsilon \in [-1,1]}$ with respect to the spaces $\{  \widetilde{\mathcal{E}}_j\}_{0 \le j \le N}$.

We need some auxiliary lemmas. 
The first lemma is a standard exercise in functional analysis, which will allow us in Lemma \ref{lemma:measurable_completion} to characterise the relationship between the spaces $\widetilde{\mathcal{E}}_j$, $j \in \{0, \dots, N\}$.

\begin{lemma}\label{lemma:completion_of_kernel}
  Assume the setting of Theorem \ref{thm:quenched_linear_response}. For each $K \in \C$ and $j \in \{0, \dots, N\}$ let $U_{K,j} = \{ g \in E_j : \xi(g) = K \}$.
  Then there is a countable subset of $U_{K,j} \cap E_N$ that is $\norm{\cdot}_j$-dense in $U_{K,j}$.
  \end{lemma}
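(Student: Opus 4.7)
My plan is to construct the countable dense subset explicitly from a countable dense subset of $E_N$ together with a single fixed vector that handles the level $\xi = K$. The essential inputs are the density and separability claims in (QR0) and the non-triviality of $\xi \in E_0^*$.

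First I would observe that $\xi|_{E_N}$ is non-zero. Indeed, by (QR0), $E_N$ is $\norm{\cdot}_{E_0}$-dense in $E_0$, so if $\xi$ vanished on $E_N$ then $\xi \in E_0^*$ would vanish on all of $E_0$, contradicting the non-triviality clause in (QR0). Fix once and for all a vector $e^* \in E_N$ with $\xi(e^*) = 1$, and use separability of $E_N$ to fix a countable $\norm{\cdot}_{E_N}$-dense subset $\{e_k\}_{k \in \N} \subset E_N$.

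Given $K \in \C$ and $j \in \{0,\dots,N\}$, set $g_0 = K e^* \in U_{K,j} \cap E_N$ and define
\[
  f_k = g_0 + e_k - \xi(e_k)\, e^*, \qquad k \in \N.
\]
Each $f_k$ lies in $U_{K,j} \cap E_N$. Translating by $g_0$, the task of showing that $\{f_k\}$ is $\norm{\cdot}_{E_j}$-dense in $U_{K,j}$ reduces to showing that $\{e_k - \xi(e_k) e^*\}_{k \in \N}$ is $\norm{\cdot}_{E_j}$-dense in the closed subspace $U_{0,j} = \ker \xi \cap E_j$ of $E_j$.

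I would verify this reduced density claim in two stages. Fix $h \in U_{0,j}$; by (QR0) pick $g_n \in E_N$ with $g_n \to h$ in $E_j$. Since $E_j \into E_0$ and $\xi \in E_0^*$, continuity yields $\xi(g_n) \to \xi(h) = 0$, so $g_n - \xi(g_n) e^* \to h$ in $E_j$, showing that $\{g - \xi(g) e^* : g \in E_N\}$ is $\norm{\cdot}_{E_j}$-dense in $U_{0,j}$. Now $g \mapsto g - \xi(g) e^*$ is a continuous self-map of $E_N$ (since $\xi$ is continuous on $E_N$ via $E_N \into E_0$), so $\norm{\cdot}_{E_N}$-density of $\{e_k\}$ in $E_N$ forces $\{e_k - \xi(e_k) e^*\}$ to be $\norm{\cdot}_{E_N}$-dense in $\{g - \xi(g) e^* : g \in E_N\}$, and this upgrades to $\norm{\cdot}_{E_j}$-density via the continuous inclusion $E_N \into E_j$. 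Composing the two approximations finishes the argument.

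I do not anticipate a serious obstacle here; the content is bookkeeping of norms. The only point that requires care is to ensure that the correction $g \mapsto g - \xi(g) e^*$ interacts properly with both $\norm{\cdot}_{E_N}$ and $\norm{\cdot}_{E_j}$, which is automatic once one uses the continuous inclusions $E_N \into E_j \into E_0$ together with $\xi \in E_0^*$.
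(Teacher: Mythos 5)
Your proof is correct and follows essentially the same route as the paper's: reduce to the $K=0$ case by translating by a fixed $\xi$-normalised vector $e^* \in E_N$, then build the countable set by projecting $E_N$-dense points onto $\ker\xi$ via $g \mapsto g - \xi(g)e^*$ and combine with density of $E_N$ in $E_j$. The only cosmetic difference is that the paper takes a countable dense subset of the separable closed subspace $U_{0,N}$ directly, whereas you take a countable dense subset of all of $E_N$ and push it forward under the continuous projection; this changes nothing substantive.
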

  \begin{proof}
    Fix $j$. We begin by reducing to the case where $K = 0$. Since $\xi$ is non-zero on $E_0$ and $E_N$ is $\norm{\cdot}_{E_0}$-dense in $E_0$ there exists some $g \in E_N$ such that $\xi(g) \ne 0$. Without loss of generality we may assume that $\xi(g) = 1$, in which case $U_{K,j} = Kg + U_{0,j}$.
    Thus to obtain a countable, $\norm{\cdot}_j$-dense subset of $U_{K,j}$ inside $U_{K,j} \cap E_N$ it suffices to do so for the case where $K = 0$ and then translate by $Kg$.
    We will now prove the lemma for $K = 0$.
    Since $E_N$ is separable and $\xi \in E_N^*$ there is a countable, $\norm{\cdot}_{E_N}$-dense subset $G$ of $U_{0,N} \cap E_N$. We will show that $G$ is $\norm{\cdot}_{E_j}$-dense in $U_{0,j}$ too.
    Let $u \in U_{0,j}$. As $E_N$ is $\norm{\cdot}_{E_j}$-dense in $E_j$ there exists a sequence $\{u_k\}_{k \in \N} \subseteq E_N$ with $\norm{\cdot}_{E_j}$-limit $u$.
    It follows that $\{u_k - \xi(u_k)g\}_{k \in \N}$ is contained in $U_{0,N}$ and satisfies $\lim_{k \to \infty} u_k - \xi(u_k)g = \lim_{k \to \infty} u_k = u$ in $E_j$ since $\xi(u_k) \to 0$.
    Lastly, $G$ is $\norm{\cdot}_{E_j}$-dense in $U_{0,N}$ and so for each $k$ we may find a $v_k \in G$ so that $\norm{v_k - (u_k - \xi(u_k)g)}_{E_N} \to 0$, which implies that $\lim_{k \to \infty} v_k = u$ in $E_j$.
    Thus the $\norm{\cdot}_{E_j}$-completion of $G$ is $U_{0,j}$.
  \end{proof}

\begin{lemma}\label{lemma:measurable_completion}
  Assume the setting of Theorem \ref{thm:quenched_linear_response}. For each $j \in \{1, \dots, N\}$ the space $\widetilde{\mathcal{E}}_j$ is equal to the $\norm{\cdot}_{\widetilde{\mathcal{E}}_j}$-completion of $\widetilde{\mathcal{E}}_N$.
  \end{lemma}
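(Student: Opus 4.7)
The plan is to prove density of $\widetilde{\mathcal{E}}_N$ in $\widetilde{\mathcal{E}}_j$ with respect to $\|\cdot\|_{\widetilde{\mathcal{E}}_j}$, from which the completion statement follows because $\widetilde{\mathcal{E}}_j$ is already a Banach space in this norm (so its subspace's $\|\cdot\|_{\widetilde{\mathcal{E}}_j}$-completion coincides with its $\widetilde{\mathcal{E}}_j$-closure). The natural strategy is to build pointwise approximations using the countable dense subsets supplied by Lemma \ref{lemma:completion_of_kernel}, exploiting that $I^\infty(\Omega, E_N)$ does not demand measurability of its representatives.

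As a first step, I would reduce to the case $\xi(f) \equiv 0$ almost surely. By (QR0), there is some $g_0 \in E_N$ with $\xi(g_0) = 1$ (since $E_N$ is $\|\cdot\|_{E_0}$-dense in $E_0$ and $\xi \ne 0$ on $E_0$); viewing $\xi(f) g_0$ as a constant $\widetilde{\mathcal{E}}_N$-valued function and subtracting, the residue $f - \xi(f) g_0 \in \widetilde{\mathcal{E}}_j$ satisfies $\xi \equiv 0$. If the kernel case is handled, adding back $\xi(f) g_0 \in \widetilde{\mathcal{E}}_N$ completes the approximation of $f$.

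Next, for $f \in \widetilde{\mathcal{E}}_j$ with $\xi(f) \equiv 0$ almost surely, I would first modify $f$ on a null set so that $\xi(f(\omega)) = 0$ for every $\omega$ and $\sup_\omega \|f(\omega)\|_{E_j} < \infty$ (these modifications do not change the class in $I^\infty$). Applying Lemma \ref{lemma:completion_of_kernel} with $K = 0$ produces a countable set $G \subseteq U_{0,N} \cap E_N$ that is $\|\cdot\|_{E_j}$-dense in $U_{0,j}$. Using the axiom of choice (valid here because $I^\infty$ tolerates non-measurable representatives), for each $n \in \mathbb{N}$ and each $\omega \in \Omega$ I would pick $\tilde{f}_n(\omega) \in G$ with $\|\tilde{f}_n(\omega) - f(\omega)\|_{E_j} < 1/n$; automatically $\xi(\tilde{f}_n(\omega)) = 0$ everywhere, so $\tilde{f}_n$ is the natural candidate for a representative in $\widetilde{\mathcal{E}}_N$.

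The step I anticipate as the main obstacle is verifying that $\tilde{f}_n$ is a genuine representative of an element of $\widetilde{\mathcal{E}}_N$, i.e., that $\esssup_\omega \|\tilde{f}_n(\omega)\|_{E_N} < \infty$, since the pointwise $E_j$-approximation alone provides no control over the $E_N$-norms of the chosen $v \in G$ when $E_N \hookrightarrow E_j$ is non-compact. To address this I would enumerate $G = \{v_k\}_{k \ge 1}$ and refine the choice by a greedy selection rule, setting $\tilde{f}_n(\omega) = v_{k_n(\omega)}$ with $k_n(\omega) = \min\{k : \|v_k - f(\omega)\|_{E_j} < 1/n\}$, then arguing that the range of $k_n$ is, up to a null set, contained in a finite initial segment of the enumeration (so that the $E_N$-norm is bounded by the maximum over a finite set); this will likely rely on a partition-of-$\Omega$ or covering argument exploiting the essential boundedness of $f$ in $E_j$. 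Granting this refinement, the estimate $\|\tilde{f}_n - f\|_{\widetilde{\mathcal{E}}_j} \le 1/n \to 0$ yields the density, completing the proof of Lemma \ref{lemma:measurable_completion}.
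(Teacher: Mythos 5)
Your construction is in fact the same as the paper's: the reduction to $\xi(f)\equiv 0$ by subtracting $\xi(f)\,g_0$ is immaterial (the paper instead takes $G$ to be $\norm{\cdot}_{E_j}$-dense in $\{g\in E_j : \xi(g)=\xi(f)\}$ directly), and your greedy rule $k_n(\omega)=\min\{k : \norm{v_k - f(\omega)}_{E_j} < 1/n\}$ reproduces exactly the paper's Borel partition $\{V_{i,n}\}_{i\geq 1}$ of $E_j$ into relative differences of $n^{-1}$-balls centred at the $g_i$, followed by $f_n(\omega)=g_i$ whenever $f(\omega)\in V_{i,n}$. You thus obtain the very same approximants with the very same estimate $\esssup_\omega\norm{f_n(\omega)-f(\omega)}_{E_j}\leq 1/n$.

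The genuine gap is exactly the one you flag as ``the main obstacle'': showing $\esssup_\omega\norm{f_n(\omega)}_{E_N}<\infty$, i.e.\ that $f_n$ really represents an element of $\widetilde{\mathcal{E}}_N$. Your proposed fix---that the range of $k_n$ lies, up to a null set, in a finite initial segment of the enumeration---is false as a general claim: essential boundedness of $f$ in $E_j$ only bounds $\norm{g_{k_n(\omega)}}_{E_j}$, and when $E_j$ is infinite-dimensional a bounded $E_j$-ball meets infinitely many of the cells $V_{i,n}$, whose centres $g_i$ may have $E_N$-norms tending to infinity. Concretely, with $E_N=H^1(\mathbb{T})$, $E_j=L^2(\mathbb{T})$, $\xi$ the Lebesgue mean, and $f$ taking infinitely many of the values $\sin(kx)$, $k\to\infty$, on sets of positive measure, any $g\in H^1$ with $\norm{g-\sin(k\,\cdot)}_{L^2}<1/4$ must satisfy $\norm{g}_{H^1}\geq k/4$; so no essentially $H^1$-bounded family can approximate $f$ within $1/4$ in $\norm{\cdot}_{\widetilde{\mathcal{E}}_j}$, and no ``partition-of-$\Omega$ or covering argument exploiting essential boundedness of $f$ in $E_j$'' of the kind you sketch can exist. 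You should be aware that the paper's own proof is silent on this exact point---it simply asserts $\{f_n\}_{n\in\N}\subseteq\widetilde{\mathcal{E}}_N$ after writing down the identical construction---so your blind attempt faithfully reproduces the published argument, and it is to your credit that you isolated the step which requires, but is not there given, a justification.
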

  \begin{proof}
    Fix $j \in \{1, \dots, N\}$ and suppose that $f \in \widetilde{\mathcal{E}}_j$.
    Recall that $\xi(f)$ is almost surely constant, and let $G \subseteq E_N$ denote a countable, $\norm{\cdot}_{E_j}$-dense subset of $\{g \in E_j : \xi(g) = \xi(f) \}$ as produced by Lemma \ref{lemma:completion_of_kernel}.
    Fix an enumeration $\{g_i\}_{i \in \N}$ of $G$ and for each $i,n \in \N$ set
    \begin{equation*}
      V_{i,n} =
      \begin{cases}
        B_{\norm{\cdot}_j}(n^{-1}, g_i) & i = 1, \\
        B_{\norm{\cdot}_j}(n^{-1}, g_i) \setminus \left(\bigcup_{k < i} B_{\norm{\cdot}_j}(n^{-1}, g_k)\right) & i > 1. \\
      \end{cases}
    \end{equation*}
    We note that $\{V_{i,n}\}_{i \in \N}$ is a countable partition of $E_j$ for each $n \in \N$, and that each $V_{i,n}$ is measurable in the Borel $\sigma$-algebra on $E_j$.
    We define a sequence of approximations $\{f_n\}_{n \in \N} \subseteq \widetilde{\mathcal{E}}_N$ by
    \begin{equation*}
      f_n(\omega) = \sum_{i=1}^\infty g_i \chi_{f^{-1}(V_{i,n})}(\omega).
    \end{equation*}
    Notice that 
     $\xi(f_n) = \xi(f)$ almost everywhere and that
    \begin{equation*}
      \esssup_{\omega} \norm{f_n(\omega) - f(\omega)}_{E_j} \le n^{-1}.
    \end{equation*}
    Hence $\lim_{n \to \infty} \norm{f_n - f}_{\widetilde{\mathcal{E}}_j} = 0$, and so $f$ is in the $\norm{\cdot}_{\widetilde{\mathcal{E}}_j}$-completion of $\widetilde{\mathcal{E}}_N$. The required claim immediately follows.
  \end{proof}

With Lemma \ref{lemma:measurable_completion} in hand we can deduce \ref{en:GKL5} for the lifted systems.
\begin{proposition}\label{prop:measurable_derivatives}
  Assume the setting of Theorem \ref{thm:quenched_linear_response} with $N \ge 2$. Then \ref{en:GKL5} holds with operators $\mathbb{Q}_j$ defined by
  \begin{equation*}
    (\mathbb{Q}_jf)(\omega) = Q_j(\sigma^{-1}\omega) f_{\sigma^{-1} \omega},
  \end{equation*}
  where $j \in \{1, \dots , N-1\}$, $i \in \{j, \dots, N\}$ and $f \in \widetilde{\mathcal{E}}_i$.
  \end{proposition}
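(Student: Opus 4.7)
The two inequalities of \ref{en:GKL5}, namely \eqref{eq:1203c} for $\mathbb{Q}_j$ and \eqref{eq:1203d} for $\mathbb{A}_\epsilon - \mathbb{A}_0 - \sum_{k=1}^{j-1}\epsilon^k \mathbb{Q}_k$, are both obtained by writing the relevant quantity pointwise in $\omega$ via the definitions of $\mathbb{A}_\epsilon$ and $\mathbb{Q}_k$, invoking the corresponding essential-$\omega$ bound from \ref{en:QR5}, and using that $\sigma$ preserves $\mathbb{P}$ (so that shifting the argument by $\sigma^{-1}$ preserves essential suprema). The point of introducing $B(\Omega,\cdot)$, and hence the spaces $\widetilde{\mathcal{E}}_i$, is precisely that this pointwise manipulation is unobstructed by measurability considerations: $\mathbb{Q}_j f$ need not be measurable, yet it still defines an element of $I^\infty(\Omega, E_{i-j})$ with the correct norm bound.

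The genuine content is to show that $\mathbb{Q}_j$ actually sends $\widetilde{\mathcal{E}}_i$ into $\widetilde{\mathcal{E}}_{i-j}$, i.e.\ that $\xi(\mathbb{Q}_j f)$ is almost surely constant. Since $\sigma$ is measure-preserving and $(\mathbb{Q}_j f)(\omega) = Q_j(\sigma^{-1}\omega) f(\sigma^{-1}\omega)$, it suffices to prove that $\xi \circ Q_j(\omega) = 0$ as an element of $E_i^*$ for almost every $\omega$. To do this I apply $\xi$ to the bound in \ref{en:QR5} and exploit that every $A_\epsilon$ is $\xi$-Markov (by (QR0)) to cancel the $A_\epsilon - A_0$ contribution: for every $f \in E_N$, every $|\epsilon| \leq 1$, and almost every $\omega$,
\begin{equation*}
\Bigl| \sum_{k=1}^{j-1} \epsilon^k \xi(Q_k(\omega) f) \Bigr| \leq C |\epsilon|^j \|f\|_{E_N}.
\end{equation*}
A polynomial in $\epsilon$ of degree at most $j-1$ dominated by $C|\epsilon|^j$ on a neighbourhood of $0$ must vanish identically, so $\xi(Q_k(\omega) f) = 0$ for every $k \in \{1,\dots,j-1\}$; taking $j = N$ covers all indices $k$ that appear.

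The main obstacle is the null-set bookkeeping required to make the conclusion uniform in $f$, as both the bound in \ref{en:QR5} and the $\xi$-Markov identity hold only up to $f$-dependent null sets. Using the separability of $E_N$ from (QR0), I choose a countable $\|\cdot\|_{E_N}$-dense subset $D \subset E_N$ together with a countable sequence $\epsilon_n \to 0$, and intersect the associated null sets to fix a single full-measure $\omega$-set on which $\xi \circ Q_k(\omega)$ vanishes on all of $D$. Continuity of $\xi \circ Q_k(\omega) \in E_N^*$ then extends the vanishing to all of $E_N$, and the $\|\cdot\|_{E_i}$-density of $E_N$ in $E_i$ from (QR0), combined with $\xi \circ Q_k(\omega) \in E_i^*$, lifts the vanishing to all of $E_i$. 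Consequently $\xi(\mathbb{Q}_j f) = 0$ almost surely for every $f \in \widetilde{\mathcal{E}}_i$, so $\mathbb{Q}_j$ indeed maps $\widetilde{\mathcal{E}}_i$ into $\widetilde{\mathcal{E}}_{i-j}$, and the verification of \ref{en:GKL5} is complete.
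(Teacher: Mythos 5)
Your proof is correct and takes a genuinely different route from the paper's. The paper establishes $\xi(\mathbb{Q}_j f)=0$ a.s.\ by approximating $f\in\widetilde{\mathcal{E}}_i$ by elements $f_n\in\widetilde{\mathcal{E}}_N$ via Lemma \ref{lemma:measurable_completion}, then characterising $\mathbb{Q}_j f_n$ as the $\epsilon\to 0$ limit of $\epsilon^{-j}(\mathbb{A}_\epsilon-\mathbb{A}_0)f_n - \sum_{k<j}\epsilon^{k-j}\mathbb{Q}_k f_n$ and running an induction on $j$ after exchanging $\lim_n$ and $\lim_\epsilon$ and applying the $\xi$-Markov property at each stage. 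You instead prove the stronger fact that $\xi\circ Q_k(\omega)=0$ as an element of $E_N^*$ (hence $E_i^*$, by density of $E_N$ in $E_i$) for a.e.\ $\omega$: applying $\xi$ to the single $j=N$ bound of \ref{en:QR5}, cancelling $(A_\epsilon-A_0)$ with the $\xi$-Markov property, and observing that a polynomial of degree $\le N-1$ with no constant term that is $O(|\epsilon|^N)$ along a sequence $\epsilon_n\to 0$ must vanish. This bypasses both the induction over $j$ and any need to appeal to Lemma \ref{lemma:measurable_completion}, and is arguably cleaner; the equivalence with the $\mathbb{P}$-a.s.\ statement then follows since $\sigma$ is measure-preserving. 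One small inaccuracy, which does not damage the argument: the bounds in \ref{en:QR5} and the $\xi$-Markov identity are stated at the operator level, so the exceptional null sets depend only on $\epsilon$, not on $f$; you therefore only need to intersect over your countable sequence $\epsilon_n\to 0$, and the countable dense subset $D\subset E_N$ is unnecessary (though harmless). You do still need separability/density hypotheses---via (QR0)---to push the vanishing of $\xi\circ Q_k(\omega)$ from $E_N$ to $E_i$, which is exactly where you use them.
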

  \begin{proof}
  It is straightforward to verify the inequalities in \ref{en:GKL5} from (QR5)
if 
we can show that 
    $\mathbb{Q}_j(\widetilde{\mathcal{E}}_i) \subset \widetilde{\mathcal{E}}_{i-j}$ for each $i\in \{j, \ldots , N\}$ and $j\in \{i,\ldots , N-1\}$ because $( A_\epsilon ,\sigma )$ is $\xi$-Markov for every $\epsilon \in [-1,1]$. 
%
%
Furthermore, for each 
 $f \in \widetilde{\mathcal{E}}_i$ we have
 \[
      \esssup_{\omega} \norm{(\mathbb{Q}_j f)(\omega)}_{E_{i-j}} \le \left( \esssup_{\omega} \norm{Q_j(\omega)}_{L(E_{i},E_{i-j})} \right)       \esssup_{\omega}\norm{f(\omega )}_{E_i},
 \]
    which is finite due to (QR5).
%
%
%
%
%
Hence it suffices to show that $\xi(\mathbb{Q}_jf)$ is almost surely constant for each $i\in\{1,\ldots ,N\}$, $j\in \{i,\ldots ,N-1\}$ and $f \in \widetilde{\mathcal{E}}_i$.
    By Lemma \ref{lemma:measurable_completion} we may find a sequence $\{f_n\}_{n \in \N} \subseteq \widetilde{\mathcal{E}}_N$ such that 
    $\norm{f_n - f}_{\widetilde{\mathcal{E}}_{i}} \to 0$ as $n\to\infty$.
   Using  \ref{en:QR5}, we have for each $n \in \N$ that
\[
      \mathbb{Q}_j f_n = \lim_{\epsilon \to 0} \left(\epsilon^{-j} (\mathbb{A}_\epsilon - \mathbb{A}_0)f_n - \sum_{k=1}^{j-1} \epsilon^{k-j} \mathbb{Q}_k f_n\right)
\]
   in $E_{N-j-1}$ almost surely,  with the convention $\sum_{k=1}^{0} \epsilon^{k-j} \mathbb{Q}_k f_n =0$.
This  implies that
\begin{equation}\label{eq:gw2}
      \mathbb{Q}_j f= \lim _{n\to \infty}\lim_{\epsilon \to 0} \left(\epsilon^{-j} (\mathbb{A}_\epsilon - \mathbb{A}_0)f_n - \sum_{k=1}^{j-1} \epsilon^{k-j} \mathbb{Q}_k f_n\right)
\end{equation}
       in $E_0$ almost surely.
Therefore,     since  $( A_\epsilon ,\sigma )$ is $\xi$-Markov for every $\epsilon \in [-1,1]$ we have almost surely that $\xi (\mathbb{Q}_1f) = 0$  and
    \begin{equation*}
      \xi (\mathbb{Q}_jf) = -\lim_{n \to \infty} \lim_{\epsilon \to 0} \sum_{k=1}^{j-1} \epsilon^{k-j} \xi (\mathbb{Q}_k f_n).
    \end{equation*}
 By induction with respect to $j$ we deduce that $\xi (\mathbb{Q}_jf) = 0$ almost surely.
  \end{proof}

By Proposition \ref{prop:measurable_derivatives} we have \ref{en:GKL5} for the lifts $\{\mathbb{A}_\epsilon\}_{\epsilon \in [-1,1]}$ with the spaces $\{ \widetilde{\mathcal{E}}_j) \} _{0\leq j\leq N}$ whenever $N \ge 2$ in the setting of Theorem \ref{thm:quenched_linear_response}, and so we can apply Theorem \ref{thm:GKL} in this case.
As a consequence we may now finish the proof of Theorem \ref{thm:quenched_linear_response}.
Let $\eta \in (0, \log (1/\alpha )/\log (M/\alpha ))$ and fix $a \in ( \alpha ,1)$ so that $\eta = \log (a/\alpha )/\log (M/\alpha )$.
Recall $\delta$ from \eqref{eq:quenched_linear_response_4} and notice that we may take $\delta$ to be as small as we like. 
Henceforth we fix $\delta \in (0, 1-a)$ and choose some $\delta_0 \in (0, \min \{\delta, 1 - a - \delta\})$.
Upon recalling the statement of Theorem \ref{thm:GKL} and our earlier characterisation of $\sigma(\mathbb{A}_0 | \widetilde{\mathcal{E}}_j)$ for $j \in \{1, \dots, N\}$ (recall the remark following \eqref{eq:quenched_linear_response_2}), we have
\begin{equation*}
  D_\delta \subseteq \{ z \in \C : \abs{z} \ge s \text{ and } \abs{z-1} \ge \delta_0 \} \subseteq \mathcal{V}_{\delta_0, a}(\mathbb{A}_0).
\end{equation*}
We now apply Theorem \ref{thm:GKL} to the lifts $\{\mathbb{A}_\epsilon\}_{\epsilon \in [-1,1]}$ with Banach spaces $\widetilde{\mathcal{E}}_j$, $j \in \{0, \dots, N\}$, to deduce the existence of $\epsilon_\eta \in (0, \epsilon_0)$ such that for every $\epsilon \in (-\epsilon_\eta, \epsilon_\eta)$ we have $\mathcal{V}_{\delta_0, a}(A_0) \cap \sigma(\mathbb{A}_\epsilon | \widetilde{\mathcal{E}}_1) = \emptyset$ and, for each $z \in \mathcal{V}_{\delta_0, a}(\mathbb{A}_0)$, that
\begin{equation}\label{eq:quenched_linear_response_6}
  \norm{(z- \mathbb{A}_\epsilon)^{-1} - \mathbb{S}_\epsilon^{(N)}(z)}_{L(\widetilde{\mathcal{E}}_N, \widetilde{\mathcal{E}}_0)} \le C \abs{\epsilon}^{N-1 + \eta},
\end{equation}
where $\mathbb{S}_\epsilon^{(N)}(z)$ is defined as in \eqref{eq:GKL_1}.
With \eqref{eq:quenched_linear_response_6} in hand we may proceed with obtaining \eqref{eq:quenched_linear_response_1} via \eqref{eq:quenched_linear_response_5}.
In particular, for $z \in D_\delta$ we have
\begin{equation}\begin{split}\label{eq:quenched_linear_response_7}
  \left((z - \mathbb{A}_\epsilon)^{-1} - (z - \mathbb{A}_0)^{-1}\right)v_0 = &\sum_{k=1}^{N-1} \epsilon^k  \sum_{m=1}^k \sum_{\substack{l_1 + \cdots + l_m = k \\ l_i \ge 1}} \left(\prod_{i=1}^m  (z - \mathbb{A}_0)^{-1}\mathbb{Q}_{l_i} \right)(z - \mathbb{A}_0)^{-1}v_0 \\
  &+ \left((z - \mathbb{A}_\epsilon)^{-1} - \mathbb{S}_\epsilon^{(N)} \right)v_0.
\end{split}\end{equation}
For each $k \in \{1, \dots, N-1\}$ we now define $v_0^{(k)}  \in \widetilde{\mathcal E}_0$ by
\[
  v_0^{(k)} = \intf_{D_\delta} \sum_{m=1}^k \sum_{\substack{l_1 + \cdots + l_m = k \\ l_i \ge 1}} \left(\prod_{i=1}^m   (z - \mathbb{A}_0)^{-1}\mathbb{Q}_{l_i} \right)(z - \mathbb{A}_0)^{-1}v_0 dz
\]
(recall that $v_0 \in \widetilde{\mathcal{E}}_N$). 
Furthermore, notice that $\xi(v_0^{(k)}) = 0$ almost surely for all $k$ as per the proof of Proposition \ref{prop:measurable_derivatives}.
By integrating \eqref{eq:quenched_linear_response_7} over $D_\delta$ and recalling \eqref{eq:quenched_linear_response_5} we get
\begin{equation}\label{eq:quenched_linear_response_8}
  v_\epsilon = v_0 + \sum_{k=1}^{N-1} \epsilon^{k} v_0^{(k)} + \intf_{D_\delta} \left((z - \mathbb{A}_\epsilon)^{-1} - \mathbb{S}_\epsilon^{(N)} \right)v_0 dz
\end{equation}
 in $\widetilde{\mathcal E}_0$.
Moreover, since 
 $D_\delta \subseteq \mathcal{V}_{\delta_0,a}(\mathbb{A}_0)$ it follows from \eqref{eq:quenched_linear_response_6}  that 
\begin{equation}\label{eq:quenched_linear_response_8b}
\begin{split}
  \norm{\intf_{D_\delta} \left((z - \mathbb{A}_\epsilon)^{-1} - \mathbb{S}_\epsilon^{(N)} \right)v_0 dz}_{\widetilde{\mathcal{E}}_0} &\le \sup_{z \in \mathcal{V}_{\delta_0,s}(\mathbb{A}_0)} \norm{(z- \mathbb{A}_\epsilon)^{-1} - \mathbb{S}_\epsilon^{(N)}(z)}_{L(\widetilde{\mathcal{E}}_N, \widetilde{\mathcal{E}}_0)} \norm{v_0}_{\widetilde{\mathcal{E}}_N} \\
  &\le C \norm{v_0}_{\widetilde{\mathcal{E}}_N} \abs{\epsilon}^{N-1 + \eta}.
\end{split}\end{equation}

Finally we show that $ v_0^{(k)}$   lies in $\mathcal E_0  \subset L^\infty (\Omega , E_0)$
 for each $k=1, \ldots ,N-1$.
Recall that $v_\epsilon \in \mathcal E_0 $ for every $\epsilon \in (-\epsilon _0,\epsilon _0)$. Thus $\epsilon ^{-1}(v_\epsilon - v_0)$ belongs to $ \mathcal E_0 $. 
Therefore, since  $ \mathcal E_0 $ isometrically injects into $\widetilde{\mathcal E}_0 $ (recall the argument above  \eqref{eq:gx1}), 
  it follows from the Taylor expansion  \eqref{eq:quenched_linear_response_8}, \eqref{eq:quenched_linear_response_8b}  that $\{\epsilon ^{-1}(v_\epsilon - v_0)\}_{\vert \epsilon \vert <\epsilon _0}$ is a Cauchy sequence in $ \mathcal E_0 $. 
Denote its limit by $v^\prime _0$, so that $\epsilon ^{-1}(v_\epsilon - v_0) - v_0' $ lies in $ \mathcal E_0 $ and  $\Vert \epsilon ^{-1}(v_\epsilon - v_0) - v_0' \Vert _{ \mathcal E_0 }\to 0$ as $\epsilon \to 0$.
Then by using again the fact that 
  $ \mathcal E_0 $ isometrically injects into $\widetilde{\mathcal E}_0$, we deduce that $v'_0$ equals the limit of $\{\epsilon ^{-1}(v_\epsilon - v_0)\}_{\vert \epsilon \vert <\epsilon _0}$ in $\widetilde{\mathcal E}_0$.
Hence, $v_0' = v_0^{(1)}$ by \eqref{eq:quenched_linear_response_8} and \eqref{eq:quenched_linear_response_8b}, which concludes that $v_0^{(1)}$ lies in $ \mathcal E_0 $. 
By induction (by considering $\epsilon ^{-k}( v_\epsilon - v_0 - \sum_{j=1}^{k-1} \epsilon^{j} v_0^{(j)} )$ instead of $\epsilon ^{-1}(v_\epsilon - v_0)$), we can show that $v_0^{(k)}$ also lies in $ \mathcal E_0 $ for each $k=2, \ldots , N-1$.
This completes the proof of  Theorem \ref{thm:quenched_linear_response} because \eqref{eq:quenched_linear_response_8} and \eqref{eq:quenched_linear_response_8b} hold with $\mathcal E_j$ in place of $\widetilde{\mathcal E}_j $.
\color{black}

\section{Applications to smooth random dynamical systems}\label{s:application}

In this section we shall apply Theorem \ref{thm:quenched_linear_response} to smooth random dynamical systems in order to obtain stability and differentiability results for their random equivariant probability measures. In particular, we will treat random Anosov maps and random U(1) extensions of expanding maps.
The treatments of these settings have much in common, and so we will discuss some general, abstract details in earlier subsections.

\subsection{Equivariant family of measures}
Let $M$ be a compact \color{black} connected \color{black} $\mathcal C^\infty$ Riemannian manifold and let $m$ denote the associated Riemannian probability measure on $M$. 
Fix a Lebesgue space $(\Omega, \mathcal{F}, \mathbb{P})$ and an  invertible, ergodic, $\mathbb P$-preserving map $\sigma : \Omega \to \Omega$.
For some $r \ge 1$ let $\mathcal{T} : \Omega \to \mathcal{C}^{r+1}(M,M)$ denote a $(\mathcal{F}, \mathcal{B}_{\mathcal{C}^{r+1}(M,M)})$-measurable map.
The RDS $(\mathcal T, \sigma )$ induced by $\mathcal T$ over $\sigma$ is explicitly written as  a measurable map
\[
\mathbb N_0 \times \Omega \times X \ni (n, \omega ,x)\mapsto \mathcal T^{(n)}_\omega (x), \quad \mathcal T^{(n)}_\omega :=\mathcal T_{\sigma^{n-1} \omega} \circ \cdots \circ \mathcal T _\omega,
\]
and since $\sigma$ is invertible,  the equivariance of a measurable family of probability measures $\{ \mu _\omega \}_{\omega \in \Omega}$ for $(\mathcal T,\sigma )$ is given as
\[
\mu_{\omega } \circ \mathcal T _{\omega }^{-1} = \mu_{\sigma \omega } \quad \text{ for almost every $\omega \in \Omega$},
\]
(recall Subsection \ref{ss:RDS}).

We aim to study the regularity of the dependence of $\{ \mu _\omega \}_{\omega \in \Omega}$ on the map $\mathcal{T}$ as $\mathcal{T}$ is fiber-wise varied in a uniformly $\mathcal{C}^N$ way for some $N \le r $.
To do this we shall realise equivariant families of probability measures as fixed points of (the lifts of) certain operator cocycles (linear RDSs) and then apply Theorem \ref{thm:quenched_linear_response}.
In particular, we shall consider the Perron-Frobenius operator cocycle associated to the RDS $( \mathcal{T} ,\sigma )$ on an appropriate Banach space.
Recall that the \emph{Perron-Frobenius operator} $\LL_T$ associated to a non-singular\footnote{Recall that a measurable map $T : M \to M$ is said to be non-singular (with respect to $m$) if $m(A) = 0$ implies that $m(T^{-1}(A)) = 0$.} measurable map  $T : M\to M$ is given by
\begin{equation}\label{eq:0914b}
\mathcal L_T f = \frac{\mathrm{d} [(f m) \circ T]}{\mathrm{d}m} \quad \text{for $f\in L^1(M,m)$},
\end{equation}
where $fm$ is a finite signed measure given by $(fm)(A) =\intf _A f dm$ for $A\in \mathcal A$ and $\mathrm{d}\mu /\mathrm{d}m$ is the Radon-Nikodym derivative of an absolutely continuous finite signed measure $\mu$.
Note that for each $M$-valued random variable $\psi$ whose distribution is $fm$ with some density function $f\in L^1(M,m)$, 
   $T(\psi )$ has the distribution  $(\mathcal L_T f) m$ (and thus,
    $\mathcal L_T$ is also called the \emph{transfer operator} associated with  $T$).
It is straightforward to see that
\begin{equation}\label{eq:pf_duality}
\intf_M \mathcal L_T fg dm = \intf_M fg\circ T dm \quad\quad \text{for $f\in L^1(M,m)$  and $g\in L^{\infty}(M,m)$},
\end{equation}
and that  $\mathcal L_T$ is an $m$-Markov operator,  where in an abuse of notation we are denoting the linear functional $f \in L^1(M,m) \mapsto \intf f dm$ by $m$.
In addition, $\LL_T$ is positive: if $f \in L^1(M,m)$ satisfies $f \ge 0$ almost  everywhere then $\LL_{T} f \ge 0$ almost  everywhere.

If\footnote{Notice that if $\det D_xT \ne 0$ then $T$ is automatically non-singular with respect to $m$.} $\det D_xT \ne 0$ for any $x\in M$ then since $T\in \mathcal{C}^{r+1}(M,M)$, one has $\LL_T \in L(\mathcal{C}^{r}(M))$ with
\begin{equation*}
  (\LL_T f)(x) =  \sum_{T(y) = x} \frac{f(y)}{\abs{\det D_y T}} \quad \text{for all $f \in \mathcal{C}^r(M)$}.
\end{equation*}
Hence from $\mathcal{T}$ we obtain a map $\LL_\mathcal{T}: \omega \mapsto \LL_{\mathcal{T}_\omega} : \Omega \to  L(\mathcal{C}^r(M))$, which is measurable by virtue of the following proposition. (We  postpone its proof until Appendix \ref{a:pt} because it is technical and  standard.)
  Let $\mathcal{N}^{r+1}(M,M)$ denote the set of $T \in \mathcal{C}^{r+1}(M,M)$ satisfying $\det D_xT \ne 0$ for any $x\in M$. 
\begin{proposition}\label{prop:cont_strong_op_topology}
The map $T \mapsto \LL_T$ is continuous on $\mathcal{N}^{r+1}(M,M)$ with respect to the strong operator topology on $L(\mathcal{C}^r(M))$.
\end{proposition}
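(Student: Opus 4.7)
The plan is to prove continuity via a local-branch representation of the transfer operator. Fix $T \in \mathcal{N}^{r+1}(M,M)$ and let $T_n \to T$ in $\mathcal{C}^{r+1}(M,M)$ with $T_n \in \mathcal{N}^{r+1}(M,M)$. I would first observe that since $\det D_xT\neq 0$ everywhere and $M$ is compact and connected, $T$ is a proper local $\mathcal{C}^{r+1}$-diffeomorphism, hence a covering map of some finite degree $d$. A standard topological-degree argument then shows that, for $n$ large, $T_n$ is also a $d$-sheeted covering (and $|\det D_yT_n|$ is uniformly bounded away from $0$ and $\infty$ for all $y$ and all large $n$).

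Next I would fix a finite open cover $\{U_j\}$ of $M$ by evenly covered sets for $T$, so that $T^{-1}(U_j) = \bigsqcup_{i=1}^d V_{j,i}$ with $T|_{V_{j,i}} : V_{j,i} \to U_j$ a $\mathcal{C}^{r+1}$-diffeomorphism with inverse $\psi_{j,i}$. Applying the parametric inverse function theorem (whose dependence on the map is $\mathcal{C}^{r+1}$-continuous) on slightly shrunk charts $U_j' \Subset U_j$, one obtains, for all $n$ sufficiently large, unique inverse branches $\psi_{n,j,i} : U_j' \to M$ of $T_n$ satisfying $\psi_{n,j,i} \to \psi_{j,i}$ in $\mathcal{C}^{r+1}(U_j')$. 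Choosing a partition of unity $\{\rho_j\}$ subordinate to $\{U_j'\}$, I can rewrite
\begin{equation*}
  (\LL_{T_n} f)(x) = \sum_j \rho_j(x) \sum_{i=1}^d \frac{f(\psi_{n,j,i}(x))}{\abs{\det D_{\psi_{n,j,i}(x)} T_n}},
\end{equation*}
and similarly for $T$, once $n$ is large enough that $\psi_{n,j,i}$ enumerates all preimages.

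Fixing $f \in \mathcal{C}^r(M)$, continuity of composition $(g,\psi) \mapsto g\circ \psi$ from $\mathcal{C}^r(M) \times \mathcal{C}^{r+1}(U_j',M)$ into $\mathcal{C}^r(U_j')$ yields $f \circ \psi_{n,j,i} \to f \circ \psi_{j,i}$ in $\mathcal{C}^r$; similarly, since $\det D T$ is a $\mathcal{C}^r$ function of $T$, one gets $\abs{\det D_{\psi_{n,j,i}(\cdot)}T_n}^{-1} \to \abs{\det D_{\psi_{j,i}(\cdot)}T}^{-1}$ in $\mathcal{C}^r(U_j')$ (using the uniform lower bound on $|\det DT_n|$ obtained in the first step). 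Multiplying by the smooth cutoffs $\rho_j$ and summing over $j,i$ gives $\LL_{T_n} f \to \LL_T f$ in $\mathcal{C}^r(M)$, which is strong operator convergence.

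The main obstacle is the parametric inverse function theorem step: one must verify that the inverse branches $\psi_{n,j,i}$ exist on a common domain and converge in $\mathcal{C}^{r+1}$ when $T_n \to T$ only in $\mathcal{C}^{r+1}$. This is routine but slightly tedious, since it requires carefully chaining the standard quantitative inverse function theorem with induction on the order of differentiation (or, equivalently, invoking a $\mathcal{C}^{r+1}$-smooth implicit function theorem). Once this local convergence is in hand, all remaining manipulations reduce to continuity of composition and multiplication in $\mathcal{C}^r$, which is standard.
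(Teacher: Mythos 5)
Your proposal is correct and follows essentially the same route as the paper's Appendix \ref{a:pt}: both use the covering-map structure of $T$, finiteness of the degree via compactness and the lower bound on $\abs{\det DT}$, a finite atlas with local inverse branches that converge when $T_n \to T$ in $\mathcal{C}^{r+1}$, a partition of unity to localise, and then reduce to continuity of $\psi \mapsto f\circ\psi$ and $\psi \mapsto \abs{\det D_{\psi(\cdot)}T_n}^{-1}$ in $\mathcal{C}^r$. The only cosmetic difference is that the paper unfolds the ``continuity of composition'' step explicitly via the adjugate-matrix formula and repeated differentiation, whereas you invoke it as a known fact together with the parametric inverse function theorem.
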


Thus if we demand that $\mathcal{T} \in \mathcal{N}^{r+1}(M,M)$ almost surely then $( \LL_{\mathcal{T}}, \sigma )$ is an $m$-Markov operator cocycle on $\mathcal{C}^r(M)$, which we shall call the \emph{Perron-Frobenius operator cocycle} (on $\mathcal{C}^r(M)$) associated to $\mathcal{T}$.
In order to apply the theory of Section \ref{sec:spectral_approach_stability} we require that the Perron-Frobenius operator cocycle is bounded and $m$-mixing. This later condition will entail some mixing hypotheses on our random systems.
However, as in the deterministic case, in order to realise the mixing of the RDS in operator theoretic terms we may be forced to consider the Perron-Frobenius operator cocycle on an alternative Banach space. Specifically, we shall seek Banach spaces $(X, \norm{\cdot}_X)$ satisfying the following conditions:
\begin{enumerate}[label=\textnormal{(s\arabic*)}]
  \item \label{en:S1} $\mathcal{C}^r(M)$ is dense in $X$ with $\mathcal{C}^r(M) \into X$;
  \item \label{en:S2} The embedding $\mathcal{C}^r(M) \into (\mathcal{C}^\infty(M))^*$ given by the map $h \in \mathcal{C}^r(M) \mapsto (g \in \mathcal{C}^\infty(M) \mapsto \intf gh dm)$ continuously extends to an embedding $X \into (\mathcal{C}^\infty(M))^*$.
\end{enumerate}
It is clear that any $X$ satisfying \ref{en:S1} must be separable.
Moreover, we note that the functional $\varphi \in (\mathcal{C}^\infty(M))^* \mapsto \varphi(1_M)$ is continuous on $(\mathcal{C}^\infty(M))^*$, and yields $m$ when pulled-back via the embedding $\mathcal{C}^r(M) \into (\mathcal{C}^\infty(M))^*$ that is described in \ref{en:S2}.
Hence, if \ref{en:S2} holds, and so we have an embedding $X \into (\mathcal{C}^\infty(M))^*$ that continuously extends the $\mathcal{C}^r(M) \into (\mathcal{C}^\infty(M))^*$, then $m$ induces a continuous linear functional on $X$. In particular, we may speak of $m$-Markov operators in $L(X)$.
The following proposition gives a sufficient condition for an $m$-Markov operator  in $L(\mathcal C^r(M))$ to be extended to an $m$-Markov operator in $L(X)$.

\begin{proposition}\label{prop:measurable_extension}
Let $(A, \sigma )$ be a bounded, $m$-Markov operator cocycle on $\mathcal C^r(M)$ and $X$  a Banach space satisfying \ref{en:S1} and \ref{en:S2}.  
Suppose that
  \begin{equation}\label{eq:s3}
    \esssup_\omega \sup_{\substack{f \in \mathcal{C}^r(M) \\ \norm{f}_X = 1}} \norm{A (\omega )f}_{X} < \infty.
  \end{equation}
  Then $A$ almost surely extends to a unique, bounded operator on $X$ such that $\omega \mapsto A(\omega ) : \Omega \to  L(X)$ is strongly measurable. Consequently, $( A,\sigma )$ is a bounded, $m$-Markov operator cocycle on $X$ such that 
    \begin{equation}\label{eq:s3b}
      \esssup_\omega \norm{A (\omega )}_{L(X)} <\infty.
      \end{equation}
  \end{proposition}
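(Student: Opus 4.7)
The plan is to construct the extension fiber-wise using the standard bounded-linear-transformation argument, then upgrade to strong measurability by density, and finally inherit the $m$-Markov property by continuity.

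First, since $\mathcal{C}^r(M)$ is dense in $X$ by \ref{en:S1} and the assumption \eqref{eq:s3} provides a $\mathbb{P}$-full set $\Omega_0$ and a constant $K < \infty$ with $\sup_{f \in \mathcal{C}^r(M),\, \|f\|_X = 1} \|A(\omega) f\|_X \le K$ for every $\omega \in \Omega_0$, the B.L.T.\ theorem applies at each $\omega \in \Omega_0$ and yields a unique bounded extension $\widetilde{A}(\omega) \in L(X)$ of $A(\omega)$ with $\|\widetilde{A}(\omega)\|_{L(X)} \le K$. Setting $\widetilde{A}(\omega) = 0$ off $\Omega_0$ gives \eqref{eq:s3b} and uniqueness of the extension.

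Second, I want to verify that $\widetilde{A} : \Omega \to L(X)$ is strongly measurable. By Lemma \ref{lemma:strong_measurability_summary}(3) it suffices to show that $\omega \mapsto \widetilde{A}(\omega) f$ is $(\mathcal{F}, \mathcal{B}_X)$-measurable for every $f \in X$; this uses that $X$ is separable, which follows from \ref{en:S1} because $\mathcal{C}^r(M)$ is separable and dense in $X$. For $f \in \mathcal{C}^r(M)$, strong measurability of $A$ on $\mathcal{C}^r(M)$ gives that $\omega \mapsto A(\omega) f$ is $(\mathcal{F}, \mathcal{B}_{\mathcal{C}^r(M)})$-measurable, hence $(\mathcal{F}, \mathcal{B}_X)$-measurable since the inclusion $\mathcal{C}^r(M) \hookrightarrow X$ is continuous. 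For general $f \in X$, choose a sequence $\{f_n\} \subset \mathcal{C}^r(M)$ with $\|f_n - f\|_X \to 0$; the uniform bound $\|\widetilde{A}(\omega)\|_{L(X)} \le K$ on $\Omega_0$ implies $\widetilde{A}(\omega) f_n \to \widetilde{A}(\omega) f$ in $X$ for every $\omega \in \Omega_0$, so $\omega \mapsto \widetilde{A}(\omega) f$ is a pointwise a.e.\ limit of $(\mathcal{F}, \mathcal{B}_X)$-measurable maps into the metric space $X$ and is therefore itself measurable (after redefinition on the null set $\Omega \setminus \Omega_0$).

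Third, the $m$-Markov property transfers to $X$ directly: condition \ref{en:S2} identifies $m$ with a continuous linear functional on $X$ (through evaluation of $\varphi \in (\mathcal{C}^\infty(M))^*$ against $1_M$, which extends the integration functional on $\mathcal{C}^r(M)$). Since $m(A(\omega) f) = m(f)$ holds for every $f \in \mathcal{C}^r(M)$ and a.e.\ $\omega$, both sides being continuous in $f \in X$ and $\mathcal{C}^r(M)$ being dense in $X$, one concludes $m(\widetilde{A}(\omega) f) = m(f)$ for all $f \in X$ and a.e.\ $\omega$. Combined with the previous two points this verifies all the requirements of Definition \ref{defn:linear_rds} and the $m$-Markov condition of Definition \ref{dfn:Markov}, yielding the proposition.

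The only mildly delicate step is the measurability upgrade from $\mathcal{C}^r(M)$ to $X$: one must insist on a \emph{single} $\mathbb{P}$-full set $\Omega_0$ on which the uniform bound holds, so that the sequence $\widetilde{A}(\omega) f_n$ converges pointwise in $X$ without any $f$-dependent exceptional set. Everything else is routine Banach-space bookkeeping.
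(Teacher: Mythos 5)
Your proof is correct and takes essentially the same route as the paper: fibre-wise extension by density using the essential-norm bound, strong measurability via approximation of general $f \in X$ by a $\mathcal{C}^r(M)$ sequence together with Lemma~\ref{lemma:strong_measurability_summary}(3) and completeness of the Lebesgue space, and transfer of the $m$-Markov property by continuity of the extended functional from \ref{en:S2}. Your explicit isolation of a single $\mathbb{P}$-full set $\Omega_0$ from \eqref{eq:s3} is a clean way to phrase what the paper handles implicitly via the completeness of $(\Omega,\mathcal{F},\mathbb{P})$.
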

  \begin{proof}
  It is clear that $A$ almost surely extends to a unique, bounded operator on $X$, and that
    \begin{equation*}
      \esssup_\omega \norm{A (\omega )}_{L(X)} = \esssup_\omega \sup_{\substack{f \in \mathcal{C}^r(M) \\ \norm{f}_X = 1}} \norm{A (\omega )f}_{X} < \infty.
    \end{equation*}
    That $A$ is almost surely  $m$-Markov in $L(X)$ follows straightforwardly from the fact that $A$ is almost surely  $m$-Markov in $L(\mathcal{C}^r(M))$, and that $m$ uniquely extends to a continuous linear functional on $X$.
    Hence, it only remains to show that $\omega \mapsto A (\omega )$ is strongly measurable in $L(X)$.
    Suppose that $f \in X$. Then there exists a sequence $\{f_n\}_{n \in \N} \subset \mathcal{C}^r(M)$ with limit $f$ in $X$.
    For each $n$ the map $\omega \mapsto A (\omega ) f_n$ is $(\mathcal{F}, \mathcal{B}_{\mathcal{C}^r(M)})$-measurable, and so it must be $(\mathcal{F}, \mathcal{B}_{X})$ measurable too due to \ref{en:S1}.
    Moreover, for almost every $\omega$ we have
    \begin{equation*}
      \lim_{n \to \infty} \norm{A (\omega ) f_n - A (\omega ) f}_X = 0,
    \end{equation*}
    which is to say that $\omega \mapsto A (\omega ) f$ is the almost everywhere pointwise limit (in $X$) of $(\mathcal{F}, \mathcal{B}_{X})$-measurable functions.
    Hence $\omega \mapsto A (\omega ) f$ is $(\mathcal{F}, \mathcal{B}_{X})$-measurable since $(\Omega , \mathcal F, \mathbb P)$ is a Lebesgue space (in particular complete).
    That $\omega \mapsto A (\omega )$ is strongly measurable in $L(X)$ then follows from Lemma \ref{lemma:strong_measurability_summary} and the fact that $f \in X$ was arbitrary.
  \end{proof}

Hence, 
by Propositions \ref{prop:cont_strong_op_topology} and \ref{prop:measurable_extension}, 
if $\mathcal T: \Omega \to \mathcal N^{r+1}(M,M)$ is measurable and $X$ satisfies \ref{en:S1} and \ref{en:S2}, then
the Perron-Frobenius operator cocycle $( \LL_{\mathcal{T}} ,\sigma )$ on $\mathcal C^r(M)$ can be extended to a bounded, $m$-Markov operator cocycle on $X$.
Compare also \eqref{eq:s3b} 
 with (QR1).

The following proposition will help us to describe the relationship between the equivariant family of probability measures for $(\mathcal{T} , \sigma )$ and the fixed point of the lift of a bounded, $m$-mixing Perron-Frobenius operator cocycle $(\mathcal L_{\mathcal{T} }, \sigma )$.
\begin{definition}\label{dfn:1208}
Assume that $X$ satisfies \ref{en:S1}.
$A\in L(X)$ is called \emph{positive} if $A(X_+) \subset X_+$, where 
$X_+$ is the completion of $\{ f \in \mathcal{C}^r(M) : f \ge 0\}$ in $\norm{\cdot}_X$.
 An operator cocycle $(A,\sigma)$ is called \emph{positive} if $A$ is almost surely positive.
 Furthermore, a distribution $f\in (\mathcal C^\infty(M))^*$ is called \emph{positive} if $ f( g) \geq 0$ for every $g\in \mathcal C^\infty(M)$ such that $g\geq 0$.
\end{definition}

\begin{proposition}\label{prop:existence_equivariant_measure}
  Let $X$ be a Banach space satisfying \ref{en:S1} and \ref{en:S2} and $(A,\sigma)$  a bounded, $m$-Markov operator cocycle on $X$.
  Suppose that $(A,\sigma)$ is positive and $m$-mixing  and that $h$ is the unique $m$-normalised fixed point of the lift $\mathbb{A} : \mathcal X\to \mathcal X$ on $\mathcal X\subset L^\infty (\Omega ,X)$ (recall \eqref{eq:1122c} for its definition).
  Then there exists a 
   measurable family of Radon probability measures $\{ \mu _\omega \} _{\omega \in \Omega}$  such that $h(\omega )(g) = \int g d\mu _\omega$ for every $g\in \mathcal C^\infty (M)$ and
    almost every $\omega$.
  \end{proposition}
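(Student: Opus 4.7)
The plan is to interpret $h(\omega) \in X$ via the embedding of \ref{en:S2} as a distribution on $M$, verify that this distribution is positive with unit mass on $1_M$, and then invoke the Riesz--Markov--Kakutani theorem on $\mathcal{C}(M)$ to produce the measure $\mu_\omega$.

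Three properties of $h(\omega)$, regarded as a linear functional on $\mathcal{C}^\infty(M)$, need to be extracted: the normalization $h(\omega)(1_M) = 1$, positivity $h(\omega)(g) \geq 0$ for $g \geq 0$, and sup-norm continuity. Normalization is immediate from the observation recorded just after \ref{en:S2}: the functional $\varphi \mapsto \varphi(1_M)$ on $(\mathcal{C}^\infty(M))^*$ pulls back through the embedding to $m$ on $X$, and $h$ is $m$-normalized by hypothesis. A priori $\mathcal{C}^\infty$-continuity of $h(\omega)$ follows from $h \in L^\infty(\Omega, X)$ combined with the continuity of $X \hookrightarrow (\mathcal{C}^\infty(M))^*$.

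The main step, and the one I expect to be the principal obstacle, is positivity. I would start from $1_M \in \mathcal{C}^r(M) \subset X_+$, viewed as a constant element of the space $\mathcal{X}_1$ from \eqref{eq:spectral_properties_lift_1} (taken here with $\xi = m$ and $c = 1$). By the proof of Proposition \ref{prop:spectral_properties_lift}, some iterate $\mathbb{A}^{n_0}$ contracts $\mathcal{X}_1$, so $\mathbb{A}^n 1_M \to h$ in $\mathcal{X} \subseteq L^\infty(\Omega, X)$ as $n \to \infty$. Since $A$ is almost surely positive and $1_M \in X_+$, each iterate $(\mathbb{A}^n 1_M)(\omega) = A^{(n)}(\sigma^{-n}\omega) 1_M$ lies in $X_+$ for almost every $\omega$. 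Convergence in $L^\infty(\Omega, X)$ forces $(\mathbb{A}^n 1_M)(\omega) \to h(\omega)$ in $X$ for almost every $\omega$, and since $X_+$ is closed in $X$ by definition, $h(\omega) \in X_+$ almost surely. For any $g \in \mathcal{C}^\infty(M)$ with $g \geq 0$, selecting $f_n \in \mathcal{C}^r(M)$ with $f_n \geq 0$ and $f_n \to h(\omega)$ in $X$ and pairing with $g$ through \ref{en:S2} gives $h(\omega)(g) = \lim_n \int f_n g \, dm \geq 0$.

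With all three properties in hand, the estimate $|h(\omega)(g)| \leq h(\omega)(\|g\|_\infty 1_M) = \|g\|_\infty$ (obtained by applying positivity to $\|g\|_\infty 1_M \pm g$) shows that $h(\omega)$ is sup-norm continuous, so by density extends uniquely to a positive linear functional of norm $1$ on $\mathcal{C}(M)$. The Riesz--Markov--Kakutani theorem then yields a unique Radon probability measure $\mu_\omega$ on $M$ with $h(\omega)(g) = \int g\, d\mu_\omega$ for every $g \in \mathcal{C}(M)$, and in particular for every $g \in \mathcal{C}^\infty(M)$. Measurability of $\{\mu_\omega\}_{\omega \in \Omega}$ is inherited from strong measurability of $h$: for each $g \in \mathcal{C}^\infty(M)$ the map $\omega \mapsto h(\omega)(g) = \int g \, d\mu_\omega$ is $(\mathcal{F}, \mathcal{B}_{\mathbb{R}})$-measurable, and density of $\mathcal{C}^\infty(M)$ in $\mathcal{C}(M)$ in sup-norm, combined with $\mu_\omega$ being a probability measure, propagates this to all $g \in \mathcal{C}(M)$, which is enough to conclude measurability of the family in the standard sense.
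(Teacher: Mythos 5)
Your proof is correct and follows essentially the same strategy as the paper's: establish $h(\omega)\in X_+$ almost surely by exploiting the positivity and $m$-Markov properties to produce iterates converging to $h$ inside the positive cone, deduce that $h(\omega)$ is a positive distribution via the density encoded in \ref{en:S2}, and then invoke Riesz to produce the Radon probability measure. The only cosmetic difference is that the paper formulates the positivity step by running the contraction mapping argument on the closed invariant set $\mathcal{D}=\{f\in L^\infty(\Omega,X):m(f)=1,\ f\in X_+\ \text{a.s.}\}$ rather than iterating the specific element $1_M$, and it phrases the last step as ``a positive distribution is a Radon measure'' rather than routing explicitly through sup-norm continuity and Riesz--Markov--Kakutani; these amount to the same argument.
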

  \begin{proof}
    Notice that the set
    \begin{equation*}
      \mathcal{D} = \{ f \in L^\infty(\Omega,X) : m(f) = 1 \text{ and } f \in X_+ \text{ almost surely}\}
    \end{equation*}
    is almost surely invariant under $A(\omega)$ since $(A,\sigma)$ is bounded, positive, and $m$-Markov.
    Hence we may carry out the construction of $h$ in Proposition \ref{prop:spectral_properties_lift} with $\mathcal{D}$ in place of $\mathcal{X}_1$, 
     to conclude that $h \in X_+$ almost surely.
    Thus, 
     there exists $\{ f_k\}_{k \in \N} \subseteq L^\infty (\Omega ,\mathcal{C}^r(M))$ such that $f_k(\omega ) \ge 0$ and $\intf f_k (\omega ) dm = 1$ for every $k$ and so that $\lim_{k \to \infty} f_k (\omega )= h(\omega )$ in $X$ for almost every $\omega$.
    As $X \into (\mathcal{C}^\infty(M))^*$ it follows that $\lim_{k \to \infty} f_k (\omega )= h(\omega )$ in the sense of distributions as well. 
    Thus for any positive $g \in \mathcal{C}^\infty(M)$  
      we have
    \begin{equation}\label{eq:existence_equivariant_measure_1}
   h(\omega )(g)= \lim_{k \to \infty} f_k(\omega ) (g) = \lim_{k \to \infty} \intf f_k(\omega ) \cdot g dm
    \end{equation}
    (recall the embedding of $\mathcal C^r(M)$ in \ref{en:S2}).
    As $f_k(\omega )$ and $g$ are positive, it follows from \eqref{eq:existence_equivariant_measure_1} that $h(\omega )(g)  \ge 0$ for every such $g$.
    Hence $h(\omega )$ is a positive distribution for almost every $\omega$. 
    On the other hand, as is well known, 
 for any positive $f\in (\mathcal C^\infty (M))^*$, one can find a positive Radon measure $\mu _f$ such that $f(g) =\int gd\mu _f$ for every $g\in \mathcal C^\infty (M)$. 
 We denote by $\mu _\omega$ the 
  positive Radon measure corresponding to $h(\omega )$.

 To see that $\mu _\omega $ is a probability measure for almost every $\omega$,
     we note that by
      \eqref{eq:existence_equivariant_measure_1} and as $\intf f_k(\omega ) dm = 1$ for every $k$
       we have
    \begin{equation*}
      \mu_\omega(M) = h(\omega )(1_M) = \lim_{k \to \infty} \intf f_k(\omega) dm = 1.
    \end{equation*}
Finally, $\{ \mu _\omega \} _{\omega \in \Omega}$  is a measurable family on the complete probability space $(\Omega , \mathcal F, \mathbb P)$ 
 because for any $A\in \mathcal B_M$, by using
      \eqref{eq:existence_equivariant_measure_1} again  we have
\[
\mu _\omega (A) = h(\omega )(1_A) 
 = \lim_{k \to \infty} \intf _A f_k(\omega )  dm
\]
for almost every $\omega$, 
while for every $k$, $\omega \mapsto f_k(\omega ) :\Omega \to  \mathcal C^r(M)$ is 
measurable   and $f\mapsto  \intf _A f  dm : \mathcal C^r(M) \mapsto \mathbb C$ is continuous, so that $\omega \mapsto \intf _A f_k(\omega )  dm$ is measurable too.
  \end{proof}

Hence if $X$ satisfies \ref{en:S1} and \ref{en:S2}, and  the Perron-Frobenius operator cocycle $( \LL_{\mathcal{T}} , \sigma )$ on $X$ is $m$-mixing, then we obtain a
  measurable family of Radon probability measures $\{ \mu _\omega \} _{\omega \in \Omega}$ such that $h: \omega \mapsto (g \mapsto \int g d\mu _\omega )$ is in $L^\infty (\Omega , X)$.
  Furthermore, $\{ \mu _\omega \} _{\omega \in \Omega}$ is equivariant because
   it follows from  
      \eqref{eq:existence_equivariant_measure_1} that for any $A\in \mathcal B_M$ and almost every $\omega$
\[
\mu _\omega (T_\omega ^{-1} (A)) =h(\omega ) \big(1_{T_\omega ^{-1} (A)}\big)=
\lim _{k\to\infty}\int f_k(\omega ) \cdot 1_{ A} \circ T_\omega dm.
\]
Due to \eqref{eq:pf_duality}, the continuity of $\mathcal L_{T_\omega} : X\to X$ and the fact that $h$ is the fixed point of the lift of $(\mathcal L_{\mathcal T}, \sigma )$, this coincides with
\[
\lim _{k\to\infty}\int \mathcal L_{T_\omega} f_k(\omega ) \cdot 1_{ A} dm
=\lim _{k\to\infty} \mathcal L_{T_\omega} f_k(\omega ) (1_{ A})
= \mathcal  L_{T_\omega }h(\omega ) (1_A) =h(\sigma \omega )(1_A) =\mu _{\sigma \omega } (A).
\]

\subsection{The conditions (QR4) and (QR5)} 

In this subsection we discuss a sufficient condition for a family of Perron-Frobenius operator cocycles $\{ (\mathcal L_{\mathcal T_\epsilon } ,\sigma )\} _{\epsilon \in [-1,1]}$ to satisfy (QR4) and (QR5).
We emphasise that these conditions hold rather independently of how the underlying random dynamics $(\mathcal T_\epsilon, \sigma )$ behaves (see Proposition \ref{prop:1209a} for precise statement), so we treat (QR4) and (QR5) here as a final preparation before specialising to our applications.
For simplicity, throughout  this subsection we assume that $M$ is a $d$-dimensional torus $\mathbb T^d$.
One may straightforwardly remove this assumption by considering a partition of unity,  refer to e.g.~\cite{gouezel2006banach, baladi2018dynamical} (see also Appendix \ref{a:pt}).

Notice that (QR4) and (QR5) are conditions for a single iteration $\mathcal L_{T_{\epsilon ,\omega }}$ (not for $\mathcal L_{T_{\epsilon , \sigma ^{n-1} \omega}} \circ \cdots \circ \mathcal L_{T_{\epsilon , \omega}}$, $n\in \mathbb N$), 
 and so clear observations may be found in the non-random setting.
Fix $r\geq 1$ and $1\leq s \leq r$, and
we will consider $T\in \mathcal C^N([-1,1], \mathcal C^{r+1}(\mathbb T^d ,\mathbb T^d))$.
Let $1\leq N\leq s$ be an integer, and $E_j$, $j \in \{0, \dots, N\}$,  Banach spaces with $E_j \into E_{j-1}$ for each $j \in \{1, \dots, N\}$ satisfying  the following conditions:
\begin{enumerate}[label=\textnormal{(S\arabic*)}]
  \item  \ref{en:S1} holds with $E_j$ in place of $X$ for each $j \in \{0, \dots, N\}$;
  \item  \ref{en:S2} holds with $E_j$ in place of $X$ for each $j \in \{0, \dots, N\}$;
  \item
  \label{en:S3}  There are constants $C>0$ and $0\leq \rho \leq r-N$ 
   such that 
\[
\Vert uf \Vert _{E_j} \leq C\Vert u \Vert _{\mathcal C^{\rho  +j}} \Vert f\Vert _{E_j} \quad \text{for each  $u, f\in \mathcal C^r(\mathbb T^d)$ and $j\in \{0,\ldots ,N\}$}.
\]
\item \label{en:S4} There is a constant $C>0$ such that 
\[
\left\Vert \frac{\partial}{\partial x_l } f\right\Vert _{E_{j-1}} \leq C\Vert f\Vert _{E_j} \quad \text{for each $ f\in \mathcal C^r(\mathbb T^d)$, $ l\in \{1,\ldots , d\}$ and $ j\in \{ 1, \ldots , N\}$}.
\]
\end{enumerate}
Observe that all conditions   $\mathrm{(S1)}$-$\mathrm{(S4)}$ are not for the operators $\mathcal L_{T_\epsilon }$, $\epsilon \in [-1,1]$, with $T_\epsilon := T(\epsilon )$, but for the spaces $E_j$, $j\in \{0,\ldots ,N\}$, so the following proposition is quite useful 
 in our applications.
Note that  if
  \begin{equation}\label{eq:1209b}
  \norm{\mathcal L_{T_\epsilon }f}_{E_j} < C_\epsilon \Vert f\Vert _{E_j}\quad \text{   for each $f \in \mathcal{C}^r(\mathbb T^d)$, $j \in \{0, \dots, N\}$ and $\vert \epsilon \vert \leq 1$},
  \end{equation}
then it follows from Proposition \ref{prop:measurable_extension} that $\mathcal L_{T_\epsilon}$ is a bounded operator on $E_j$ for each  $j \in \{0, \dots, N\}$ and $\vert \epsilon \vert \leq 1$.

\begin{proposition}\label{prop:1209a}
Let  $N$ be a positive integer, $T \in \mathcal C^N([-1,1], \mathcal C^{r+1}(\mathbb T^d ,\mathbb T^d))$, and  $E_j$, $j \in \{0, \dots, N\}$,  Banach spaces with $E_j \into E_{j-1}$ for each $j \in \{1, \dots, N\}$ satisfying $\mathrm{(S1)}$-$\mathrm{(S4)}$.
Suppose that  $T_\epsilon \in \mathcal N^{r+1}(\mathbb T^d ,\mathbb T^d)$ for each $\epsilon \in [-1,1]$ and \eqref{eq:1209b} holds.
Then 
$\epsilon \mapsto \mathcal L_{T_\epsilon } f$ is in
$\mathcal C^j([-1,1], E_{i-j})$
 for each $j \in \{1, \dots, N\}$, $i\in \{ j,\ldots , N\}$ and $f\in E_i$.
\end{proposition}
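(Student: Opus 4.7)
The plan is to argue by induction on $j$, deriving explicit formulas for $\partial_\epsilon^j \mathcal{L}_{T_\epsilon} f$ that lose exactly one derivative in the $E$-scale per differentiation in $\epsilon$. The starting identity for the base case $j=1$ comes from duality: for $f \in \mathcal{C}^r$ and $g \in \mathcal{C}^\infty$, one has $\int (\mathcal{L}_{T_\epsilon} f)\, g\, dm = \int f\,(g \circ T_\epsilon)\, dm$. Differentiating in $\epsilon$ on the right, applying the chain rule to $g \circ T_\epsilon$, and re-invoking the duality identity yields, in the sense of distributions,
\[
\partial_\epsilon \mathcal{L}_{T_\epsilon} f \;=\; -\sum_{l=1}^{d} \partial_l\, \mathcal{L}_{T_\epsilon}\!\bigl(f \cdot (\partial_\epsilon T_\epsilon)_l\bigr),
\]
with the passage from $\mathbb{T}^d$ to a general $M$ handled by a standard partition of unity as in \cite{gouezel2006banach, baladi2018dynamical}.

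The next step is to verify that the right-hand side is a well-defined element of $E_{i-1}$ whenever $f \in E_i$ with $i \geq 1$. Since $T \in \mathcal{C}^N([-1,1], \mathcal{C}^{r+1})$ and $\rho + i \leq \rho + N \leq r$, each component $(\partial_\epsilon T_\epsilon)_l$ lies in $\mathcal{C}^{\rho+i}$ uniformly in $\epsilon$, so \ref{en:S3} gives $\|f\cdot(\partial_\epsilon T_\epsilon)_l\|_{E_i} \leq C\|f\|_{E_i}$; the hypothesis \eqref{eq:1209b} keeps the image in $E_i$, and \ref{en:S4} then places the output in $E_{i-1}$. Density of $\mathcal{C}^r$ in $E_i$ from \ref{en:S1} extends the identity to arbitrary $f \in E_i$. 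To upgrade this to genuine $\mathcal{C}^1$-regularity, I would split
\[
\mathcal{L}_{T_\epsilon}\bigl(f(\partial_\epsilon T_\epsilon)_l\bigr) - \mathcal{L}_{T_{\epsilon_0}}\bigl(f(\partial_{\epsilon_0} T_{\epsilon_0})_l\bigr) = (\mathcal{L}_{T_\epsilon} - \mathcal{L}_{T_{\epsilon_0}})\bigl(f(\partial_\epsilon T_\epsilon)_l\bigr) + \mathcal{L}_{T_{\epsilon_0}}\bigl(f\bigl[(\partial_\epsilon T_\epsilon)_l - (\partial_{\epsilon_0} T_{\epsilon_0})_l\bigr]\bigr);
\]
the second term vanishes in $E_i$ by \ref{en:S3} together with continuity of $\epsilon \mapsto \partial_\epsilon T_\epsilon$ in $\mathcal{C}^{r+1}$, and the first goes to zero in $E_i$ by strong-operator continuity of $\epsilon \mapsto \mathcal{L}_{T_\epsilon}$ on $E_i$.

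The main obstacle is precisely this last point: Proposition \ref{prop:cont_strong_op_topology} supplies SOT continuity of $\mathcal{L}_{T_\epsilon}$ only on $\mathcal{C}^r(M)$, not on $E_i$. I would overcome it with a three-epsilon argument, approximating $f \in E_i$ by $f_n \in \mathcal{C}^r$ in $E_i$-norm via \ref{en:S1}, applying Proposition \ref{prop:cont_strong_op_topology} to each $f_n$ (noting that $\mathcal{C}^r \hookrightarrow E_i$ by \ref{en:S1}, so $\mathcal{C}^r$-convergence upgrades to $E_i$-convergence), and controlling the tails by the uniform-in-$\epsilon$ operator-norm bound $\sup_{|\epsilon|\leq 1}\|\mathcal{L}_{T_\epsilon}\|_{L(E_i)} < \infty$, which follows from \eqref{eq:1209b}, compactness of $[-1,1]$, and the Banach--Steinhaus principle applied to the strongly continuous family on the dense subspace $\mathcal{C}^r$. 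For $j \geq 2$ the argument iterates: each additional $\partial_\epsilon$ hits either a multiplier (producing a new $\mathcal{C}^{r+1}$ factor that stays $\mathcal{C}^{N-j}$-smooth in $\epsilon$) or the operator $\mathcal{L}_{T_\epsilon}$ (producing another outer pair $\partial_l \mathcal{L}_{T_\epsilon}(\,\cdot\,(\partial_\epsilon T_\epsilon)_l)$), so $\partial_\epsilon^j \mathcal{L}_{T_\epsilon} f$ becomes a finite linear combination of terms of the form $\partial^\alpha \mathcal{L}_{T_\epsilon}(u_\alpha(\epsilon) f)$ with $|\alpha|\leq j$ and $u_\alpha \in \mathcal{C}^{r+1}$ depending $\mathcal{C}^{N-j}$-smoothly on $\epsilon$; the same \ref{en:S3}--\ref{en:S4}--\eqref{eq:1209b} chain then places each such term in $E_{i-|\alpha|} \subseteq E_{i-j}$, and continuity is obtained by the same three-epsilon argument applied now to the already-continuous data.
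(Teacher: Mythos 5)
Your overall route — deriving $\partial_\epsilon \LL_{T_\epsilon} f = -\sum_l \partial_l \LL_{T_\epsilon}\bigl(f\,(\partial_\epsilon T_\epsilon)_l\bigr)$ from the pairing $\int (\LL_{T_\epsilon} f)\,g\,dm = \int f\,(g\circ T_\epsilon)\,dm$, then bookkeeping regularities with \ref{en:S3}, \ref{en:S4} and \eqref{eq:1209b}, then iterating — is a genuinely different presentation from the paper's. The paper works from the factorisation $\LL_T = \mathcal{M}_1\circ\mathcal{M}_2$ through local inverse branches and computes $\partial_\epsilon\mathcal{M}_1$, $\partial_\epsilon\mathcal{M}_2$ explicitly via the adjugate of $DT_\epsilon$, arriving at a formula where the spatial derivatives $\partial^\alpha$ sit \emph{inside} $\LL_{T_\epsilon}$ (acting on $f$), so its multipliers $\widehat{J}_{k,\alpha}$ live in $\mathcal{C}^{r-k}$ and lose smoothness with each $\epsilon$-differentiation. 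Your formula puts the $\partial^\alpha$ \emph{outside} $\LL_{T_\epsilon}$ and keeps the multipliers $u_\alpha$ at the full $\mathcal{C}^{r+1}$ spatial regularity coming from $\partial_\epsilon^k T_\epsilon$; the two are related by integration by parts through the inverse branches, and each lands in $E_{i-j}$. Your route buys a cleaner one-line recursion and avoids organising the local-branch combinatorics; the paper's route has the advantage of producing a formula of the form $\LL_{T_\epsilon}(\cdot)$ directly, which makes it trivial that the output is $\xi$-Markov-adapted and slots straight into the later computation of the $Q_j(\omega)$.

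Two points need repair. First, the uniform-in-$\epsilon$ bound $\sup_{|\epsilon|\le 1}\|\LL_{T_\epsilon}\|_{L(E_i)} < \infty$ does not follow from Banach--Steinhaus as you invoke it: you only have the pointwise bound $\sup_\epsilon \|\LL_{T_\epsilon} f\|_{E_i} < \infty$ for $f$ in the dense subspace $\mathcal{C}^r \subset E_i$ (from SOT continuity on $\mathcal{C}^r$), and the uniform boundedness principle requires the pointwise bound on \emph{all} of the complete space $E_i$, not on a dense, non-complete subspace in a weaker norm (the usual counterexamples show this is essential). In practice the uniform bound does hold — $\{T_\epsilon : |\epsilon|\le 1\}$ is a compact subset of $\mathcal{C}^{r+1}$, so the constants in the explicit transfer-operator estimates that verify \eqref{eq:1209b} in the applications are automatically uniform — but the justification should be by compactness of the family, not Banach--Steinhaus. (The paper's own final paragraph also silently takes $\sup_\epsilon$ of terms bounded only by the $\epsilon$-dependent $C_\epsilon$ of \eqref{eq:1209b}, so this is a latent implicit assumption in both treatments.) Second, you establish the derivative formula in the sense of distributions and show the candidate derivative is a continuous curve, but you do not close the loop to genuine Fréchet differentiability in $E_{i-j}$: one still needs the difference quotients to converge, either by a weak-to-strong differentiability lemma (continuity of the candidate plus the distributional identity against a total subset of $(E_{i-j})^*$) or, as the paper does, by proving the smooth case $f\in\mathcal{C}^r$ first and then showing the curves $\epsilon\mapsto \LL_{T_\epsilon}f_n$ are Cauchy in $\mathcal{C}^j([-1,1],E_{i-j})$ as $f_n\to f$ in $E_i$. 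Both gaps are fixable, but as written the proof is incomplete on these two points.
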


Before starting the proof of Proposition \ref{prop:1209a}, we discuss a consequence of Proposition \ref{prop:1209a} with respect to the conditions (QR4) and (QR5).
Let  
$\{(\mathcal T_\epsilon , \sigma )\}_{\epsilon \in [-1,1]}$ be a family of RDSs such that
for almost every $\omega$, 
 the map
  $\epsilon \mapsto T_{\epsilon , \omega }:= \mathcal T_\epsilon (\omega )$ is in $\mathcal C^N([-1,1], \mathcal C^{r+1}(\mathbb T^d ,\mathbb T^d))$.
 Let $E_j$, $j \in \{0, \dots, N\}$ be  Banach spaces with $E_j \into E_{j-1}$ for each $j \in \{1, \dots, N\}$ satisfying $\mathrm{(S1)}$-$\mathrm{(S4)}$.
 We suppose that
 \begin{enumerate}
  \item[(S5)] 
  \label{S:5} 
$T_{\epsilon ,\omega } \in \mathcal N^{r+1}(\mathbb T^d ,\mathbb T^d)$ for each $\epsilon \in [-1,1]$ and almost every $\omega$.
Furthermore, 
\eqref{eq:s3} holds with $E_j$ and $\mathcal L_{T_{\epsilon , \omega}}$ in place of $X$ and $A(\omega )$ for every $j\in \{ 0,\ldots , N\}$ and $\vert \epsilon \vert \leq 1$. 
\end{enumerate}
Then, it follows from Proposition \ref{prop:measurable_extension}, the Perron-Frobenius operator cocyles $(\mathcal L_{\mathcal T_{\epsilon }}, \sigma )$, $\epsilon \in [-1,1]$, can be extended to a bounded operator cocycles on each $E_j$, and (QR1) holds for these operator cocycles by virtue of \eqref{eq:s3b}.

For each $j\in \{ 0, \ldots , N\}$, $i\in \{ j,\ldots , N\}$ and almost every $\omega$, it follows from Proposition \ref{prop:1209a} that we can define $Q_j(\omega ) : E_i \to E_{i-j}$ by
\[
Q_j (\omega ) f = \frac{1}{j!} \left(\frac{d^j}{d\epsilon ^j} \mathcal L_{T_{\epsilon ,\omega } } f \right) _{\epsilon =0} \quad \text{for $f\in E_i$}.
\]
By the definition,
  it is straightforward to see that 
   for all $\epsilon \in [-1,1]$ and $2\leq j\leq N$,
  \begin{equation*}
    \esssup_\omega \norm{\mathcal L_{T_{\epsilon ,\omega}} - \mathcal L_{T_{0 ,\omega}}  }_{L(E_N, E_{N-1})} \le C \abs{\epsilon},
  \end{equation*}
  and
  \begin{equation*}
    \esssup_{\omega} \norm{\mathcal L_{T_{\epsilon ,\omega}} - \mathcal L_{T_{0 ,\omega}} - \sum_{k=1}^{j-1} \epsilon^k Q_k(\omega) }_{L(E_N, E_{N-j})} \le C \abs{\epsilon}^j.
  \end{equation*}
  To summarise the above argument, we conclude the following:
  \begin{corollary}\label{cor:1211}
Suppose that 
 $\mathrm{(S1)}$-$\mathrm{(S5)}$ hold for  the family of Perron-Frobenius operator cocyles $\{ (\mathcal L_{\mathcal T_{\epsilon }}, \sigma )\}_{\epsilon \in [-1,1]}$ on Banach spaces $E_j$, $j\in \{0, \ldots ,N\}$.
 Then
  $\mathrm{(QR1)}$, $\mathrm{(QR4)}$ and $\mathrm{(QR5)}$  hold.
  \end{corollary}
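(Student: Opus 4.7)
The corollary is essentially an immediate consequence of Proposition~\ref{prop:measurable_extension} combined with a fiberwise application of Proposition~\ref{prop:1209a}; the only real content is verifying that the resulting constants may be taken essentially independent of $\omega$.

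First, (QR1) is immediate: condition (S5) supplies the hypothesis \eqref{eq:s3} of Proposition~\ref{prop:measurable_extension} on each $E_j$ for each $|\epsilon| \le 1$, so its conclusion \eqref{eq:s3b} gives $\esssup_\omega \|\mathcal L_{T_{\epsilon,\omega}}\|_{L(E_j)} < \infty$ uniformly in $\epsilon$, which is (QR1).

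For (QR4) and (QR5), the plan is to fix $\omega$ in the full-measure set on which $\epsilon \mapsto T_{\epsilon,\omega}$ is $\mathcal C^N([-1,1], \mathcal C^{r+1})$ and $T_{\epsilon,\omega} \in \mathcal N^{r+1}$ for every $\epsilon$. The bound \eqref{eq:1209b} is supplied by (S5), so Proposition~\ref{prop:1209a} applies fiberwise: for every $f \in E_i$ with $i \ge j$, the map $\epsilon \mapsto \mathcal L_{T_{\epsilon,\omega}} f$ lies in $\mathcal C^j([-1,1], E_{i-j})$, with $k$-th Taylor coefficient at $\epsilon=0$ equal to $Q_k(\omega) f$ as defined in the corollary. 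Taylor's theorem with integral remainder then gives, for $f \in E_N$,
\begin{equation*}
\mathcal L_{T_{\epsilon,\omega}} f - \mathcal L_{T_{0,\omega}} f - \sum_{k=1}^{j-1} \epsilon^k Q_k(\omega) f = \int_0^\epsilon \frac{(\epsilon - t)^{j-1}}{(j-1)!}\, \partial_t^j \mathcal L_{T_{t,\omega}} f \, dt
\end{equation*}
in $E_{N-j}$. Taking $L(E_N, E_{N-j})$-norms and then essential suprema yields (QR5), with the case $j=1$ (empty sum convention) giving (QR4), provided the $L(E_N, E_{N-j})$-norm of $\partial_\epsilon^j \mathcal L_{T_{\epsilon,\omega}}$ is essentially bounded in $\omega$, uniformly for $|\epsilon|\le 1$.

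The main (and essentially only) obstacle is therefore this uniformity. An inspection of the proof of Proposition~\ref{prop:1209a} shows that $\|\partial_\epsilon^j \mathcal L_{T_{\epsilon,\omega}}\|_{L(E_N,E_{N-j})}$ is bounded by a polynomial expression in the $\mathcal C^{r}$-norms of mixed derivatives of $T_{\epsilon,\omega}$ in $(\epsilon,x)$ (controlled by the $\mathcal C^N([-1,1], \mathcal C^{r+1})$-norm of $T_{\cdot,\omega}$, which the standing hypotheses on $\mathcal T_\epsilon$ are to be read as essentially bounded in $\omega$) and in the operator norms $\|\mathcal L_{T_{\epsilon,\omega}}\|_{L(E_j)}$ (controlled by (S5) via Proposition~\ref{prop:measurable_extension}). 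Granted these two essential bounds, the integrand in the remainder is essentially bounded uniformly in $\omega$ and $t \in [-1,1]$, so the integral is bounded by $C|\epsilon|^j$ uniformly in $\omega$, completing the proof.
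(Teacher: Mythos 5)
Your proposal follows the same route the paper itself takes (in the paragraph immediately preceding the corollary): (QR1) from Proposition~\ref{prop:measurable_extension} via (S5), and then (QR4)/(QR5) by applying Proposition~\ref{prop:1209a} fiberwise in $\omega$ to define $Q_j(\omega)f = \frac{1}{j!}\bigl(\frac{d^j}{d\epsilon^j}\mathcal L_{T_{\epsilon,\omega}}f\bigr)_{\epsilon=0}$ and reading off the Taylor estimates. The one place you supply more scaffolding than the paper, namely the explicit integral remainder and the observation that the $\esssup_\omega$ constants in (QR4)/(QR5) require the curves $\epsilon\mapsto T_{\epsilon,\omega}$ to be essentially bounded in the $\mathcal C^N([-1,1],\mathcal C^{r+1})$ norm (together with an essential lower bound on $|\det DT_{\epsilon,\omega}|$, which you should also flag), is precisely what the paper compresses into ``by the definition it is straightforward to see''; so this is the same proof, made slightly more explicit.
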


We now return to the proof of Proposition \ref{prop:1209a}.
\begin{proof}[Proof of Proposition \ref{prop:1209a}]
Fix $1\leq \sigma \leq N$, $1\leq \rho \leq r$,  $f\in \mathcal C^{\sigma}([-1,1] , \mathcal C^\rho (\mathbb T^d))$, $g\in \mathcal C^\rho (\mathbb T^d)$ and $1\leq l\leq d$
for the time being 
(notice that  this $f$ is different from $f$ in the statement of Proposition \ref{prop:1209a} in the sense that this $f$ depends on $\epsilon \in [-1,1]$).
 We  simply denote $\frac{d^a}{d\epsilon ^a} f \in \mathcal C^{\sigma -a}([-1,1] , \mathcal C^\rho (\mathbb T^d))$ by $f^{(a)}$ for each  integer $a\in [0, \sigma ]$.
 We also simply denote by $\partial _l g$ the partial derivative of  $g$ with respect to the $l$-th coordinate and 
let $\partial ^\alpha = \partial ^{\alpha _1}_1 \cdots \partial ^{\alpha _d}_d$ and $\vert \alpha \vert = \alpha _1 +\cdots + \alpha _d$ for each multi-index $\alpha  =(\alpha _1 ,\ldots , \alpha _d) \in \mathbb N_0^d$.
Then for each $ \epsilon \in [-1,1]$ and $x\in \mathbb T^d$,
\[
\partial _{l} (f ^{(1)}_\epsilon )(x)= \partial _\epsilon \partial _{l} \tilde f  (\epsilon , x) =  \partial _{l}\partial _\epsilon \tilde f  (\epsilon , x)
\]
where $\tilde f: [-1,1] \times \mathbb T^d \to \mathbb T^d$ is given by $\tilde f (\epsilon , x) = f_\epsilon (x)$. 
  (Since $f^{(1)} \in 
    \mathcal C^{0}([-1,1] , \mathcal C^1 (\mathbb T^d))$,
  it is straightforward to see that  the first equality holds and
    $(\epsilon ,x) \mapsto \partial _{l} (f ^{(1)}_\epsilon )(x) $ is continuous.
 The second equality also immediately follows from these observations together with   Schwarz-Clairaut's theorem on equality of mixed partials.)
 In particular, 
 \begin{equation}\label{eq:1211} 
 f ^{(1)} =(\epsilon \mapsto \partial _\epsilon \tilde f  (\epsilon , \cdot ) )\quad \text{in $\mathcal C^{\sigma -1 }([-1,1], \mathcal C^{\rho }(\mathbb T^d))$}.
 \end{equation}
Furthermore, it is also straightforward to see that
 the map $\epsilon \mapsto \partial _l f_ \epsilon $ is in $ \mathcal C^\sigma ([-1,1],  \mathcal C^{\rho -1}(\mathbb T^d))$, which  we denote  by $\partial _l f$ as a slight abuse of notation, and that
\begin{equation}\label{eq:1013b2} 
(\partial _{l}f )^{(1)}= \partial _{l} (f^{(1)})  \quad \text{in $\mathcal C^{\sigma -1}([-1,1] , \mathcal C^{\rho -1}(\mathbb T^d))$},
\end{equation}
\begin{equation}\label{eq:1013b} 
(\det  DT ) ^{(1)} = \det DT ^{(1)} \quad \text{in $\mathcal C^{N-1}([-1,1] , \mathcal C^{r-1}(\mathbb T^d))$}.
\end{equation}
Moreover, we denote by  $T_{(l)}\in \mathcal C^N([-1,1], \mathcal C^{r+1}(\mathbb T^d))$  the map $\epsilon \mapsto (x\mapsto T_{(l), \epsilon } (x))$, where $ T_{(l), \epsilon } (x)$ is the $l$-th coordinate of $T_\epsilon (x) \in \mathbb T^d$ (under the identification of $\mathbb T^d$ with $\mathbb R^d$).
Finally we define a map $\mathbf Lf: [-1,1] \to \mathcal C^{\rho }(\mathbb T^d)$
 by
\[
(\mathbf Lf )_ \epsilon  = \mathcal L_{T_ \epsilon } f_\epsilon \quad \text{for  $\epsilon \in [-1,1]$},
\]
which is well-defined by virtue of \eqref{eq:1209b}.
The following is the key lemma for the proof of Proposition  \ref{prop:1209a}.
\begin{lemma}\label{prop:2.1}
For each $f\in \mathcal C^\sigma ([-1,1], \mathcal C^\rho (\mathbb T^d))$ with  $\sigma \in [1,N]$ and $\rho \in [1,r]$,  $(\mathbf L f) ^{(1)}$ exists in $\mathcal C^{\sigma -1} ([-1,1], \mathcal C^{\rho -1}(\mathbb T^d))$, and  is of the form
\begin{equation}\label{eq:1010}
(\mathbf L f) ^{(1)}= \mathbf L \left(\sum _{\vert \alpha \vert \leq 1} J_{0,\alpha } \cdot \partial ^\alpha f  +  \sum _{\vert \alpha \vert \leq 1} J_{1, \alpha } \cdot \partial ^\alpha  f^{(1)}  \right), 
\end{equation}
where  $J_{k ,\alpha  } $ is in $\mathcal C^{N-1}([-1,1] ,\mathcal C^{r-1} (\mathbb T^d))$ 
being a polynomial function of  $\partial ^\beta T_{(l )}$, $\partial ^\beta   T_{(l )}^{(1)}$ ($1\leq l \leq d$, $\vert \beta \vert \leq 2$) and $( \det DT)^{-1}$
 for each $k=0,1$  and multi-index $\alpha $ with $\vert \alpha\vert \leq 1$.
\end{lemma}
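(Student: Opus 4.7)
The plan is to derive the formula for $(\mathbf{L}f)^{(1)}$ by the duality--integration-by-parts method, then verify the claimed regularity. Fix $g \in \mathcal{C}^\infty(\mathbb{T}^d)$ and consider the scalar function
\[
\epsilon \mapsto \intf g \cdot \mathcal{L}_{T_\epsilon}(f_\epsilon)\, dm = \intf (g \circ T_\epsilon) \cdot f_\epsilon\, dm,
\]
where the equality uses \eqref{eq:pf_duality}. Since $T \in \mathcal{C}^N([-1,1], \mathcal{C}^{r+1})$ and $\sigma, \rho \ge 1$, standard calculus gives
\[
\tfrac{d}{d\epsilon}\intf (g\circ T_\epsilon)\cdot f_\epsilon\, dm = \sum_l \intf (\partial_l g \circ T_\epsilon) \cdot T_{(l),\epsilon}^{(1)} \cdot f_\epsilon\, dm + \intf (g\circ T_\epsilon)\cdot f^{(1)}_\epsilon\, dm.
\]
Applying the chain-rule identity $(\partial_l g) \circ T_\epsilon = \sum_k [(DT_\epsilon)^{-1}]_{kl}\, \partial_k (g\circ T_\epsilon)$, integrating by parts on $\mathbb{T}^d$ (no boundary terms), and using \eqref{eq:pf_duality} again to push the resulting $g\circ T_\epsilon$ back across as $\mathcal{L}_{T_\epsilon}$, one obtains
\[
\tfrac{d}{d\epsilon}\intf g\cdot \mathcal{L}_{T_\epsilon}(f_\epsilon)\,dm = \intf g \cdot \mathcal{L}_{T_\epsilon}(H_\epsilon)\, dm,
\]
where $H_\epsilon := -\sum_{l,k} \partial_k\!\bigl([(DT_\epsilon)^{-1}]_{kl}\, T_{(l),\epsilon}^{(1)}\, f_\epsilon\bigr) + f^{(1)}_\epsilon$.

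Expanding the product rule inside $\partial_k$ and grouping terms rewrites $H_\epsilon$ in the form $\sum_{|\alpha|\le 1} J_{0,\alpha}\,\partial^\alpha f_\epsilon + \sum_{|\alpha|\le 1} J_{1,\alpha}\,\partial^\alpha f^{(1)}_\epsilon$ with $J_{1,0}=1$, $J_{1,e_k}=0$, and $J_{0,\alpha}$ built from $[(DT)^{-1}]_{kl}$, $\partial_k [(DT)^{-1}]_{kl}$, $T_{(l)}^{(1)}$, $\partial_k T_{(l)}^{(1)}$. Writing $(DT)^{-1} = (\det DT)^{-1} \operatorname{adj}(DT)$ and differentiating, each $J_{k,\alpha}$ is manifestly polynomial in $\partial^\beta T_{(l)}$, $\partial^\beta T_{(l)}^{(1)}$ ($|\beta|\le 2$) and $(\det DT)^{-1}$, as claimed. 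Since $\det DT$ is continuous and nowhere zero on the compact set $[-1,1]\times \mathbb{T}^d$, the factor $(\det DT)^{-1}$ lies in $\mathcal{C}^N([-1,1],\mathcal{C}^r)$; combined with $T \in \mathcal{C}^N([-1,1],\mathcal{C}^{r+1})$ and \eqref{eq:1013b}, this places each $J_{k,\alpha}$ in $\mathcal{C}^{N-1}([-1,1],\mathcal{C}^{r-1})$.

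To upgrade the computation above from a weak identity (pairing with $g$) to a strong statement in $\mathcal{C}^{\sigma-1}([-1,1],\mathcal{C}^{\rho-1})$, I would show that $\epsilon \mapsto \mathbf{L}(H)_\epsilon \in \mathcal{C}^{\rho-1}(\mathbb{T}^d)$ is continuous: the coefficients $J_{k,\alpha}$ are continuous in $\epsilon$ into $\mathcal{C}^{r-1}\subseteq \mathcal{C}^{\rho-1}$, the terms $\partial^\alpha f_\epsilon$ and $\partial^\alpha f^{(1)}_\epsilon$ are continuous in $\epsilon$ into $\mathcal{C}^{\rho-1}$ (using \eqref{eq:1211}--\eqref{eq:1013b2}), products are continuous by Banach-algebra-type estimates, and $\mathcal{L}_{T_\epsilon}$ is continuous in $\epsilon$ in the strong operator topology on $\mathcal{C}^{\rho-1}$ by Proposition \ref{prop:cont_strong_op_topology}. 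Testing $\mathbf{L}f_\epsilon - \mathbf{L}f_0 - \intf_0^\epsilon \mathbf{L}(H)_s\,ds$ against an arbitrary $g \in \mathcal{C}^\infty$ gives zero by the weak identity and Fubini, and density of $\mathcal{C}^\infty$ in the predual of $\mathcal{C}^{\rho-1}$ (in a suitable sense, or directly using that $\mathcal{C}^\infty$-test-functions separate points of $\mathcal{C}^{\rho-1}$) forces the integrand identity, yielding $(\mathbf{L}f)^{(1)} = \mathbf{L}(H)$ strongly in $\mathcal{C}^{\rho-1}$. Iterating the same argument on $(\mathbf{L}f)^{(1)}$, whose right-hand side again has the structure of $\mathbf{L}$ applied to a $\mathcal{C}^{\sigma-1}([-1,1],\mathcal{C}^{\rho-1})$-valued expression, yields the $\mathcal{C}^{\sigma-1}$ regularity in $\epsilon$.

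The main obstacle I anticipate is not conceptual but organisational: carefully tracking the loss of one spatial derivative (justifying $\rho \ge 1$ as essential for the integration-by-parts step and for $\partial^\alpha f \in \mathcal{C}^{\rho-1}$ to make sense) and one temporal derivative (so that $f^{(1)}$ contributes at the correct level), and ensuring that the products of $\mathcal{C}^{N-1}([-1,1],\mathcal{C}^{r-1})$-valued coefficients with $\mathcal{C}^\sigma$- or $\mathcal{C}^{\sigma-1}$-valued functions land in the stated space $\mathcal{C}^{\sigma-1}([-1,1],\mathcal{C}^{\rho-1})$ thanks to $\sigma \le N$ and $\rho \le r$. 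Since these identities are precisely what \eqref{eq:1211}--\eqref{eq:1013b} anticipate, the proof is essentially bookkeeping once the duality-based derivation of $H_\epsilon$ is in place.
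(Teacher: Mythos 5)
Your proposal is correct in its essential mathematics but takes a genuinely different route than the paper. The paper works entirely in strong form: it uses that $T_0 \in \mathcal{N}^{r+1}$ and compactness of $\mathbb{T}^d$ to produce local inverse branches $(T_\epsilon)^{-1}_{\lambda,b}$ on a finite cover $\{U_\lambda\}$, factors $\mathbf{L}f$ as $\mathcal{M}_1(\epsilon, \mathcal{M}_2(\epsilon, f_\epsilon))$ where $\mathcal{M}_1$ is the sum over branches and $\mathcal{M}_2$ is division by $\det DT_\epsilon$, and then differentiates each factor directly (using the implicit relation $T_\epsilon \circ (T_\epsilon)^{-1}_{\lambda,b} = \mathrm{id}$ to compute $\partial_\epsilon (T_\epsilon)^{-1}_{\lambda,b}$ as $-(DT_\epsilon)^{-1} T^{(1)}_\epsilon$, and the quotient rule for $\mathcal{M}_2$). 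You instead use the transfer-operator duality \eqref{eq:pf_duality}: differentiate $\int (g\circ T_\epsilon)\, f_\epsilon\, dm$, apply the chain-rule identity $(\partial_l g)\circ T_\epsilon = \sum_k [(DT_\epsilon)^{-1}]_{kl}\,\partial_k(g\circ T_\epsilon)$, integrate by parts on $\mathbb{T}^d$, and push $g\circ T_\epsilon$ back across with $\mathcal{L}_{T_\epsilon}$. Your $H_\epsilon$ is the same as the expression the paper obtains (after reindexing), and your identification of the coefficients $J_{k,\alpha}$ and their regularity is accurate. What your route buys is that you never need to invoke the covering-space machinery or branch-wise well-definedness (the duality handles the preimage structure implicitly); what it costs is the delicate weak-to-strong upgrade — which you correctly flag — since the duality computation initially only identifies $(\mathbf{L}f)^{(1)}$ as a distribution, and you must argue via continuity-in-$\epsilon$ of $\mathbf{L}(H)$ into $\mathcal{C}^{\rho-1}$, a Fubini/fundamental-theorem-of-calculus identity, and the fact that pairing against $\mathcal{C}^\infty$ test functions separates points of $\mathcal{C}^{\rho-1}\hookrightarrow \mathcal{C}^0$. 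The paper's branch-wise argument sidesteps this step entirely because everything lives in $\mathcal{C}^{\rho-1}$ from the start. Both approaches give $(\mathbf{L}f)^{(1)}$ at the level $\mathcal{C}^0$ in $\epsilon$; as in the paper, the stated $\mathcal{C}^{\sigma-1}$ temporal regularity then follows from the formula \eqref{eq:1010} itself once the spatial regularities of the ingredients are tracked, as you propose to do by iteration — that final bookkeeping step is the same in either approach.
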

\begin{proof}
Observe that 
$
\det D_x T_\epsilon  >0$ for all $\vert \epsilon \vert \leq 1$ and $x\in \mathbb T^d$ or
$\det D_x T_\epsilon  <0$ for all $\vert \epsilon \vert \leq 1$ and $x\in \mathbb T^d$
because $T_\epsilon \in \mathcal N^{r+1}(\mathbb T^d,\mathbb T^d)$ for each $\vert \epsilon \vert \leq 1$ and $\epsilon \mapsto T_\epsilon$ is continuous.
We only consider the former case because the other case is similar.
Also, we only show \eqref{eq:1010} around $\epsilon =0$ to keep our notation simple (the general case can be literally treated).
We first note that there is $\epsilon _0 >0$, $B\in \mathbb N$, a finite covering $\{ U_\lambda \} _{\lambda \in \Lambda }$ of $\mathbb T^d$ and $\mathcal C^{r+1}$ maps $(T _\epsilon )^{-1}_{\lambda ,b} : U_\lambda \to (T _\epsilon )^{-1}_{\lambda ,b} (U_\lambda ) $ for $\vert \epsilon \vert <\epsilon _0$, $\lambda \in \Lambda$ and $b\in \{ 1,\ldots , B\}$ such that 
 for each $\vert \epsilon \vert <\epsilon _0$,  $\lambda \in \Lambda$, $b\in \{ 1,\ldots , B\}$  and $g\in \mathcal C^r(\mathbb T^d)$,
\begin{equation}\label{eq:1210a}
T _\epsilon \circ (T _\epsilon )^{-1}_{\lambda ,b} (x) = x\quad \text{on $ U_\lambda$}
\end{equation}
and 
\begin{equation}\label{eq:1210b}
\mathcal M_1(\epsilon , g)(x) := \sum _{T_\epsilon (y) =x} g(y) = \sum _{b=1}^B g\circ (T _\epsilon )^{-1}_{\lambda ,b} (x) \quad \text{on $ U_\lambda$}
\end{equation}
(because $T_0 \in \mathcal N^{r+1}(\mathbb T^d,\mathbb T^d)$ and $\mathbb T^d$ is compact, see Appendix \ref{a:pt} for detail).
Note also that if we define $\mathcal M_2: [-1,1]\times  \mathcal C^r(\mathbb T^d) \to \mathcal C^r(\mathbb T^d)$ by
\[
\mathcal M_2 (\epsilon , g) = \frac{g}{ \det DT_\epsilon }
\quad \text{for $\epsilon \in [-1,1]$, $g\in \mathcal C^r(\mathbb T^d)$,}
\]
then for each $f\in \mathcal C^\sigma ([-1,1],\mathcal C^r(\mathbb T^d))$ and $\vert \epsilon \vert <\epsilon _0$,
we have
\[
(\mathbf L f)_\epsilon   =\mathcal M_1(\epsilon , \mathcal M_2(\epsilon , f_\epsilon )). 
\]
Notice that both $\mathcal M_1$ and $\mathcal M_2$ are linear with respect to $g\in \mathcal C^r(\mathbb T^d)$.
Hence 
it follows from the chain rule for (Fr\'echet) derivatives that
\begin{equation}\label{eq:1211d}
(\mathbf L f)_\epsilon ^{(1)}  = \partial _\epsilon \mathcal M_1(\epsilon , \mathcal M_2(\epsilon , f_\epsilon )) +  \mathcal M_1\left(\epsilon , \partial _\epsilon \mathcal M_2(\epsilon , f_\epsilon ) + \mathcal M_2\left(\epsilon ,  f_\epsilon ^{(1)} \right)\right) 
\end{equation}
if the derivatives exists, where $\partial _\epsilon =\frac{\partial}{\partial \epsilon }$

Now we calculate $ \partial _\epsilon \mathcal M_1$ and $ \partial _\epsilon \mathcal M_2$. 
We first show that
\begin{equation}\label{eq:1210a2}
\partial _\epsilon \mathcal M_1(\epsilon , g)  =\mathcal M_1\left( \epsilon , -\sum _{l =1}^d \partial _{l} g \cdot \frac{\sum _{k=1}^d (\mathrm{adj} ( DT_\epsilon)) _{l , k} \cdot T_{(k),\epsilon } ^{(1)}  }{\det D T_\epsilon } \right), 
\end{equation}
where $\mathrm{adj} (A)$ is the adjugate matrix (i.e.~the transpose of the cofactor matrix) of a matrix $A$.
 By \eqref{eq:1211} (with $\epsilon \mapsto \mathcal M_1 (\epsilon , g)$ in place of $f$), \eqref{eq:1210b} and  the chain rule for  derivatives, 
  we have
\begin{equation}\label{eq:1210a3}
\partial _\epsilon  \mathcal M_1 (\epsilon , g)(x)
=  \partial _\epsilon \widetilde{  \mathcal M}_{1,g} (\epsilon , x)
= \sum _{b=1}^B \sum _{l =1}^d \partial _{l} g \circ  (T _\epsilon )^{-1} _{b,\lambda}(x) \cdot 
\partial _\epsilon ((T _\epsilon )^{-1}_{b,\lambda})_{(l)} (x), \quad x\in U_\lambda ,
\end{equation}
 where $((T _\epsilon )^{-1}_{b,\lambda} )_{(l)}(x)$ is the $l$-th coordinate of $(T _\epsilon )^{-1}_{b,\lambda} (x)$ and $\widetilde{  \mathcal M}_{1,g} (\epsilon , x) =  \mathcal M _1(\epsilon , g)(x)$.
On the other hand, by differentiating the $l$-th coordinate of \eqref{eq:1210a} for $1\leq l \leq d$ we have
\[
T_{(\ell ) ,\epsilon }^{(1)} (y)  + \sum _{k =1}^d \partial _{k } T_{(\ell ),\epsilon  }(y)  \cdot \partial _\epsilon ((T _\epsilon )^{-1}_{b,\lambda} )_{(k)} (x)=0, \quad y=(T _\epsilon )^{-1}_{b,\lambda} (x), \; x\in U_\lambda .
\]
In the matrix form (under the identification of $\mathbb T^d$  with $\mathbb R^d$), this can be written as 
\[
  T^{(1)}_\epsilon (y)  + D_y T_{\epsilon  }   \left[ \partial _\epsilon (T _\epsilon )^{-1}_{b,\lambda}(x) \right] =0, \quad y =(T _\epsilon )^{-1}_{b,\lambda} (x), \; x\in U_\lambda,
\]
where we see $ T^{(1)}_\epsilon (y)$ and $ \partial _\epsilon (T _\epsilon )^{-1}_{b,\lambda} (x)$ as column vectors.
Thus, since $ A^{-1} = (\det A)^{-1} \mathrm{adj}(A)$  for any invertible matrix $A$, we have
\begin{equation}\label{eq:1210a4}
 \partial _\epsilon (T _\epsilon )^{-1}_{b,\lambda}(x)  
 =- (\det D_yT_\epsilon )^{-1} \mathrm{adj}(D_yT_\epsilon )    \left[  T^{(1)}_\epsilon (y) \right], \quad y =(T _\epsilon )^{-1}_{b,\lambda} (x), \; x\in U_\lambda.
\end{equation}
\eqref{eq:1210a2} immediately follows from \eqref{eq:1210a3} and  \eqref{eq:1210a4}.
Furthermore,
by the quotient rule for derivatives and \eqref{eq:1013b} we have
\begin{equation}\label{eq:1211b}
\partial _\epsilon \mathcal M_2 (\epsilon , g)  =-\frac{g  \cdot \det D T^{(1)}_{\epsilon } }{(\det D T_{\epsilon } )^2}
\end{equation}
and
\begin{equation}\label{eq:1211c}
\partial _l \mathcal M_2 (\epsilon , g) = \frac{\partial _l g}{ \det DT_\epsilon } - \frac{g\cdot \partial _l(\det DT_\epsilon)}{ (\det DT_\epsilon)^2 }.
\end{equation}
The conclusion immediately follows from \eqref{eq:1211d}, \eqref{eq:1210a2}, \eqref{eq:1211b}  and \eqref{eq:1211c}.
\end{proof}

Now we complete the proof of Proposition \ref{prop:1209a}.
We first consider the case when $f\in \mathcal C^r(\mathbb T^d)$.
We will show by induction that for each $1\leq k \leq j$,  $(\mathbf L f)^{(k)}$ exists and is of the form
\begin{equation}\label{eq:1014d2}
(\mathbf L f)^{(k)} = \mathbf L \left(\sum _{\vert \alpha \vert \leq k} \widehat J_{k,\alpha } \cdot \partial ^\alpha f    \right), 
\end{equation}
where $\widehat J_{k,\alpha }$ is  in $\mathcal C^{N-k}([-1,1],\mathcal C^{r-k} (\mathbb T^d))$ 
being a polynomial function of  $\partial ^\beta   T_{(l )}^{(k')}$ ($1\leq l \leq d$, $0\leq k'\leq k$, $\vert \beta \vert \leq k+1$) and $( \det DT)^{-1}$
 for each multi-index $\alpha $ with $\vert \alpha\vert \leq k$.
 \eqref{eq:1014d2} for $k=1$ is an immediate consequence of Proposition \ref{prop:2.1} (notice that $f$ in Proposition \ref{prop:2.1} depended on $\epsilon$ while $f$ here does not).
Suppose that $k\geq 2$ and \eqref{eq:1014d2} holds with $k-1$ instead of $k$.
 Then, by Proposition \ref{prop:2.1}, we have
\begin{align*}
(\mathbf L f)^{(k )} = 
\mathbf L \left(\sum _{\vert \alpha \vert \leq 1} J_{ 0,\alpha } \cdot \partial ^\alpha \left(\sum _{\vert \beta \vert \leq k-1 } \widehat J_{k-1 ,\beta } \cdot \partial ^\beta f \right)  
+  \sum _{\vert \alpha \vert \leq 1} J_{1, \alpha } \cdot \partial ^\alpha  \left(\sum _{\vert \beta \vert \leq k -1} \widehat J_{k-1 ,\beta }^{(1)} \cdot \partial ^\beta f    \right)  \right).
\end{align*}
Therefore,  \eqref{eq:1014d2} immediately follows from \eqref{eq:1013b2} and \eqref{eq:1013b}.
Furthermore, $\epsilon \mapsto \frac{d^j}{d\epsilon ^j} \mathcal L_{T_\epsilon } f = (\mathbf L f) ^{(j)}_\epsilon $ exists as an element of $\mathcal C^{0}([-1,1], E_{i-j})$ by (S3) and the fact that $s-j\geq 0$.

We next consider the general case, i.e.~the case when $f\in E_i$. 
By (S1), one can find $\{ f_n\} _{n\geq 1} \subset \mathcal C^r(\mathbb T^d)$ such that $\Vert f -f_n\Vert _{E_i} \to 0$ as $n\to \infty$.
By the result in the previous paragraph, $ (\mathbf L f_n)^{(k)}$ exists as an element of a Banach space $\mathcal C^0([-1,1],  E_{i-k})$  for all $1\leq k\leq j$.
On the other hand,  it follows from (S3), (S4), \eqref{eq:1209b} and \eqref{eq:1014d2} that
\[
\sup _{\epsilon \in [-1,1]}\left\Vert (\mathbf L f_n)^{(k)}_\epsilon - (\mathbf L f_m)^{(k)}_\epsilon\right\Vert _{E_{i-k}}
\leq C\sum _{\vert \alpha \vert \leq k} \sup _{\epsilon \in [-1,1]}\left\Vert \widehat J_{k,\alpha ,\epsilon }\right\Vert _{\mathcal C^{r-k}} \left\Vert f_n - f_m\right\Vert _{E_i} \to 0
\]
as $n, m\to \infty$.
In particular,
 $\lim _{n\to \infty} (\mathbf Lf_n)^{(j)} $ exists.
 In a similar manner, we can show that $\mathbf L$ is a bounded operator from $\mathcal C^0([-1,1], E_i)$ to $\mathcal C^j([-1,1], E_{i-j})$, so that 
 $\lim _{n\to \infty} (\mathbf Lf_n)^{(j)} =(\mathbf L f)^{(j)}$ in $\mathcal C^0([-1,1],  E_{i-j})$.
In conclusion,
$\mathbf L f  : \epsilon \mapsto  \mathcal L_{T_\epsilon } f$ is in  $\mathcal C^j([-1,1],E_{i-j})$.
\end{proof}

\subsection{Random Anosov maps}
Let $M$ be a compact, connected $\mathcal{C}^\infty$ Riemannian manifold with dimension $d$.
In this section we consider random dynamical systems consisting of Anosov maps lying in a small $\mathcal{C}^{r+1}(M,M)$-neighbourhood of a fixed, topologically transitive Anosov diffeomorphism $T \in \mathcal{C}^{r+1}(M,M)$ for some $r \ge 1$.
The setting we consider is very similar to that of \cite[Section 4]{dragivcevic2020statistical}, however we shall obtain more general conclusions than those of \cite{dragivcevic2020statistical}.
For the remainder of this section we shall fix a topologically transitive Anosov diffeomorphism $T \in \mathcal{C}^{r+1}(M,M)$.
Recall that $(\Omega, \mathcal{F}, \mathbb{P})$ is a Lebesgue space and  $\sigma : \Omega \to \Omega$ is a measurably  invertible, ergodic, measure-preseving map.
For every $\eta > 0$ we define
\begin{equation*}
  \mathcal{O}_\eta(T) = \{ S \in \mathcal{C}^{r+1}(M,M) : d_{\mathcal{C}^{r+1}}(S,T) < \eta \}.
\end{equation*}
Recall that if $\eta$ is sufficiently small then $\mathcal{O}_\eta(T) \subset \mathcal{N}^{r+1}(M,M) $  and every $S \in \mathcal{O}_\eta(T)$ is an Anosov diffeomorphism.
A map $\mathcal{T} : \Omega \to \mathcal{C}^{r+1}(M,M)$ will be said to be measurable if it is $(\mathcal{F}, \mathcal{B}_{\mathcal{C}^{r+1}(M,M)})$-measurable.

We consider random dynamical systems induced by measurable maps $\mathcal{T} : \Omega \to \mathcal{O}_\eta(T)$ for some small, fixed $\eta$, over $\sigma$. Our main result for this subsection concerns the stability properties of the equivariant family of probability measures associated to such  systems.
We will formulate our result in the setting developed by Gou{\"e}zel and Liverani in \cite{gouezel2006banach}.
In particular, in \cite{gouezel2006banach} it is shown that when a  topologically transitive Anosov map is smoothly perturbed the SRB measure varies with similar regularity in certain anisotropic Banach spaces.

A small technical comment is required before proceeding: in \cite{gouezel2006banach} the usual metric on $M$ is replaced by an adapted metric for $T$ (which will also be adapted for $S \in \mathcal{O}_\eta(T)$ provided that $\eta$ is sufficiently small); we shall do the same here. 
We denote by $m$ the Riemannian probability measure induced by the adapted metric on $M$.
For each $q \geq  0$, $p \in \N _0$ with $p \le r$ one obtains a space $B_{p,q}(T)$ by taking the completion of $\mathcal{C}^r(M)$ with respect to anisotropic norms\footnote{Actually, the norms in \cite[Section 3]{gouezel2006banach} are defined on the real Banach space $\mathcal{C}^r(M,\R)$. Here we consider the complexification, which is of no consequence.} $\norm{\cdot}_{p,q}$ as defined in \cite[Section 3]{gouezel2006banach}.
Since the map $T$ is fixed we will just write $B_{p,q}$ in place of $B_{p,q}(T)$.
Our main result for this subsection is the following.
\begin{theorem}\label{thm:quenched_response_anosov}
Let $N, p\in \mathbb N$ and $q\geq 0$ satisfying that $p+q < r-N$.
Then there exists $\eta_0 > 0$ such that every measurable $\mathcal{T} : \Omega \to \mathcal{O}_{\eta_0}(T)$ has an equivariant measurable family of Radon probability measures $\{ \mu ^{\mathcal{T} } _{\omega } \} _{\omega \in \Omega}$ and  $h_{\mathcal T} \in L^\infty(\Omega,  B_{p+N,q})$ such that 
  $h_{\mathcal T}(\omega )(g) =\int g d\mu ^{\mathcal{T} } _{\omega }$ for each $g\in \mathcal C^\infty (M)$ and almost every $\omega$.
  In addition, if $\{ \mathcal{T}_\epsilon : \Omega \to \mathcal{O}_{\eta_0}(T)\}_{\epsilon \in [-1,1]}$ is a family of measurable maps such that
   there is a bounded subset $\mathcal K$ of\footnote{Recall that $\mathcal{C}^{r+1}(M,M)$ is a $\mathcal{C}^{r+1}$ Banach manifold and so for $k \le r+1$ we may talk of $\mathcal{C}^k$ curves taking values in $\mathcal{C}^{r+1}(M,M)$.} $\mathcal{C}^N([-1,1],\mathcal{C}^{r+1}(M,M))$
    satisfying that 
   $\epsilon \mapsto \mathcal{T}_\epsilon(\omega)$  lies in $\mathcal K$ for almost every $\omega$, then  the map $\epsilon \mapsto h_{\mathcal{T}_\epsilon }( \omega )$ is in $\mathcal{C}^{N-1}([-1,1], B_{p, q+N})$ for almost every $\omega$.
\end{theorem}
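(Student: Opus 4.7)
The plan is to apply Theorem~\ref{thm:quenched_linear_response} to the family of Perron--Frobenius operator cocycles $\{(\mathcal{L}_{\mathcal{T}_\epsilon},\sigma)\}_{\epsilon\in[-1,1]}$ on the Gou\"ezel--Liverani anisotropic Banach spaces, choosing
\begin{equation*}
  E_j = B_{p+N-j,\,q+j}, \qquad j \in \{0,1,\dots,N\},
\end{equation*}
so that $E_0 = B_{p,q+N}$ and $E_N = B_{p+N,q}$. The key input is \cite{gouezel2006banach}, which supplies, for sufficiently small $\eta_0$: the continuous inclusions $E_j \hookrightarrow E_{j-1}$, density of $\mathcal{C}^r(M)$, the identification $B_{p',q'} \hookrightarrow (\mathcal{C}^\infty(M))^*$, and, most importantly, the uniform Lasota--Yorke inequalities for $\mathcal{L}_S$ with $S\in \mathcal{O}_{\eta_0}(T)$ together with the quasi-compactness and the spectral gap of $\mathcal{L}_T$ on each $B_{p+N-j,q+j}$. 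The hypothesis $p+q<r-N$ is exactly what is needed to keep all the relevant norms within the admissible range of \cite{gouezel2006banach}.

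I would proceed through the following steps. First, check (QR0): the embedding and density conditions come from the Gou\"ezel--Liverani construction, $\xi = m$ is a non-zero continuous linear functional on every $E_j$ (by property \ref{en:S2}), and $\mathcal{L}_S$ is $m$-Markov by \eqref{eq:pf_duality}; strong measurability of $\omega\mapsto \mathcal{L}_{\mathcal{T}_\epsilon(\omega)}$ follows from Propositions~\ref{prop:cont_strong_op_topology} and~\ref{prop:measurable_extension}. Second, verify (QR1)--(QR3): uniform boundedness and the uniform Lasota--Yorke bound $\|\mathcal{L}_S^n f\|_{E_1}\le C\alpha^n\|f\|_{E_1} + CM^n\|f\|_{E_0}$ hold $\omega$-uniformly because $\mathcal{T}_\epsilon(\omega) \in \mathcal{O}_{\eta_0}(T)$ almost surely, and the constants in Gou\"ezel--Liverani depend only on a $\mathcal{C}^{r+1}$-neighbourhood of $T$. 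Third, verify (QR4) and (QR5) by appealing to Corollary~\ref{cor:1211}: the boundedness of $\mathcal{K}\subset \mathcal{C}^N([-1,1],\mathcal{C}^{r+1}(M,M))$ gives uniform bounds on $\partial^\beta T_{(l)}^{(k)}$, so the quantities $J_{k,\alpha}$ appearing in Lemma~\ref{prop:2.1} and iterated in \eqref{eq:1014d2} are essentially bounded; (S1)--(S5) then hold and (QR4), (QR5) follow (on a general manifold one reduces to the torus case via a partition of unity as indicated after Proposition~\ref{prop:1209a}, cf.\ \cite{gouezel2006banach,baladi2018dynamical}).

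Fourth, establish the $m$-mixing of $(\mathcal{L}_{\mathcal{T}_0},\sigma)$ on both $E_1$ and $E_N$. Since $T$ is topologically transitive Anosov, the classical theory gives a spectral gap for $\mathcal{L}_T$ on each $B_{p',q'}$; by the Keller--Liverani stability theorem applied uniformly for $S\in \mathcal{O}_{\eta_0}(T)$ (shrinking $\eta_0$ if necessary), every $\mathcal{L}_S$ has a simple leading eigenvalue $1$ with the remainder of its spectrum in a disk of radius $\rho<1$, with $\omega$-uniform bounds on spectral projections. From this $\omega$-uniform spectral gap the required $\omega$-uniform exponential contraction on $\ker m$ follows exactly as in \cite[Proposition~3.11]{crimmins2019stability} (or Proposition~1 of \cite{dragivcevic2020statistical}). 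Fifth, with all hypotheses verified, Theorem~\ref{thm:quenched_linear_response} produces a unique $\xi$-normalised fixed point $h_{\mathcal{T}_\epsilon}\in L^\infty(\Omega,E_1) = L^\infty(\Omega,B_{p+N-1,q+1})$ and, by working with $E_N = B_{p+N,q}$ in the lift picture (the fixed point actually lies in $\mathcal{E}_N$ as in the deterministic argument), membership in $L^\infty(\Omega,B_{p+N,q})$. Proposition~\ref{prop:existence_equivariant_measure} then converts $h_{\mathcal{T}_\epsilon}(\omega)$ into a Radon probability measure $\mu^{\mathcal{T}_\epsilon}_\omega$, and equivariance follows from the computation at the end of Subsection~4.1. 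The Taylor expansion in $E_0 = B_{p,q+N}$ given by \eqref{eq:quenched_linear_response_1} yields the claimed $\mathcal{C}^{N-1}$ regularity of $\epsilon\mapsto h_{\mathcal{T}_\epsilon}(\omega)$.

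The main obstacle I expect is the $\omega$-uniform mixing of $(\mathcal{L}_{\mathcal{T}_0},\sigma)$ on the pair $(E_1,E_N) = (B_{p+N-1,q+1},B_{p+N,q})$: one must upgrade the pointwise (in $\omega$) spectral gap coming from GKL/Keller--Liverani applied to each $\mathcal{L}_{\mathcal{T}_0(\omega)}$ to an $\omega$-uniform exponential decay on $\ker m$, and this requires both the Lasota--Yorke constants and the size of the spectral gap to be uniform in $\omega$. This is exactly what the stability-of-spectrum results cited above deliver provided $\eta_0$ is chosen small enough, so the argument closes after a careful bookkeeping of the neighbourhood-dependent constants. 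All remaining verifications are essentially routine translations between the anisotropic-norm estimates of \cite{gouezel2006banach} and the abstract conditions (QR0)--(QR5).
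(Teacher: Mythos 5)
Your proposal is essentially the paper's proof: same scale of anisotropic spaces (though your stated formula $E_j = B_{p+N-j,\,q+j}$ is internally inconsistent with the endpoints you assert — the intended choice is $E_j = B_{p+j,\,q+N-j}$, which does give $E_0 = B_{p,q+N}$ and $E_N = B_{p+N,q}$), same verification of (QR0)--(QR5) via Propositions~\ref{prop:measurable_extension}, \ref{prop:existence_equivariant_measure} and Corollary~\ref{cor:1211}, and the same appeal to Theorem~\ref{thm:quenched_linear_response}. The only substantive divergence is in how $\omega$-uniform $m$-mixing is obtained: the paper imports the sequential contraction on $\ker m$ directly from Conze--Raugi (Proposition~\ref{prop:gl_seq}(3)), whereas you propose to derive it from the spectral gap of the fixed map $\mathcal{L}_T$ plus cocycle-stability results such as \cite[Proposition~3.11]{crimmins2019stability}; your wording slightly conflates ``each $\mathcal{L}_S$ has a spectral gap'' with ``the cocycle products contract on $\ker m$'' (these are not equivalent in general), but the references you invoke do carry out the correct perturbation argument from the constant unperturbed cocycle, so this route also closes.
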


We will use Theorem \ref{thm:quenched_linear_response}   to prove Theorem \ref{thm:quenched_response_anosov},  with help of Proposition \ref{prop:measurable_extension}, Proposition \ref{prop:existence_equivariant_measure} and Corollary \ref{cor:1211}.
Therefore, what we should do is to check (S1)-(S5), (QR0), (QR2) and (QR3) for appropriate Banach spaces.
We start with the basic properties of the $B_{p,q}$ spaces from \cite{gouezel2006banach}.

\begin{enumerate}
 \item 
 By definition of $\Vert \cdot \Vert _{p,q}$, 
it is straightforward to see that
 $\Vert \partial _l f \Vert _{p,q} \leq \Vert f\Vert _{p+1,q-1}$ for each $f\in \mathcal C^{r}(M)$ and $1\leq l\leq d$. Furthermore, $B_{p+1,q-1} \into B_{p,q}$.
    \item \cite[Lemma 2.1]{gouezel2006banach} If $p+q <r$ then the unit ball in $B_{p+1,q-1}$ is relatively compact in $B_{p,q}$.
    \item \cite[Lemma 3.2]{gouezel2006banach} $\Vert uf \Vert _{p,q} \leq C\Vert u\Vert _{\mathcal C^{p+q}} \Vert f\Vert _{p,q}$ for each $u \in \mathcal C^{p+q}(M)$ and $f\in \mathcal C^{r}(M)$. In particular, if $p+q <r$ then $\mathcal{C}^r(M) \into B_{p,q}$ (see also \cite[Remark 4.3]{gouezel2006banach}).
  \item \cite[Proposition 4.1]{gouezel2006banach} We have $B_{p,q} \into (\mathcal C^q(M) )^*$. Specifically, for each $h \in \mathcal{C}^r(M)$ one obtains a distribution $\tilde{h} \in (\mathcal C^q(M) )^*$ defined by $\tilde{h}(g) = \intf h g dm$.
  The map $h \mapsto \tilde{h}$ continuously extends from $\mathcal{C}^r(M)$ to $B_{p,q}$ and yields the required inclusion.
\end{enumerate}
We also remark that  there exists injections $B_{p,q} \to B_{p-1, q}$ and $B_{p,q} \to B_{p,q'}$ for $q' > q$ due to \cite[Remark 4.2]{gouezel2006banach}.
By the fourth item of the above list, the functional $h \mapsto \intf h dm$ on $\mathcal{C}^r(M)$ extends to a continuous functional on $B_{p,q}$, which we shall also denote by $m$.
The following result summarises some facts from \cite{gouezel2006banach} and \cite{conze2007limit} pertaining to the boundedness and mixing of the Perron-Frobenius operator associated to maps in $\mathcal{O}_\eta(T)$ for $\eta$ small.
We refer the reader to \cite[Lemma 2.2]{gouezel2006banach} and the discussion at the beginning of \cite[Section 7]{gouezel2006banach} for the first and second items, and \cite[Proposition 2.10]{conze2007limit}  for the third item (see also \cite[Section 3]{dragivcevic2020spectral}).
\begin{proposition}\label{prop:gl_seq}
  There exists $0< \eta _0\leq \eta$ such that for any $p \in \N _0$ and $q \geq 0$ with $p+q < r$ we have:
  \begin{enumerate}
    \item For every sequence $\{T_i\}_{i \in \N} \subseteq \mathcal{O}_{\eta _0}(T)$ and $n \in \N$ we have
    \begin{equation*}
      \norm{\LL_{T_{n}} \circ \cdots \circ \LL_{T_{1}}}_{L(B_{p,q})} \le C_{p,q}.
    \end{equation*}
    \item 
    There exists $\alpha_{p,q} \in [0,1)$ such that for every $\{T_i\}_{i \in \N} \subseteq \mathcal{O}_{\eta _0}(T)$, $n \in \N$ and $f \in B_{p+1,q}$ we have
    \begin{equation*}
      \norm{(\LL_{T_{n}} \circ \cdots \circ \LL_{T_{1}}) f }_{p+1,q} \le C_{p,q} \alpha_{p,q}^n \norm{f }_{B_{p+1,q}} + C_{p,q} \norm{f }_{B_{p,q+1}}.
    \end{equation*}
    \item   There exists a constant $\lambda_{p,q} \in [0,1)$ such that for every sequence $\{T_i\}_{i \in \Z} \subseteq \mathcal{O}_{\eta_2}(T)$ and $n \in \N$ we have
  \begin{equation*}
    \norm{\restr{\LL_{T_{n}} \circ \cdots \circ \LL_{T_{1}}}{V_{p,q}}}_{L(B_{p,q})} \le C_{p,q}\lambda_{p,q}^n,
  \end{equation*}
  where $  V_{p,q} = \ker\left(\restr{m}{B_{p,q}}\right) = \left\{h \in B_{p,q} \mid m(h) = 0 \right\}$.
  \end{enumerate}
\end{proposition}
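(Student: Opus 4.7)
The plan is to shrink $\eta_0 \in (0,\eta]$ so that $\mathcal{O}_{\eta_0}(T)$ admits a common hyperbolic structure, and then to import the arguments of \cite[Lemma~2.2]{gouezel2006banach} and \cite[Proposition~2.10]{conze2007limit}, recording at each step that all constants are uniform over the neighbourhood. Concretely, I would first choose $\eta_0$ so small that every $S \in \mathcal{O}_{\eta_0}(T)$ is an Anosov diffeomorphism whose stable/unstable cone fields are contained in a single slightly enlarged pair of cone fields around those of $T$, with hyperbolic expansion and contraction rates close to those of $T$. Under this choice the cone family is strictly forward invariant under $DS^{-1}$ and strictly backward invariant under $DS$ for every $S \in \mathcal{O}_{\eta_0}(T)$, hence also under any finite composition of such maps.

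With this uniform cone structure, items (1) and (2) are obtained together by running the proof of the one-step Lasota--Yorke estimate in \cite[Lemma~2.2]{gouezel2006banach} for a single $\LL_S$ and checking that the constants depend only on the cone family, on the uniform bounds for $S$ in $\mathcal{C}^{r+1}(M,M)$, and on $p$, $q$. The key geometric input is that if $W$ is a cone-admissible leaf, then so is $S^{-1}(W)$, and the contraction along $W$ is controlled by the uniform hyperbolic rates. Iterating the resulting one-step bound along a finite sequence $T_1,\dots,T_n \in \mathcal{O}_{\eta_0}(T)$ gives at once
\[
\norm{\LL_{T_n}\circ\cdots\circ\LL_{T_1} f}_{p,q} \le C_{p,q}\norm{f}_{p,q}
\]
(item (1)) and
\[
\norm{\LL_{T_n}\circ\cdots\circ\LL_{T_1} f}_{p+1,q} \le C_{p,q}\alpha_{p,q}^n\norm{f}_{p+1,q} + C_{p,q}\norm{f}_{p,q+1}
\]
(item (2)), where $\alpha_{p,q}<1$ is determined by the uniform hyperbolic rates raised to the weight $p+1$.

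Item (3) is the deeper statement and constitutes the main obstacle. The strategy has two stages. For the fixed, unperturbed map $T$, combine item (2) with the compact inclusion $B_{p+1,q}\hookrightarrow B_{p,q+1}$ (which follows from \cite[Lemma~2.1]{gouezel2006banach} through the chain $B_{p+1,q}\hookrightarrow B_{p+1,q-1}\hookrightarrow B_{p,q}$ together with the embeddings $B_{p,q}\hookrightarrow B_{p,q'}$ for $q'>q$) to deduce that $\LL_T$ is quasi-compact on $B_{p,q}$; then topological transitivity of $T$, positivity of $\LL_T$, and the $m$-Markov property give, via a standard Perron--Frobenius/Hennion-type argument, that $1$ is a simple eigenvalue with no other peripheral spectrum, so that $\rho(\LL_T|_{V_{p,q}})<1$.

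The second stage is to promote this single-map spectral gap to the sequential statement by the argument of \cite[Proposition~2.10]{conze2007limit}, which is a non-autonomous variant of Keller--Liverani stability: using the uniform Lasota--Yorke inequality from item (2), plus $\norm{\LL_S-\LL_T}_{L(B_{p+1,q},B_{p,q+1})}\to 0$ as $S\to T$ (obtained by shrinking $\eta_0$ further and invoking Proposition~\ref{prop:cont_strong_op_topology} upgraded to an operator-norm estimate between these anisotropic spaces), one shows that finite compositions of operators from $\{\LL_S:S\in\mathcal{O}_{\eta_0}(T)\}$ inherit an exponential decay on $V_{p,q}$ at some rate $\lambda_{p,q}\in(\rho(\LL_T|_{V_{p,q}}),1)$. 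The subtlety that makes this go through cleanly is that each $\LL_S$ is $m$-Markov, so every $\LL_S$ preserves $V_{p,q}=\ker m$; consequently $V_{p,q}$ is automatically invariant under arbitrary compositions $\LL_{T_n}\circ\cdots\circ\LL_{T_1}$, and the sequential Keller--Liverani argument can be applied to the restrictions to a fixed invariant subspace rather than having to track a drifting invariant complement. The hardest part of the write-up is verifying the continuity $\norm{\LL_S-\LL_T}_{L(B_{p+1,q},B_{p,q+1})}\le \omega(d_{\mathcal{C}^{r+1}}(S,T))$ with $\omega(\cdot)\to 0$, which requires a careful comparison of the defining integrals of the $B_{p,q}$ norms along the perturbed stable and unstable foliations.
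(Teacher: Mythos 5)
Your proposal reconstructs essentially the argument the paper invokes by citation: items (1) and (2) are the uniform Lasota--Yorke estimates of \cite[Lemma~2.2 and Section~7]{gouezel2006banach}, and item (3) is the non-autonomous Keller--Liverani stability of \cite[Proposition~2.10]{conze2007limit}, applied to the common invariant subspace $V_{p,q}=\ker m$; the paper itself proves the proposition purely by these references, so your plan matches its route. One slip worth fixing: you write $B_{p+1,q}\hookrightarrow B_{p+1,q-1}$, but the continuous inclusion runs the other way (it is $B_{p,q}\hookrightarrow B_{p,q'}$ for $q'>q$, and $B_{p,q}\hookrightarrow B_{p-1,q}$); the compact inclusion you actually need, $B_{p+1,q}\hookrightarrow B_{p,q+1}$, is a single direct application of \cite[Lemma~2.1]{gouezel2006banach} with $q+1$ in place of $q$, so the conclusion of your stage one is unaffected.
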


Fix $q \geq 0$ and $p \in \mathbb N$ with $p+q < r-N$.
Let $E_{j} = B_{p+ j, q + N-j}$, $j \in \{0, \ldots, N\}$.
Then, the conditions (S1)-(S4) on theses Banach spaces  immediately follow 
from 
 the above list (recall that each $E_j$ is the completion of $\mathcal C^r(M)$ with respect to $\Vert \cdot \Vert _{E_j}$).
 Furthermore, fix  a bounded subset $\mathcal K$ of $\mathcal{C}^N([-1,1],\mathcal{C}^{r+1}(M,M))$ and let
$\{ \mathcal{T}_\epsilon : \Omega \to \mathcal{O}_{\eta_0}(T)\}_{\epsilon \in [-1,1]}$ be a family of measurable maps such that
   $\epsilon \mapsto \mathcal{T}_\epsilon(\omega)$  lies in $\mathcal K$ for almost every $\omega$.
Then, by virtue of Proposition \ref{prop:measurable_extension}, the first part of Proposition  \ref{prop:gl_seq}  and the above list, 
 Perron-Frobenius operator cocycles $\{ (\mathcal L_{\mathcal T_\epsilon } , \sigma )\} _{\epsilon \in [-1,1]}$ associated with the random dynamics $\{ (\mathcal T_\epsilon ,\sigma )\} _{\epsilon \in [-1,1]}$ are precisely defined on these Banach spaces
and
they satisfy (S5) and (QR0) with $\xi =m$ (see the remark following (s2)) except the $m$-mixing property.  
In fact, (QR2), (QR3) and the mixing of $\{ ( \LL_\mathcal{T_\epsilon }, \sigma )\}_{\epsilon \in [-1,1]}$ on $E_j$ for $j\in \{ 1, N\}$ are  consequences of each items of Proposition \ref{prop:gl_seq}, respectively.
By Corollary \ref{cor:1211}, (QR1), (QR4) and (QR5) also hold for $\{ ( \LL_\mathcal{T_\epsilon }, \sigma )\}_{\epsilon \in [-1,1]}$ on $E_j$
(see also \cite[Lemma 7.1]{gouezel2006banach} and
\cite[Section 9]{gouezel2006banach}).

By Proposition \ref{prop:existence_equivariant_measure} (and the remark following it), 
for each $\epsilon \in [-1,1]$,
 $\mathcal{T} _\epsilon$ has an equivariant measurable family of Radon probability measures $\{ \mu ^{\mathcal{T} _\epsilon } _{\omega } \} _{\omega \in \Omega}$ and  $h_{\mathcal T_\epsilon } \in  L^\infty(\Omega,  E_N) 
 = L^\infty(\Omega,  B_{p+N-1,q})$
  such that 
  $h_{\mathcal T_\epsilon }(\omega )(g) =\int g \mathrm{d}\mu ^{\mathcal{T} _\epsilon} _{\omega }$ for each  $g\in \mathcal C^\infty (M)$ and almost every $\omega$.
Furthermore, we apply Theorem \ref{thm:quenched_linear_response} to deduce the claim that $\epsilon \mapsto h_{\mathcal T_\epsilon }(\omega )$ is in $\mathcal{C}^{N-1}([-1,1], E_0) =\mathcal{C}^{N-1}([-1,1], B_{p-1,q+N})$ for almost every $\omega$, which completes the proof of Theorem \ref{thm:quenched_linear_response}.

\subsection{Random U(1) extensions of expanding maps}
In this section, we will apply Theorem \ref{thm:quenched_linear_response} to 
 quenched linear  response problems  for random U(1) extensions of expanding maps.
Let $\mathcal U $ be the set of $\mathcal C ^{\infty}$  endomorphisms $T: \mathbb T^2 \to \mathbb T^2$ on the torus $\mathbb{T} ^2=\mathbb R^2/ \mathbb Z^2$
  of the form 
\begin{equation}\label{eq:unperturbedsystem}
T:\binom{x}{s}\mapsto
\left( \! \! \begin{array}{cc} E(x) & \\ s+\frac{1}{2\pi}\tau(x) & \!\! \mod 1\end{array} \! \! \right),
\end{equation}
where $E: \mathbb S^1\to \mathbb S^1$ is a $\mathcal C^\infty$ orientation-preserving endomorphism
 on the circle $\mathbb S^1 =\mathbb R/ \mathbb Z$
  and 
$\tau: \mathbb S^1 \rightarrow \mathbb{R}$ is a $\mathcal C^\infty$ function 
($T$ is called the {\it U(1) extension} of $E$ over $\tau $).
U(1) extensions of expanding maps   can be seen as a toy model of (piecewise) hyperbolic flows such as geodesic flows on negatively curved manifolds or dispersive billiard flows (via suspension flows of hyperbolic maps; see \cite{Hasselblatt2017,PP1990}), 
and has been intensively studied by several authors (see e.g.~\cite{Dolgopyat2002,faure2011semiclassical,nakano2015spectra,nakano2016partial,BE2017,FW2017,chen2018spectral}).
When we want to emphasise the dependence of $E$ and $\tau$ in \eqref{eq:unperturbedsystem} on $T$, we write them as $E_T$ and $\tau _T$. 
Fix $T\in \mathcal U$ and 
assume that $E$ is  an expanding map on $\mathbb S^1$ in the sense that
$\min_{x\in \mathbb S^1} E'(x)>1$.
Let $r$ be a positive integer.
For every $\eta >0$ we define 
\[
\mathcal O_\eta (T) =\{ S \in \mathcal U  \mid  d_{\mathcal{C}^{r+1 }}(S,T) < \eta \}.
\]
Note that $\mathcal U\subset \mathcal N^{r+1}(\mathbb T^2,\mathbb T^2)$,
 and that if $\eta$ is sufficiently small then  
 $E_S$ is an expanding map for every $S \in \mathcal{O}_\eta(T)$.

Recall that $(\Omega ,  \mathcal F, \mathbb P)$ is a Lebesgue space and $\sigma :\Omega \to \Omega$ is a measurably invertible, ergodic, $\mathbb P$-preserving map on $(\Omega ,  \mathcal F, \mathbb P)$.
When $\tau _T(x) =\alpha$ for any $x\in \mathbb S^1$ with some constant $\alpha$, then obviously $T$ does not admit any mixing physical  measure because the rotation $s\mapsto s+\frac{1}{2\pi}\alpha$ mod 1 has no mixing physical measure. 
However,   it is known that if $\tau $ satisfies a generic condition, called the \emph{partial captivity} condition, then $T$ admits a unique absolutely continuous invariant probability measure for which  correlation functions  of $T$ decay exponentially fast (in particular, $T$ is mixing).
The partial captivity condition was first introduced by Faure \cite{faure2011semiclassical} and proven to be  generic in \cite{nakano2016partial}.
Furthermore, it was shown in \cite[Theorem 1.6]{nakano2015spectra} that if $T$ satisfies the partial captivity condition, then there is an $\eta _0>0$ and an $m_0\in \mathbb N$, only depending on $T$ (see the comment above Proposition \ref{prop:1218} for more precise choice of $\eta _0$ and $m_0$), such that if $r\geq m_0$ then for any measurable map $\mathcal T:\Omega \to \mathcal O_{\eta _0}(T)$, the RDS $(\mathcal T, \sigma )$ induced by $\mathcal T$ over $\sigma$  admits 
a unique 
 equivariant measurable family of absolutely continuous probability measures $\{ \mu ^{\mathcal T}_\omega \}_{\omega \in \Omega}$ such that the Radon-Nikodym derivative of $\mu ^{\mathcal T}_{ \omega}$ is  in the usual Sobolev space $H^{r}(\mathbb T^2)$ of regularity $r$ for $\mathbb P$-almost every $\omega$ and that 
 quenched correlation functions of $(\mathcal T,\sigma )$ for  $\{ \mu ^{\mathcal T}_\omega \}_{\omega \in \Omega}$
 decay  exponentially fast. 
 See also \cite{dyatlov2015stochastic,gossart2020flat} for related results.

Assume that $T$ satisfies the transversality condition, and fix such an $\eta _0>0$ and an $m_0\in \mathbb N$. 
Assume also that $r\geq m_0+1$.
The main result in this section is the following.

\begin{theorem}\label{thm:1121}
  Let $N$ be positive integers such that $N\leq r-m_0$.
  If $\{ \mathcal{T}_\epsilon : \Omega \to \mathcal{O}_{\eta _0}(T)\}_{\epsilon \in [-1,1]}$ is a family of measurable maps such that there is a bounded subset $\mathcal K$ of $\mathcal{C}^N([-1,1],\mathcal{C}^{r+1}(\mathbb T^2,\mathbb T^2))$ satisfying that $\epsilon \mapsto \mathcal{T}_\epsilon(\omega)$ lies in $\mathcal K$  for $\mathbb P$-almost every $\omega \in \Omega$, then the map $\epsilon \mapsto \mu^{\mathcal{T}_\epsilon }_{\omega}$ is in $\mathcal{C}^{N-1}([-1,1], H^{r-N}(\mathbb T^2))$ for $\mathbb P$-almost every $\omega \in \Omega$.
\end{theorem}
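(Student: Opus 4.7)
The plan is to deduce Theorem \ref{thm:1121} from Theorem \ref{thm:quenched_linear_response} in the same manner that Theorem \ref{thm:quenched_response_anosov} was deduced in the previous subsection. The natural choice of Banach spaces is the Sobolev scale
\[
E_j = H^{r-N+j}(\mathbb{T}^2), \qquad j \in \{0, \dots, N\},
\]
so that $E_N = H^r$ is the ``strong'' space in which the equivariant densities are already known to live by \cite[Theorem 1.6]{nakano2015spectra}, while $E_0 = H^{r-N}$ is the ``weak'' space in which we are seeking $\mathcal{C}^{N-1}$ dependence on $\epsilon$. The fixed points $v_\epsilon$ of the lifts produced by Theorem \ref{thm:quenched_linear_response} will be identified, via Proposition \ref{prop:existence_equivariant_measure} together with the absolute continuity statement of \cite[Theorem 1.6]{nakano2015spectra}, with the Radon--Nikodym derivatives of the equivariant measures $\mu^{\mathcal{T}_\epsilon}_\omega$ with respect to $m$.

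First I would verify the structural conditions (S1)--(S5). Conditions (S1) and (S2) are standard: $\mathcal{C}^r(\mathbb{T}^2)$ is dense in each $H^{r-N+j}$ with continuous inclusion, and the usual inclusion $H^s(\mathbb{T}^2) \hookrightarrow (\mathcal{C}^\infty(\mathbb{T}^2))^*$ extends the pairing $h \mapsto (g \mapsto \int gh\,dm)$. Condition (S3) follows from the Sobolev multiplication estimate $\|uf\|_{H^s} \leq C\|u\|_{\mathcal{C}^s}\|f\|_{H^s}$ with $\rho = r-N$, and (S4) is the trivial bound $\|\partial_l f\|_{H^{s-1}} \leq C\|f\|_{H^s}$. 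Condition (S5) reduces to the uniform boundedness of the Perron--Frobenius operators $\mathcal{L}_{\mathcal{T}_{\epsilon,\omega}}$ on each $H^{r-N+j}$, which follows from the explicit form of $\mathcal{L}_T$ for $T \in \mathcal{U}$ and the fact that $\epsilon \mapsto \mathcal{T}_\epsilon(\omega)$ is uniformly bounded in $\mathcal{C}^{r+1}$ by hypothesis. With (S1)--(S5) in hand, Corollary \ref{cor:1211} immediately gives (QR1), (QR4) and (QR5) for the operator cocycles $\{(\mathcal{L}_{\mathcal{T}_\epsilon},\sigma)\}_{\epsilon \in [-1,1]}$ on $\{E_j\}_{j=0}^N$.

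Next I would verify (QR0), (QR2), (QR3). Taking $\xi = m$, the Markov property is automatic because Perron--Frobenius operators preserve total mass, and the $m$-mixing of $(\mathcal{L}_{\mathcal{T}_0},\sigma)$ on both $E_1 = H^{r-N+1}$ and $E_N = H^r$ (i.e.\ the $\omega$-uniform exponential decay of iterates on $\ker m$) is precisely the quenched exponential decay of correlations statement in \cite[Theorem 1.6]{nakano2015spectra}, for which the partial captivity hypothesis on $T$ and the choice of $\eta_0$ and $m_0$ are crucial. (QR2) then follows at once by iterating the uniform one-step bound from (S5). The delicate point is (QR3): one needs an $\omega$-uniform two-norm inequality $\esssup_\omega \|\mathcal{L}_{\mathcal{T}_\epsilon}^{(n)} f\|_{E_1} \leq C\alpha^n \|f\|_{E_1} + CM^n \|f\|_{E_0}$ with $\alpha < M$. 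Because the fibre direction of a U(1) extension is isometric, a standard one-shot Lasota--Yorke estimate on Sobolev spaces is not available, but the essentially uniform Lasota--Yorke estimates underlying the spectral gap of \cite{nakano2015spectra} (obtained after passing to a sufficiently high iterate and exploiting partial captivity) are robust under small perturbations in $\mathcal{O}_{\eta_0}(T)$ and furnish exactly (QR3) for the cocycle. This is where I expect the main obstacle to lie, and my strategy would be to quote the relevant estimates from \cite{nakano2015spectra} essentially verbatim, verifying only that the constants there are uniform across $\mathcal{O}_{\eta_0}(T)$ and hence survive the passage from a single map to a random cocycle taking values in that neighbourhood.

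With all of (QR0)--(QR5) established, Theorem \ref{thm:quenched_linear_response} produces $v_\epsilon \in L^\infty(\Omega, E_1)$ satisfying $\mathcal{L}_{\mathcal{T}_{\epsilon,\omega}} v_\epsilon(\omega) = v_\epsilon(\sigma\omega)$, $\int v_\epsilon(\omega)\, dm = 1$, together with the Taylor expansion \eqref{eq:quenched_linear_response_1} in $E_0 = H^{r-N}(\mathbb{T}^2)$. Applying Proposition \ref{prop:existence_equivariant_measure} (and using uniqueness of the equivariant absolutely continuous family from \cite{nakano2015spectra}) identifies $v_\epsilon(\omega)$ almost surely with the density of $\mu^{\mathcal{T}_\epsilon}_\omega$, and the Taylor expansion then translates directly into the claim that $\epsilon \mapsto \mu^{\mathcal{T}_\epsilon}_\omega$ is $\mathcal{C}^{N-1}$ into $H^{r-N}(\mathbb{T}^2)$ for $\mathbb{P}$-a.e.\ $\omega$, which is the conclusion of Theorem \ref{thm:1121}.
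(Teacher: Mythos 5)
Your proposal follows essentially the same route as the paper: take the Sobolev scale $E_j = H^{r-N+j}(\mathbb T^2)$, verify (S1)--(S5) and hence (QR1), (QR4), (QR5) via Corollary~\ref{cor:1211}, source the remaining cocycle estimates used for (QR0), (QR2), (QR3), and then apply Theorem~\ref{thm:quenched_linear_response} together with Proposition~\ref{prop:existence_equivariant_measure}. The only point worth flagging is that the paper does not re-derive the uniform Lasota--Yorke and mixing bounds from \cite{nakano2015spectra}; it instead records them as Proposition~\ref{prop:1218}, citing \cite[Subsections~2.3 and~2.4]{CNW} for the $\omega$-uniform, high-iterate versions you correctly identify as the delicate ingredient.
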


We recall the  basic properties of the Sobolev spaces $H^m(\mathbb T^2)$ with  regularity $m\in \mathbb N_ 0$.
Recall that $\Vert f\Vert _{H^m} ^2= \sum _{\vert \alpha \vert \leq m} \Vert \partial ^\alpha f \Vert _{L^2}^2$.
\begin{enumerate}
  \item By definition of $\Vert \cdot \Vert _{H^m}$, 
it is straightforward to see that $\Vert \partial _l f \Vert _{H^m} \leq \Vert f\Vert _{H^{m+1}}$ for each $f\in \mathcal C^{m+1}(\mathbb T^2)$ and $1\leq l\leq d$, and $\Vert uf \Vert _{H^m} \leq C\Vert u\Vert _{\mathcal C^{m}} \Vert f\Vert _{H^m}$  for each $u, f \in \mathcal C^{m}(\mathbb T^2)$.
  \item By Kondrachov embedding theorem, $H^{m+1}(\mathbb T^2) \into H^m(\mathbb T^2)$ and the unit ball in $H^{m+1}(\mathbb T^2)$ is relatively compact in $H^m(\mathbb T^2)$.
  \item 
  $\mathcal C^{m'}(\mathbb T^2)$ is dense in $H^{m}(\mathbb T^2)$ for each $m'\geq m$ because $\mathcal C^{m'}(\mathbb T^2)\subset H^{m}(\mathbb T^2)\subset \mathcal C^{m-d/2}(\mathbb T^2)$ by Sobolev embedding theorem.
 \item  By Cauchy-Schwarz inequality, we have $H^m(\mathbb T^2) \into (\mathcal C^0(\mathbb T^2) )^*$ by $h\mapsto \tilde h$ given by $ \tilde h(g) = \intf h g dm$ for $g\in \mathcal C^0(\mathbb T^2)$.
\end{enumerate}

Let  $\lambda _0:=(\inf _{S\in \mathcal O_{\eta _0}} \min _{x\in \mathbb S^1} E_{S}'(x) )^{-1}$, which is less than $1$ by taking $\eta _0$ small if necessary.
 Fix $\lambda  \in ( \lambda _0^{\frac{1}{2}} , 1)$. 
Let $m_0$ be a positive integer such that $\lambda ^{2m_0 +1} >\mathrm{deg} (T)$, where $\mathrm{deg} (T)$ is the degree of $T$.
Let $N\leq r-m_0$ be a positive integer.
By taking $\eta _0$ small if necessary, we assume that $\inf _{S\in \mathcal O_{\eta _0}} \min _{x\in \mathbb S^1} E_{S}'(x)<\lambda $.
 Fix a family of measurable maps $\{ \mathcal{T}_\epsilon : \Omega \to \mathcal{O}_{\eta _0}(T)\}_{\epsilon \in [-1,1]}$  such that there is a bounded subset $\mathcal K$ of $\mathcal{C}^N([-1,1],\mathcal{C}^{r+1}(\mathbb T^2,\mathbb T^2))$ satisfying that $\epsilon \mapsto \mathcal{T}_\epsilon(\omega)$ lies in $\mathcal K$  for $\mathbb P$-almost every $\omega \in \Omega$.
 Let $(\mathcal L_{\mathcal T_\epsilon },\sigma )$ be the Perron-Probenius cocycle induced by $(\mathcal T_\epsilon , \sigma )$.
 Then, it follows from \cite[\S 4]{nakano2015spectra} that
 $\mathcal L_{\mathcal T_\epsilon}$ almost surely extends to a unique, bounded operator on $H^m(\mathbb T^2)$ such that $\omega \mapsto \mathcal L_{\mathcal T_\epsilon}(\omega ) : \Omega \to  L(H^m(\mathbb T^2))$ is strongly measurable for each $\epsilon \in [-1,1]$.
The following estimates were proven in    \cite[Subsections 2.3 and 2.4]{CNW}:
\begin{proposition}\label{prop:1218}
There is a constant $\rho \in (0,1)$ (which may depend on $T$ and $\eta _0$) 
such that for all  $\abs{\epsilon} \le 1$ the following holds: 
\begin{itemize}
\item[$\mathrm{(1)}$] For each $m \geq 0$  and $n\geq 1$,
\[
\esssup_{\omega} \Vert \mathcal L_{\mathcal T_\epsilon }^{(n)} (\omega ) \Vert _{L(H^m(\mathbb T^2))} \le C.
\]
\item[$\mathrm{(2)}$]  For each $m\geq m_0$,
 $n\geq 1$ and $f\in H^m(\mathbb T^2)$,
  $$
  \esssup_{\omega} \Vert \mathcal L_{\mathcal T_\epsilon }^{(n)} (\omega )  f\Vert_{H^{m+1}} 
\le C\lambda  ^{n}\lVert f \rVert_{H^{m+1} }+C\lVert f \rVert_{H^{m}}.
$$
\item[$\mathrm{(3)}$]  For each $m\geq m_0$,
 $n\geq 1$ and $f\in H^m(\mathbb T^2)$ with $\intf _{\mathbb T^2} f dm=0$,
  $$
  \esssup_{\omega} \Vert\mathcal L_{\mathcal T_\epsilon }^{(n)} (\omega )  f\Vert_{H^m} 
\le C\rho  ^{n}\lVert f \rVert_{H^m }.
$$
\end{itemize}
  \end{proposition}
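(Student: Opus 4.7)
The plan is to exploit the skew-product form of maps in $\mathcal U$ by Fourier-decomposing every function in the fibre variable $s$. Writing $f(x,s)=\sum_{k\in\mathbb Z} f_k(x)e^{2\pi iks}$ and using the explicit form of $S \in \mathcal O_{\eta_0}(T)$, a direct computation yields $(\mathcal L_S f)(x,s)=\sum_k e^{2\pi iks}(\mathcal L_{S,k}f_k)(x)$, where the \emph{twisted transfer operator} on $\mathbb S^1$ is
\begin{equation*}
(\mathcal L_{S,k}g)(x) = \sum_{E_S(y)=x} \frac{e^{-ik\tau_S(y)}g(y)}{E_S'(y)}.
\end{equation*}
Consequently the random cocycle factorises over Fourier modes, $(\mathcal L^{(n)}_{\mathcal T_\epsilon}(\omega) f)_k = \mathcal L^{(n)}_{\mathcal T_\epsilon, k}(\omega) f_k$, where $\mathcal L^{(n)}_{\mathcal T_\epsilon, k}(\omega)$ denotes the iterated twisted cocycle on $\mathbb S^1$. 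Using the Parseval-type equivalence $\|f\|_{H^m(\mathbb T^2)}^2 \sim \sum_{k\in\mathbb Z} \sum_{j_1+j_2\le m}|k|^{2j_2}\|\partial_x^{j_1}f_k\|_{L^2(\mathbb S^1)}^2$, it then suffices to prove (1)-(3) for the $k$-indexed family of twisted cocycles with bounds uniform in $k$, $\omega$ and $\epsilon$, and compatible with the $|k|$-weights above.

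For item (1), one argues by induction on $j\le m$ that $\mathcal L^{(n)}_{\mathcal T_\epsilon, k}(\omega)$ is bounded on $H^j(\mathbb S^1)$ with a constant uniform in $n$, $k$, $\omega$ and $\epsilon$. Distributing base derivatives through $\mathcal L_{\mathcal T_\epsilon, k}$ via Leibniz produces terms of the form $\partial_x^{j'}f_k$ with $j'\le j$, each multiplied by at most $j$ factors of $k$ coming from derivatives of the twist $e^{-ik\tau_S}$; these factors are absorbed by the $|k|^{2j_2}$-weights in the 2D Sobolev norm. The remaining estimates are those for the Perron-Frobenius operator of a $\mathcal C^{r+1}$ expanding map, uniform over the compact set $\mathcal O_{\eta_0}(T)$ by Proposition~\ref{prop:cont_strong_op_topology}.

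Item (2) is the Lasota-Yorke inequality. Applied to a single Fourier mode, the classical Lasota-Yorke argument for random $\mathcal C^{r+1}$ expanding maps yields $\|\mathcal L^{(n)}_{\mathcal T_\epsilon, k}(\omega) g\|_{H^{m+1}(\mathbb S^1)} \leq C\lambda^n \|g\|_{H^{m+1}(\mathbb S^1)} + C_k\|g\|_{H^{m}(\mathbb S^1)}$, with the contraction $\lambda$ coming from the uniform expansion rate and the coefficient $\lambda^n$ valid by the choice $\lambda>\lambda_0^{1/2}$. The lower-order constant $C_k$ a priori grows polynomially in $k$ because of the twist, so to re-weight into a genuine Lasota-Yorke inequality in $H^{m+1}(\mathbb T^2)$ one must control this $k$-growth. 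This is the role of the partial captivity hypothesis as formulated in \cite{faure2011semiclassical,nakano2015spectra}: for $m\ge m_0$, the iterated twisted operators contract uniformly for all sufficiently high Fourier modes $|k|$, so the polynomial growth is dominated by the contraction; the remaining low modes are finite in number and contribute only a constant to the lower-order term.

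Item (3), exponential decay on the zero-mean subspace, separates into the zero and nonzero Fourier modes. For $k=0$, $\mathcal L_{\mathcal T_\epsilon,0}(\omega)$ is the ordinary Perron-Frobenius cocycle of the underlying random expanding maps $(E_{\mathcal T_\epsilon(\omega)}, \sigma)$, for which exponential contraction of zero-mean functions in $H^m(\mathbb S^1)$ is classical and uniform under $\mathcal C^{r+1}$-perturbations. For $k\neq 0$, by the choice $\lambda^{2m_0+1}>\mathrm{deg}(T)$ together with uniform partial captivity on $\mathcal O_{\eta_0}(T)$, iterating the Lasota-Yorke bound from item (2) produces uniform contraction on $H^{m_0}(\mathbb S^1)$, which upgrades to $H^m(\mathbb S^1)$ for $m\ge m_0$ by interpolation with item (1). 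The main obstacle is precisely this last step: propagating the deterministic partial captivity estimate uniformly along $\mathbb P$-almost every trajectory of $\sigma$ so that the rate $\rho$ is genuinely independent of $\omega$ and $k$. Following \cite{nakano2015spectra}, this is overcome by noting that the partial captivity condition is $\mathcal C^{r+1}$-open and that, for $\eta_0$ small enough, the relevant quantitative estimates can be chosen uniformly on the whole neighbourhood $\mathcal O_{\eta_0}(T)$. Reassembling the Fourier-mode estimates via Parseval yields the announced bound on the zero-mean subspace of $H^m(\mathbb T^2)$.
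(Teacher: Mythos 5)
The paper itself gives no proof of Proposition~\ref{prop:1218}: it is stated as a consequence of estimates proved in \cite[Subsections 2.3 and 2.4]{CNW}, so there is no ``paper's own proof'' to compare against at the level of detail you provide. Your sketch follows what is essentially the only known route for U(1) extensions --- Fourier decomposition in the fibre variable, reduction to the family of twisted transfer operators $\mathcal L_{S,k}$ on $\mathbb S^1$, classical Lasota--Yorke estimates in the base direction, and the partial captivity condition to obtain decay in the $k$-parameter --- which is indeed the framework of Faure and of Nakano--Wittsten on which \cite{CNW} is built, so the overall approach is the right one.

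That said, a couple of the mechanisms you describe are slightly off. For item (2), you present partial captivity as the device that tames the polynomial $k$-growth of the lower-order constant $C_k$ in the one-dimensional Lasota--Yorke estimate. In fact, that polynomial growth is harmless: when reassembling the 2D $H^{m+1}(\mathbb T^2)$ norm, the $|k|^{j_2}$-weights and the derivative budget $j_1+j_2\le m+1$ absorb every term in which at least one base derivative actually lands on $f_k$, and all the remaining $|k|$-powers land in the weak $H^m(\mathbb T^2)$ norm. The genuine obstruction is the single top-order contribution where no base derivative falls on $f_k$ (schematically the $\sum_k |k|^{2(m+1)}\|\mathcal L_{T,k}^n f_k\|_{L^2(\mathbb S^1)}^2$ piece); for this one there is no gain whatsoever from the expansion in the base, and, without some cancellation from the oscillatory twist, the inequality is simply false --- as one sees by taking $\tau\equiv\mathrm{const}$, which makes the essential spectral radius on $H^{m+1}$ equal to $1$. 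Partial captivity enters precisely to supply low-regularity (effectively $L^2$-level) contraction of $\mathcal L_{T,k}^n$ for $|k|$ beyond a fixed threshold, uniformly over $\mathcal O_{\eta_0}(T)$; the finitely many remaining modes are then dumped into the $H^m$ lower-order term. You should also be careful with the justification of item (1): $\mathcal O_{\eta_0}(T)$ is an open $\mathcal C^{r+1}$-ball and is not compact, and Proposition~\ref{prop:cont_strong_op_topology} speaks to strong-operator-topology continuity on $\mathcal C^r$, not to uniform Sobolev operator bounds; the required uniformity here comes from the elementary distortion/expansion bounds holding with uniform constants on the whole $\eta_0$-neighbourhood, not from a compactness argument. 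Finally, ``upgrades to $H^m$ by interpolation with item (1)'' is too loose: the standard step is to combine the Lasota--Yorke inequality of item (2) with the weak-norm decay from the $k=0$ mean-zero contraction and the high-frequency decay, iterating along trajectories of $\sigma$, rather than an interpolation between operator norms. None of this amounts to a gap that dooms the approach, but the places where partial captivity actually does work for you, and why, need to be stated precisely.
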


We  now prove Theorem \ref{thm:1121}.
Let $m=r-N$.
For $j \in \{0, \dots, N\}$ set $E_{j} = H^{m+j}(\mathbb T^2)$. 
Then, in the same manner as one in the proof of Theorem \ref{thm:quenched_response_anosov},
we can apply  Theorem \ref{thm:quenched_linear_response},  with help of Proposition \ref{prop:measurable_extension}, Proposition \ref{prop:existence_equivariant_measure} and Corollary \ref{cor:1211}, to deduce the claim that $\epsilon \mapsto \mu ^{\mathcal{T}_\epsilon}_\omega$ is in $\mathcal{C}^{N-1}([-1,1], E_0)=\mathcal{C}^{N-1}([-1,1], H^{r-N}(\mathbb T^2))$ for almost every $\omega$, which completes the proof of Theorem \ref{thm:1121}.

\appendix

\section{Proof of Proposition \ref{prop:cont_strong_op_topology}}\label{a:pt}

Let $T\in\mathcal N^{r+1}(M,M)$.
Then it follows from \cite[Corollary 1]{griffiths1963complete} 
 that 
  $T$ is a covering map.
Hence by a basic property of covering spaces, 
there is a discrete topological space $\Gamma$ such that 
for every $x \in M$  there is a neighborhood $U_x$ of $x$ such that $T ^{-1}(\{ x\})$ is homeomorphic to $\Gamma$ and   $T ^{-1}(U_x)$ is homeomorphic to $U_x \times  \Gamma$. 
In other words,  $T^{-1}(U_x)$ is a union of disjoint open sets $\{ \widetilde U_{b , x}\} _{j=1}^{B}$ such that  
  $T: \widetilde U_{b,x}\to U_x$ is a homeomorphism for each $b=1, \ldots ,B$, where $B$ is the cardinality of  $\Gamma$.
If $B= \infty$, then 
since   $\vert \det DT\vert $ is bounded uniformly away from $0$ due to the compactness of $M$,  
we have
\[
m (T^{-1}(U_x)) \geq B \cdot \inf _{y\in M}\vert \det DT(y)\vert \,  m(U_x)  = \infty, 
\]
which contradicts to that $m$ is a finite measure.
Hence $B< \infty$.
Furthermore, there is a small neighborhood $\mathcal U $ of $T$ in $\mathcal N^{r+1}(M,M)$  such that for each $S\in \mathcal U$ and  $x\in M$, there are disjoint open sets $ \{ \widetilde U_{b, x }^S\} _{ b=1} ^{ B}$ such that $S : \widetilde U_{b, x}^S \to U _x$ is a $\mathcal C^{r+1}$ diffeomorphism for each $b$ and  that for each $y\in U_x$,
\begin{equation}\label{eq:gx4}
d_{M}\left( \left(S\vert _{\widetilde U_{b, x }^S} \right) ^{-1}(y) ,   \left(T\vert _{\widetilde U_{b, x }} \right) ^{-1}(y) \right) \to 0
\end{equation}
 as $S\to T$ in $\mathcal N^{r+1}(M,M)$.

Since  $M$ is compact, there are a finite subfamily  $\{U _\lambda \}_{ \lambda \in \Lambda }$  (with $\vert \Lambda \vert <\infty$) of the  open covering $\{ U_x \} _{x\in M}$ of $M$.
For each $\lambda \in \Lambda$ there are disjoint open sets $
\{ \widetilde U_{b, \lambda }\} _{ b=1} ^B$ such that 
$T : \widetilde U_{b, \lambda } \to U _\lambda$  is a $\mathcal C^{r+1}$ diffeomorphism  for each $b=1, \ldots , B$.
Notice that for each $\lambda \in \Lambda$, $x\in U_\lambda$ and a complex-valued function $f$ on $M$, it holds that
\begin{equation}\label{eq:gx2}
\sum _{T(y) =x} f(y) = \sum _{b=1}^{B} f \circ \left( T \vert _{\widetilde U_{b, \lambda}}\right) ^{-1}(x). 
\end{equation}
Let $\{ K_\lambda \} _{\lambda \in \Lambda}$ be a closed covering of $M$ such that $K_\lambda \subset U_\lambda$, and
$\{\rho _\lambda \}_{\lambda \in \Lambda}$   a partition of unity of $M$ subordinate to the covering $\{ K _\lambda \}_{ \lambda \in \Lambda  }$
 (that is,    $\rho _\lambda $ is a $\mathcal C ^\infty$ function on $M$ with values in $[0,1] \subset \R $ such that  the support of $\rho _\lambda$   is contained in $K _\lambda $ for each $\lambda \in \Lambda$ and 
$
\sum _{\lambda \in \Lambda } \rho _\lambda   (x) =1$ for each $ x\in M$).
Then, in view of \eqref{eq:gx2} we get that for each $f\in \mathcal C^r(M)$ and $x\in M$,
\[
 \mathcal L_T f (x)= \sum _{\lambda \in \Lambda } \rho _\lambda (x) \cdot \sum _{T(y)=x}\frac{f(y)}{\vert \det DT(y)\vert }
 = \sum _{\lambda \in \Lambda }  \sum _{b=1}^B \rho _\lambda (x) \cdot\frac{f}{\vert \det DT\vert }\circ \left( T \vert _{\widetilde U_{b, \lambda}}\right) ^{-1}(x)
\]
and 
for each $S\in \mathcal U$, 
\begin{align*}
\left\Vert \mathcal L_T f - \mathcal L_S f\right\Vert _{\mathcal C^r} 
\leq \sum _{\lambda \in \Lambda } \sum _{b=1}^{B} \Vert \rho _\lambda \Vert _{\mathcal C^r} \left\Vert  \frac{f}{\vert \det DT\vert }  \circ \left( T \vert _{\widetilde U_{b, \lambda}}\right) ^{-1} - \frac{f}{\vert \det DS\vert }  \circ \left( S \vert _{\widetilde U_{b, \lambda}^S}\right) ^{-1}  \right\Vert _{\mathcal C^r (K_\lambda )}.
\end{align*}
Therefore, since both $\vert \Lambda \vert$ and $B$ are finite and $  \left\Vert  \vert \det DT\vert ^{-1}  -  \vert \det DS\vert^{-1} \right\Vert _{\mathcal C^r } \to 0$ as $S\to T$  in $\mathcal N^{r+1}(M,M)$, 
 it suffices to show that for each $\lambda \in \Lambda $,  $b=1,\ldots , B$ and $f\in \mathcal C^r(M)$,
\begin{equation}\label{eq:gx3}
 \left\Vert f \circ \left( T \vert _{\widetilde U_{b, \lambda}}\right) ^{-1} - f  \circ \left( S \vert _{\widetilde U_{b, \lambda}^S}\right) ^{-1}  \right\Vert _{\mathcal C^r (K_\lambda )} \to 0 \quad \text{as $S\to T$  in $\mathcal N^{r+1}(M,M)$}.
\end{equation}

Fix $\lambda \in \Lambda $,  $b=1,\ldots , B$ and $f\in \mathcal C^r(M)$. 
By taking $K_\lambda$ small if necessary, we can assume that $K_\lambda$ is included in a local chart of $M$, so we assume that $K_\lambda$ is a closed subset of $\mathbb R^d$ with the dimension $d$ of $M$.
We use the notations $\partial _i(\cdot )$, $\partial ^\alpha (\cdot )$ and $\mathrm{adj}(\cdot )$ given in the proof of Proposition \ref{prop:1209a}.  
Recall that  $ T \vert _{\widetilde U_{b, \lambda}} :  \widetilde U_{b, \lambda} \to  U_{ \lambda}$ is a 
$\mathcal C^{r+1}$ diffeomorphism, so by the inverse function theorem and the fact that $A^{-1} =(\det A )^{-1} \mathrm{adj} (A)$ for any invertible matrix $A$, we have
\[
D\left(\left(T \vert _{\widetilde U_{b, \lambda}} \right) ^{-1}\right)(x) =  D T(y)
 = \left(\det DT(y)\right)^{-1} \mathrm{adj} (DT(y))
\]
with $y=\left(T \vert _{\widetilde U_{b, \lambda}} \right) ^{-1} (x)$ for any $x\in U_{ \lambda}$. 
Since each entry of $\mathrm{adj} (DT(y))$ (the transpose of the cofactor matrix of $DT(y)$) is a polynomial of $\partial _i T$ ($1\leq i\leq d$),
by  the chain rule for derivatives 
we conclude that for each $i=1, \ldots , d$,
\[
\partial _i \left(f\circ  \left(T \vert _{\widetilde U_{b, \lambda}} \right) ^{-1} \right) = \sum _{l=1}^d \left(J_{l} \cdot \partial _l f\right) 
\circ  \left(T \vert _{\widetilde U_{b, \lambda}} \right) ^{-1}\quad \text{on $U_\lambda$,}
\]
where $J_l$ is   a polynomial function of  $\partial _i  T_j$ ($1\leq i,j \leq d$) and $( \det DT)^{-1}$.
Applying this formula repeatedly, we get that  for each 
  multi-index $\alpha $ with $\vert \alpha\vert \leq r$, 
\[
\partial ^\alpha  \left(f\circ  \left(T \vert _{\widetilde U_{b, \lambda}} \right) ^{-1} \right) = \sum _{\vert \beta \vert \leq \vert \alpha \vert} \left(J_{\alpha , \beta} \cdot \partial ^\beta f\right) 
\circ  \left(T \vert _{\widetilde U_{b, \lambda}} \right) ^{-1}\quad \text{on $U_\lambda$,}
\]
where $ J_{\alpha ,\beta } = J_{\alpha,\beta}^T$  is a polynomial function of  $\partial ^\gamma   T_j$ ($1\leq j \leq d$,  $\vert \gamma \vert \leq \vert \beta \vert$) and $( \det DT)^{-1}$.
Now \eqref{eq:gx3} immediately follows from \eqref{eq:gx4}, and this completes the proof.

\section*{Acknowledgments}
Y.N. was partially supported by JSPS KAKENHI Grant Numbers 19K14575 and 19K21834.  H.C. is supported by an Australian Government Research Training Program Scholarship,  a UNSW Science PhD Writing Scholarship, and by the UNSW School of Mathematics and Statistics.

\bibliographystyle{siam}
\bibliography{bibliography}

\end{document}